\newcommand{\DesiredState}{y_{\operatorname{d}}}
\definecolor{Cn}{HTML}{000000}
\definecolor{Mn}{HTML}{000000}
\definecolor{C}{HTML}{000000}
\definecolor{M1}{HTML}{000000}
\definecolor{M2}{HTML}{000000}
\DeclareMathOperator{\sign}{sgn}
\DeclareMathOperator{\supp}{supp}
\newtheorem{assumption}{Assumption}
\theoremstyle{plain}
\newtheorem{theorem}[assumption]{Theorem}
\theoremstyle{plain}
\newtheorem{remark}[assumption]{Remark}
\theoremstyle{plain}
\newtheorem{definition}[assumption]{Definition}
\theoremstyle{plain}
\newtheorem{lemma}[assumption]{Lemma}
\theoremstyle{plain}
\theoremstyle{plain}
\theoremstyle{plain}
\newcommand{\qand}{\quad\text{and}\quad}
\newcommand{\TimeNode}{e}
\newcommand{\SpaceNode}{e}
\newcommand{\TimeDelta}{\updelta}
\newcommand{\SpaceDelta}{\updelta}
\newcommand{\stiff}{A_h} 
\newcommand{\dualfunction}{K}
\newcommand{\Morrey}{\varPhi}
\newcommand{\adjheat}{L} 
\DeclareMathOperator{\conv}{conv}
\newcommand{\converges}[1]{\stackrel{#1}{\longrightarrow}}
\newcommand{\cC}{{\mathcal{C}}}
\newcommand{\cI}{{\mathcal{I}}}
\newcommand{\cK}{{\mathcal{K}}}
\newcommand{\cL}{{\mathcal{L}}}
\newcommand{\cM}{{\mathcal{M}}}
\newcommand{\cR}{{\mathcal{R}}}
\newcommand{\cU}{{\mathcal{U}}}
\newcommand{\cV}{{\mathcal{V}}}
\newcommand{\cW}{{\mathcal{W}}}
\newcommand{\cY}{{\mathcal{Y}}}
\newcommand{\dd}{{\on{d}}}
\newcommand{\Span}{{\on{span}}}
\newcommand{\pd}{\partial}
\newcommand{\ceq}{\coloneqq}
\newcommand{\qec}{\eqqcolon}
\newcommand{\R}{{\mathbb{R}}}
\newcommand{\N}{\mathbb{N}}
\newcommand{\abs}[1]{\left\lvert#1\right\rvert} 
\newcommand{\nabs}[1]{\lvert{#1}\rvert} 
\newcommand{\norm}[1]{\left\lVert#1\right\rVert}
\newcommand{\nnorm}[1]{\lVert{#1}\rVert}
\newcommand{\ninnerprod}[1]{\langle{#1}\rangle}
\newcommand{\nparen}[1]{(#1)}
\newcommand{\bigparen}[1]{\big(#1\big)}
\newcommand{\Bigparen}[1]{\Big(#1\Big)}
\newcommand{\intervaloo}[1]{\left(#1\right)}
\newcommand{\intervaloc}[1]{\left(#1\right]}
\newcommand{\Set}[1]{\left\{#1\right\}}
\newcommand{\set}[1]{\{#1\}}
\newcommand{\on}[1]{\operatorname{#1}}
\DeclareMathOperator{\diam}{diam}  
\title{Maximal discrete sparsity in parabolic optimal control with measures}
\author{Evelyn Herberg\footnote{Mathematisches Institut, Universität Koblenz-Landau, Campus Koblenz, Universitätsstraße 1, 56072 Koblenz, Germany.}$\,$ , Michael Hinze$^*$, Henrik Schumacher\footnote{Institut für Mathematik, RWTH Aachen University, Templergraben 55, 52062 Aachen, Germany.}}
\date{November 19, 2019}
\begin{document}

\maketitle

\textbf{Abstract.} We consider variational discretization \cite{VD} of a parabolic optimal control problem governed by space-time measure controls.
For the state discretization we use a Petrov-Galerkin method employing piecewise constant states and piecewise linear and continuous test functions in time. For the space discretization we use piecewise linear and continuous functions. As a result the controls are composed of Dirac measures in space-time, centered at points on the discrete space-time grid. 
We prove that the optimal discrete states and controls converge strongly in $L^q$ and weakly-$*$ in $\cM$, respectively, to their smooth counterparts\textcolor{Cn}{, where $q \in (1,\min\{2,1+2/d\}]$ is the spatial dimension}.
Furthermore, we compare our approach to \cite{CasKun}, where the corresponding control problem is discretized employing a discontinuous Galerkin method for the state discretization and where the discrete controls are piecewise constant in time and Dirac measures in space. Numerical experiments highlight the features of our discrete approach.\\

\textbf{AMS subject classifications.} 49J20, 49M25, 49M29, 65K10


\section{Introduction}  
\label{sec:Intro}
We consider the continuous minimization problem
\begin{equation}
\min_{(u_0,u)\in \mathcal{M}(\bar{\varOmega}_c)\times\mathcal{M}(\bar{Q}_c) }
	J(u_0,u) 
	\coloneqq \tfrac{1}{q} \nnorm{y-\DesiredState}_{L^q(Q)}^{q} 
	+ \alpha \, \nnorm{u}_{\mathcal{M}(\bar{Q}_c)}
	+ \beta \, \nnorm{u_0}_{\mathcal{M}(\bar{\varOmega}_c)},
\tag{$P$}
\label{eq:P}
\end{equation}
where the \emph{state} $y \in L^q(Q)$ solves the following parabolic 
\emph{state equation}
\begin{equation}
\begin{cases}
\partial_t y - \Delta y &=u \qquad\;\;\; \text{in} 
\; Q = \varOmega \times (0,T),\\
y(x,0) &=u_0 \qquad\;\, \text{in} \; \varOmega, \\
y(x,t) &= 0 \qquad\;\;\; \text{on} \; \varSigma = \varGamma \times (0,T),
\end{cases}
\label{eq:PDE}
\end{equation}
with real, regular Borel measures \textcolor{Cn}{$u \in \cM(\bar{Q}_c)$ and $u_0 \in \cM(\bar{\varOmega}_c)$.}
Here $\varOmega \subset \mathbb{R}^d$
is an open, bounded domain with boundary $\varGamma \coloneqq \partial \varOmega$ of  regularity to be discussed later. 
We fix an open, relatively compact interval $I_c \subset \subset I \ceq (0,T)$ 
and a relatively compact subdomain $\varOmega_c \subset\subset \varOmega$
and define the space-time control domain $Q_c \ceq \varOmega_c \times I_c$.
By the Riesz representation theorem (see \cite[Theorem 6.19.]{Rudin}), we may identify the space of regular $\cM(X)$ Borel measures on a subset $X \subset \R^{d+1}$ with the dual space of $\cC_0(X)$, the closure of the space of continuous, compactly supported functions in the supremum norm.
In particular, we have
\begin{align*}
	\mathcal{M}(\bar{\varOmega}_c) = \cC(\bar{\varOmega}_c)^*,
	\quad
	\mathcal{M}(\bar{Q}_c) = \cC(\bar{Q}_c)^*,
	\quad
	\cM(\varOmega) \coloneqq \cC_0 (\varOmega)^*	
	\quad
	\text{and}
	\quad
	\cM(Q) \coloneqq \cC_0 (Q)^*	
	.
\end{align*}
Moreover, the total variation norm for measures coincides with the dual norm:
\begin{equation*}
	\textstyle
	\nnorm{u_0}_{\mathcal{M}(\bar{\varOmega}_c)}
	=
	\sup_{\nnorm{f}_{\cC(\bar{\varOmega}_c)}\leq 1 } \int_{\bar{\varOmega}_c}{f\,\dd u_0}
	\quad 
	\textrm{and} 
	\quad
	\nnorm{u}_{\mathcal{M}(\bar{Q}_c)}
	=
	\sup_{\nnorm{f}_{\cC(\bar{Q}_c)}\leq 1 } \int_{\bar{Q}_c}{f \,\dd u}
	.	
\end{equation*}
Where appropriate, we identify $\cM(\bar{Q}_c)$ with the space $\{ u \in 
\cM(Q) : \supp(u) \subseteq \bar{Q}_c  \}$, and accordingly 
$\cM(\bar{\varOmega}_c)$ with $\{u_0 \in \cM(\varOmega): \supp(u_0) 
\subseteq \bar \varOmega_c\}$. Furthermore, $\alpha>0$, $\beta>0$ are 
given penalty parameters.

The state $y$ is supposed to solve \eqref{eq:PDE} in the following very weak sense, \textcolor{Cn}{equivalent to \cite[Definition 2.1.]{CasKun}} :
\begin{definition}\label{definition1.2}
	A function $y\in L^q(Q)$ is a solution to \eqref{eq:PDE}, if the 
	identity
	\begin{equation}
	\int_{Q}{- \left(\partial_t w + \Delta w \right)y 
		\;dx\,dt} =  \int_{\bar{Q}_c}{w \, du} + \int_{\bar{\varOmega}_c}{w(0) 
		\, du_0} 
	\label{eq:2.1}
	\end{equation}
	holds for all $w \in \cC^\infty( \bar Q)$ with $w(T) = 0$ and $w|_{\varSigma} =0$.
\end{definition}
The solvability of \eqref{eq:2.1} and of problem \eqref{eq:P} have already been established in
\cite[Theorem 2.2.]{CasKun} and \cite[Theorem 2.7.]{CasKun}.
We will also discuss this matter in greater detail later in Section \ref{sec:contoptsys}.
For the moment, we just state that both \eqref{eq:2.1} and \eqref{eq:P} are well-posed with unique solutions provided that (i) $\varOmega$ is a sufficiently regular (e.g. $\varOmega$ is of class $\cC^{1,1}$) and that (ii) \textcolor{Mn}{$q \in (1,\min\{2, 1 +2/d\}]$.}

\bigskip

Optimal control with total variation norm of measures has several applications since its solutions admit a sparsity structure (see, e.g., \cite{Gong, GongH, Stadler}). For example, the problem of identifying the location of a pollution source or the time instance of a pollution based on measurements can be formulated as an optimal control problem with measures. The sparsity structure of the controls here allows for a precise prediction of the pollution source or the time instance at which pollution takes place. 
For the practical implementation, we propose a discrete concept which delivers discrete controls with a maximal sparsity structure, i.e., variational discretization from \cite{VD}, which allows to \textit{control} the discrete structure of the controls through the choice of Petrov-Galerkin ansatz and test spaces in the discretization of the state equation.
The problem can also be discretized by a full discretization approach as is proposed in, e.g., \cite{CasKun}, where piecewise constant controls in time and Dirac measures in space are used. This limits the maximal possible sparsity in this setting to controls which are constant on time intervals. 

For the variational discretization we obtain analogous convergence results as reported in \cite[Theorem 4.3.]{CasKun}. More precisely, we will prove Theorem \ref{thm:DiscVarConvergence}, where \eqref{eq:Pvd} denotes the variational discrete version of the problem \eqref{eq:P}. We denote the implicitly discrete control space $U_h \times \mathcal{U}_{\operatorname{vd}}$, in which \eqref{eq:Pvd} has a unique solution (see Theorem~\ref{thm:exandproj}), and the discretization parameter $\sigma \coloneqq (\tau, h)$, where $\tau$ indicates time and $h$ indicates space (see Section \ref{sec:vardiscr} for more details on the notation).
Our main result reads as follows:

\begin{theorem}\label{thm:DiscVarConvergence}
	Let $\varOmega \subset \R^d$ be a bounded open domain of class $\cC^{1,1}$ and let $q \in (1,2]$ satisfy $q < 1 + 2/d$. 
	For fixed $\sigma$, let $\nparen{ \bar{u}_{0,h},\bar{u}_{\sigma}}$ be the unique solution of problem \eqref{eq:Pvd} that belongs to $U_h \times \mathcal{U}_{\sigma} $,
	and denote the associated state by $\bar{y}_{\sigma}$. 
	
	Then for each sequence of discretizations with $\nabs{\sigma} \to 0$, we have the following convergence properties: 
	\begin{align}
		& \bar{y}_{\sigma} \to \bar y \quad \text{in $L^q(Q)$}, \label{eq:conv1}\\
		& 
		\bar{u}_{\sigma} \stackrel{*}{\rightharpoonup} \bar u
		\quad \text{in $\mathcal{M}(\bar{Q}_c)$}
		\quad
		\text{and}
		\quad
		\bar{u}_{0,h} \stackrel{*}{\rightharpoonup} \bar u_0
		\quad \text{in $\mathcal{M}(\bar{\varOmega}_c)$},
		\label{eq:conv2}
		\\
		&
		\nnorm{\bar{u}_{\sigma}}_{\mathcal{M}(\bar{Q}_c)} 
		\to
		\nnorm{\bar{u}}_{\mathcal{M}(\bar{Q}_c)} 	
				\quad
		\text{and}
		\quad
		\nnorm{\bar{u}_{0,h}}_{\mathcal{M}(\bar{\varOmega}_c)} 
		\to
		\nnorm{\bar{u}_0}_{\mathcal{M}(\bar{\varOmega}_c)},
		 \label{eq:conv3}
	\end{align}	
	where $(\bar{u}_0,\bar{u})$ is the unique solution of \eqref{eq:P} and $\bar{y}$ its associated state.
\end{theorem}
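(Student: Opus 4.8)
The plan is to follow the classical scheme for convergence of discretizations of convex optimal control problems: obtain $\sigma$-uniform bounds on the discrete minimizers, pass to a weak-$*$ cluster point, identify it with the unique minimizer of \eqref{eq:P} by a lower-semicontinuity and recovery sandwich, and finally bootstrap the weak convergences into the asserted ones. Write $J_\sigma$ for the objective of \eqref{eq:Pvd} and $S_\sigma\colon\cM(\bar\varOmega_c)\times\cM(\bar Q_c)\to L^q(Q)$ for the discrete solution operator, so $\bar y_\sigma=S_\sigma(\bar u_{0,h},\bar u_\sigma)$ and $J_\sigma(u_0,u)=\tfrac1q\nnorm{S_\sigma(u_0,u)-\DesiredState}_{L^q(Q)}^q+\alpha\nnorm{u}_{\cM(\bar Q_c)}+\beta\nnorm{u_0}_{\cM(\bar\varOmega_c)}$. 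First I would test optimality of $(\bar u_{0,h},\bar u_\sigma)$ against the admissible pair $(0,0)$, whose discrete state vanishes, obtaining $\alpha\nnorm{\bar u_\sigma}_{\cM(\bar Q_c)}+\beta\nnorm{\bar u_{0,h}}_{\cM(\bar\varOmega_c)}\le J_\sigma(0,0)=\tfrac1q\nnorm{\DesiredState}_{L^q(Q)}^q$, so the discrete controls are bounded in $\cM$ uniformly in $\sigma$; together with a $\sigma$-uniform stability estimate $\nnorm{S_\sigma(u_0,u)}_{L^q(Q)}\le C\paren{\nnorm{u}_{\cM(\bar Q_c)}+\nnorm{u_0}_{\cM(\bar\varOmega_c)}}$ this also bounds $\bar y_\sigma$ in $L^q(Q)$. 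Since $\bar\varOmega_c,\bar Q_c$ are compact metric spaces, $\cM(\bar\varOmega_c),\cM(\bar Q_c)$ are separable duals, so after passing to a subsequence (not relabeled) $\bar u_\sigma\stackrel{*}{\rightharpoonup}\tilde u$, $\bar u_{0,h}\stackrel{*}{\rightharpoonup}\tilde u_0$, and---refining once more and using reflexivity of $L^q(Q)$---$\bar y_\sigma\rightharpoonup\tilde y$ in $L^q(Q)$.

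Next I would show that $\tilde y=S(\tilde u_0,\tilde u)$ in the very weak sense of Definition \ref{definition1.2}. Fix $w\in\cC^\infty(\bar Q)$ with $w(T)=0$ and $w|_\varSigma=0$ and let $w_\sigma$ be its Petrov--Galerkin approximation in the test space (dual-linear in time, linear in space). Testing the discrete state equation against $w_\sigma$ yields an identity relating $\bar y_\sigma$, the discretized adjoint data built from $w$, and the controls paired with $w_\sigma$; since $w_\sigma\to w$ uniformly on $\bar Q$ and $w_\sigma(0)\to w(0)$ uniformly on $\bar\varOmega$, the discretized adjoint data converges strongly in $L^{q'}(Q)$ to $-(\partial_t w+\Delta w)$ (consistency of the scheme), while $\bar y_\sigma\rightharpoonup\tilde y$ in $L^q(Q)$ handles the left-hand side and weak-$*$ convergence of the measures against the uniformly convergent $w_\sigma$, $w_\sigma(0)$ handles the right-hand side. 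Passing to the limit therefore recovers \eqref{eq:2.1} for $(\tilde u_0,\tilde u)$, so $(\tilde u_0,\tilde u)$ is admissible for \eqref{eq:P} with state $\tilde y$. This step---the $\sigma$-uniform $\cM\to L^q$ stability and the $L^{q'}$-consistency of the Petrov--Galerkin discretization of the backward heat equation---is the main obstacle, and it is exactly where the hypothesis $q<1+2/d$ (equivalently $q'>(d+2)/2$, the parabolic Sobolev embedding into $\cC(\bar Q)$) is indispensable.

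To identify the cluster point I would sandwich the optimal values. The tracking term of $J$ is weakly lower semicontinuous on $L^q(Q)$ and the total-variation terms are weak-$*$ lower semicontinuous, so $\liminf_\sigma J_\sigma(\bar u_{0,h},\bar u_\sigma)\ge J(\tilde u_0,\tilde u)$. Conversely, the continuous minimizer $(\bar u_0,\bar u)$ of \eqref{eq:P} is admissible for \eqref{eq:Pvd} (variational discretization leaves the feasible set unchanged), and one has $S_\sigma(\bar u_0,\bar u)\to\bar y$ strongly in $L^q(Q)$---classical for smooth data, and for measure data a consequence of this convergence combined with the $\sigma$-uniform stability and a density argument in $\cM$---whence $J_\sigma(\bar u_0,\bar u)\to J(\bar u_0,\bar u)$. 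Optimality of $(\bar u_{0,h},\bar u_\sigma)$ for \eqref{eq:Pvd} and of $(\bar u_0,\bar u)$ for \eqref{eq:P} then give
\begin{align*}
J(\tilde u_0,\tilde u)
&\le\liminf_\sigma J_\sigma(\bar u_{0,h},\bar u_\sigma)
\\
&\le\limsup_\sigma J_\sigma(\bar u_{0,h},\bar u_\sigma)
\le\lim_\sigma J_\sigma(\bar u_0,\bar u)
\\
&=J(\bar u_0,\bar u)\le J(\tilde u_0,\tilde u),
\end{align*}
so all inequalities are equalities; hence $(\tilde u_0,\tilde u)$ minimizes \eqref{eq:P}, and by uniqueness $(\tilde u_0,\tilde u)=(\bar u_0,\bar u)$, $\tilde y=\bar y$, and $J_\sigma(\bar u_{0,h},\bar u_\sigma)\to J(\bar u_0,\bar u)$. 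In particular \eqref{eq:conv2} holds along the subsequence.

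Finally I would upgrade the convergences. Each of the three nonnegative terms of $J_\sigma(\bar u_{0,h},\bar u_\sigma)$ is lower semicontinuous along the sequence and their sum converges to the sum of the corresponding terms of $J(\bar u_0,\bar u)$; hence each term converges individually, which is \eqref{eq:conv3} and also gives $\nnorm{\bar y_\sigma-\DesiredState}_{L^q(Q)}\to\nnorm{\bar y-\DesiredState}_{L^q(Q)}$. Combined with $\bar y_\sigma-\DesiredState\rightharpoonup\bar y-\DesiredState$ in the uniformly convex space $L^q(Q)$, $1<q<\infty$, the Radon--Riesz property gives strong convergence $\bar y_\sigma\to\bar y$ in $L^q(Q)$, i.e. \eqref{eq:conv1}. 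Since the limit triple $(\bar u_0,\bar u,\bar y)$ is independent of the extracted subsequence, a routine subsequence-of-subsequences argument promotes \eqref{eq:conv1}--\eqref{eq:conv3} from the subsequence to the full sequence, completing the proof.
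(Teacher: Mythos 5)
Your overall skeleton (uniform bounds from testing against $(0,0)$, weak-$*$ extraction, identification of the limit state equation, a sandwich of optimal values, term-by-term convergence of the objective, Radon--Riesz, and the subsequence principle) matches the paper. The genuine gap is in your recovery step. You close the sandwich by asserting $S_\sigma(\bar u_0,\bar u)\to\bar y$ strongly in $L^q(Q)$ for the \emph{measure-valued} minimizer, justified by a ``$\sigma$-uniform stability estimate'' $\nnorm{S_\sigma(u_0,u)}_{L^q}\leq C(\nnorm{u}_{\cM}+\nnorm{u_0}_{\cM})$ plus ``a density argument in $\cM$''. Neither ingredient is available. The uniform discrete $\cM\to L^q$ stability bound is nowhere established and is itself a substantial result (a discrete analogue of the continuous regularity theory, delicate for the Crank--Nicolson-type Petrov--Galerkin scheme used here); and even granting it, the density argument cannot run: smooth functions are only weak-$*$ dense in $\cM(\bar Q_c)$, not norm-dense (e.g.\ $\nnorm{\dirac_{x_0}-\rho_\epsi}_{\cM}=2$ for any mollification), so a uniform operator-norm bound cannot transfer $S_\sigma f\to Sf$ from smooth $f$ to a singular measure. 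Since the tracking term is only weakly lower semicontinuous, weak convergence of $S_\sigma(\bar u_0,\bar u)$ would not give the needed $\limsup_\sigma J_\sigma(\bar u_0,\bar u)\le J(\bar u_0,\bar u)$ either, so the inequality chain breaks exactly there.

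The paper circumvents this by never inserting the exact minimizer $(\bar u_0,\bar u)$ into $J_\sigma$. Instead it takes a minimizing sequence of \emph{smooth} controls $(u_{0,m},u_m)=(f_{0,m}\,\dd x,\,f_m\,\dd x\otimes\dd t)$ compactly supported in the control region (weak-$*$ density suffices because only the values $J(u_{0,m},u_m)$ need to approach the infimum), uses maximal parabolic regularity to get $y_m\in W^{2,1}_r(Q)$ so that the finite element states converge for each fixed $m$, selects a diagonal subsequence $\sigma_{n_{k_m}}$ with discretization error at most $1/m$, and combines this with the projection operators $\varUpsilon_h,\varUpsilon_\sigma$ (properties \eqref{eq:4.5}, \eqref{eq:4.10}, \eqref{eq:ygleich}) and weak lower semicontinuity of $J$ to obtain $J(\tilde u_0,\tilde u)\le J(\bar u_0,\bar u)$ directly. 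You should adopt this (or prove the uniform stability and a genuine approximation-in-energy argument, which is far more work). A smaller gloss in your Step I: since $\bar y_\sigma$ only converges weakly in $L^q$ and $w_\sigma$ is piecewise linear in space, $\Delta w_\sigma$ is not an $L^{q'}$ function, so ``consistency of the scheme'' is not automatic; the paper needs the specific choice of $w_\sigma=\varphi_h\otimes\psi_\tau$ with $\varphi_h$ the Ritz projection (and the $L^\infty$ Ritz estimate) to rewrite $\int_Q\nabla\bar y_\sigma\nabla(\varphi_h\otimes\psi_\tau)=-\int_Q\bar y_\sigma(\Delta\varphi\otimes\psi_\tau)$, together with density of tensor products in $W$. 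That step is fixable along your lines but must be spelled out.
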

The proof is given on page \pageref{convergenceproof}.

%
Let us briefly comment on related contributions in the literature. 
In \cite{sparseFEM,duality,PV}, control of elliptic partial differential equations with measures is considered and control of parabolic partial differential equations with measures can be found in \cite{CasasClasonKunisch,CasKun,CVZ,KPV}. 
In \cite{CVZ}, the special case of initial data is covered and in \cite{KPV,PV} a priori error estimates are derived. Of particular interest for our approach are the techniques proposed in \cite{duality}, where Fenchel duality is used to set up a predual problem. 
In this context we also consulted the lecture notes by Clason \cite{Clasonlecturenotes}, where Fenchel duality is discussed in detail. 
The variational discrete concept that we use has been introduced in \cite{VD} and the time discrete-scheme in a variational discrete setting has been analyzed in \cite{DHV,Wollner}.

The paper is organized as follows: In Section \ref{sec:contoptsys} we analyze the continuous problem \eqref{eq:P} and its sparsity structure. 
We will then set up the predual problem, show that it has a unique solution and apply the Fenchel duality theorem. The predual problem will be discretized with two different strategies. 
The first one is by variational discretization., We discuss it in Section \ref{sec:vardiscr}, where we derive also a semi-smooth Newton method to solve the variational discrete problem \eqref{eq:Pvd}. The second strategy is a discontinuous Galerkin discretization (see Section \ref{sec:discgalerkin}). The emerging fully discrete problem \eqref{eq:Psigma} is solved analogously to \eqref{eq:Pvd}. Computational results of both approaches are compared in Section \ref{sec:compres}. 
\section{Continuous optimality system}
\label{sec:contoptsys}

In this section, we take a closer look at the solution structure of \eqref{eq:P}. 

\subsection{State Equation}

First, we have to discuss solvability of the state equation \eqref{eq:PDE},
to be interpreted in the form \eqref{eq:2.1}.
To this end, for an arbitrary open domain $\varOmega' \subset \R^d$, we introduce the following anisotropic Sobolev spaces
\begin{align*}
	W^{k,1}_r(\varOmega' \times I)
	\ceq
	\Big\{ w \in L^r(\varOmega' \times I) : \partial_t w,\; \partial_x^1 w, \dotsc, \partial_x^k w \in L^r(\varOmega'\times I)  \Big\},
	\quad
	\text{$k \in \N_0$ and $r \in (1, \infty)$.}
\end{align*}
and define the space
\begin{align*}
	W 
	\ceq
	\Set{ 
		w \in W^{1,1}_2(Q)
		:
		\text{$w |_{\varSigma} =0$, $w(T) = 0$ and $-(\pd_t + \Delta)w \in L^p(Q)$}
	},
\end{align*}
where  $p = (1 - 1/q)^{-1} \in (2,\infty)$ is the Hölder conjugate of $q$.
Because of $L^p(\varOmega) \subset L^2(\varOmega)$, 
the existence and uniqueness theory for weak solutions of parabolic partial differential equations (see e.g.,
\cite[Chapter 7]{MR1625845}) implies that the operator
\begin{align*}
	L \ceq -(\pd_t + \Delta) \colon W \to L^p(Q)
\end{align*}
is an isomorphism of vector spaces. Equipped with the norm
$\nnorm{w}_W \ceq \nnorm{L \,w}_{L^p(Q)}$,
$W$ is a reflexive Banach space and $L$ is an isomorphism of Banach spaces.
The heat operator $\partial_t - \Delta$ 
is the adjoint of $L$ in the sense that $\ninnerprod{L w , y}_{L^p,L^q} = \ninnerprod{L^*y,w}_{W^*,W}$ holds for all $w \in W$ and all $y \in  L^q(Q)$:
\begin{equation*}
\adjheat^* \coloneqq \partial_t - \Delta : L^q(Q) \rightarrow W^*.
\end{equation*}
Since $L$ is continuously invertible, so is its adjoint $L^*$.

\bigskip

Next we show that  $\cM(\bar{\varOmega}_c) \times \cM(\bar{Q}_c)$ embeds continuously into $W^*$.
This will justify putting measures on the right hand side of \eqref{eq:P}.

We choose two further subdomains $\varOmega'$, $\varOmega''  \subset \varOmega$ with smooth boundaries such that $\varOmega_c \subset\subset \varOmega' \subset\subset \varOmega''  \subset\subset  \varOmega$.
By interior regularity estimates (see, e.g., \cite[Theorem~4.3.7]{Krylov}), there exists a $C \geq 0$ such that
\begin{align*}
	\nnorm{w}_{L^p(\varOmega' \times I)}
	+
	\nnorm{\pd_t w}_{L^p(\varOmega' \times I)}	
	+
	\nnorm{\pd_x^2 w}_{L^p(\varOmega' \times I)}		
	\leq C\, \nnorm{L\,w}_{L^p(\varOmega'' \times I)}
	\leq C\, \nnorm{w}_{W}.
\end{align*}
Notice that the norms on the left hand side topologize $W^{2,1}_p(\varOmega' \times I)$.
Thus, the restriction operator $r \colon W \to 
	X_1 \ceq \set{v \in W^{2,1}_p(\varOmega' \times I) 
		: 
		v(T) =0 
	}$ given by $r(w) \ceq w|_{Q_c}$ is continuous.
For $p \in (1 + d/2, \infty)$  (which corresponds to $q \in (1, 1+ 2/d)$), one has a continuous embedding of $W^{2,1}_p(\varOmega' \times I)$ into $\cC(\bar \varOmega' \times \bar I)$ (see \cite[Theorem 10.4]{BesovI}). 
Thus, we have a continuous embedding
$j \colon X_1  \to X_2 \ceq \set{f \in \cC( \bar \varOmega' \times \bar I) : f(T) =0}$.
Utilizing the restriction operator
$s \colon X_2  \to \cC( \bar \varOmega_c) \times \cC(\bar Q_c)$, 
$s(f) \ceq (f(0)|_{\bar \varOmega_c}, f|_{\bar Q_c})$,
we define the continuous linear operator
\begin{align*}
	\Morrey \ceq s \circ j \circ r \colon W \to \cC( \bar \varOmega_c) \times \cC(\bar Q_c).
\end{align*}
This allows us to rewrite \eqref{eq:PDE} and \eqref{eq:2.1} into the following very compact form:
\begin{align}
	\adjheat^*y = \Morrey^* (u_0,u).
	\label{eq:StateEquation}
\end{align}
As we have already established that $\adjheat^*$ is continuously invertible, the well-posedness of \eqref{eq:PDE} and \eqref{eq:2.1} is now evident.

It will soon be essential that $\Morrey^*$ is injective.
Because of
\begin{equation*}
	\ninnerprod{\Morrey^* (u_0,u),w }_{W^*,W} 
	= 
	\textstyle
	\int_{\bar{\varOmega}_c} w(0)|_{ \bar{\varOmega}_c} \, \dd u_0 
	+ 
	\int_{\bar{Q}_c} w|_{ \bar{Q}_c} \, \dd u,
\end{equation*} 
it suffices to show that $\Morrey \colon W \to  \cC(\bar{\varOmega}_c) \times \cC(\bar{Q}_c)$ has dense image.
By virtue of Tietze's extension theorem (see \cite[Theorem~ 35.1.]{Munkres}),
the restriction operator $s$ is surjective. So it suffices to show that $j \circ r$ has dense image.
Next we observe that $\varOmega'$ is an extension domain and that 
each element of $X_2$ can be arbitrarily well approximated by the restriction of a function
$f \in  C^\infty(\R^d \times \bar I) $ with $f(T) =0$.
We pick a mollifier $\varphi \in C^\infty(\R^d)$ with $\supp(\varphi) \subset \varOmega$ and 
$\varphi|_{\bar \varOmega'} = 1$ and put $w(x,t) \ceq \varphi(x) \, f(x,t)$ for all $(x,t) \in Q$.
By construction, we have $w \in W$ and $j \circ r(w) = f$, showing that $\Morrey$ has indeed a dense image. Thus $\Morrey^* \colon \cM(\bar{\varOmega}_c) \times \cM(\bar{Q}_c) \to W^*$ is injective.

\subsection{Fenchel Duality}

As we want to solve our problem numerically by utilizing Newton-based methods, we have to cope with the fact that, because of $q<2$, the contribution $y \mapsto \frac{1}{q} \, \nnorm{y-\DesiredState}_{L^q}^q$ in the objective function $J$ is not twice differentiable.
Even worse, $u \mapsto \nnorm{u}_{\cM(\bar Q_c)}$ and $u_0 \mapsto \nnorm{u_0}_{\cM(\bar \varOmega_c)}$ are not differentiable at all. 
Fortunately, as demonstrated in \cite[Chapter 2.1.]{duality}, Fenchel duality can help here:
It allows us to transform problem \eqref{eq:P} into an optimization problem \eqref{eq:Pdual} that enjoys sufficient differentiability to make it amenable to the semi-smooth Newton method.
As a convenient side effect, this will also allow us to show that \eqref{eq:P} has a unique solution.

\bigskip

We define the problem \eqref{eq:Pdual} as follows; as it will turn out in Theorem~\ref{thm:fenchel}, this problem is indeed the Fenchel \emph{pre}dual of \eqref{eq:P}: 
\begin{equation}
\min_{w \in W}{ \; \dualfunction (w) } \coloneqq F(w) + G(\varPhi \, w),
\tag{$P^*$}
\label{eq:Pdual}
\end{equation}
where $F \colon W \to \R$ and $G \colon \cC(\bar{\varOmega}_c) \times \cC(\bar{Q}_c) \rightarrow \bar{\R} \coloneqq \mathbb{R} \cup \{\infty\}$
are given by
\begin{align*}
	F(w) 
	\ceq 
	\tfrac{1}{p} \nnorm{\adjheat w}_{L^p(Q)}^p + \ninnerprod{ \adjheat w,\DesiredState}_{L^p(Q),L^q(Q)}	
	\quad
	\text{and}
	\quad 
	G(f_0 , f)
	\ceq
	\begin{cases}
		0, 
		& \text{\textcolor{Cn}{if} $\nnorm{f_0}_{\cC(\bar \varOmega_c)} \leq \beta$ and $\nnorm{f}_{\cC(\bar Q_c)} \leq \alpha$},
		\\
		\infty,
		&\text{else.}
	\end{cases}
\end{align*}
%
%
%
%
%
\begin{theorem}\label{thm:PdualExistenceUniqueness}
	Let $q \in (1,2]$ satisfy $q < 1 + 2/d$. Then problem \eqref{eq:Pdual} has a unique solution $\bar{w} \in W$. 
\end{theorem}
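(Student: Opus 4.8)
The plan is to apply the direct method of the calculus of variations, supplemented by a strict-convexity argument for uniqueness. First I would record the structural properties of the two building blocks of $\dualfunction = F + G\circ\Morrey$. Since $\adjheat$ maps $W$ isomorphically onto $L^p(Q)$ with $\nnorm{w}_W = \nnorm{\adjheat w}_{L^p(Q)}$, and since $\DesiredState \in L^q(Q)$, the functional $F$ is finite and (by Hölder's inequality) continuous on all of $W$. It is moreover strictly convex: the space $L^p(Q)$ is strictly convex for $p \in (1,\infty)$ and $t \mapsto t^p$ is strictly convex and increasing on $[0,\infty)$, so $v \mapsto \tfrac1p \nnorm{v}_{L^p(Q)}^p$ is strictly convex on $L^p(Q)$; precomposition with the injective linear operator $\adjheat$ preserves strict convexity, and adding the linear term $\ninnerprod{\adjheat w, \DesiredState}$ cannot destroy it. The functional $G$ is the indicator of the set $\set{(f_0,f) : \nnorm{f_0}_{\cC(\bar\varOmega_c)} \leq \beta,\ \nnorm{f}_{\cC(\bar Q_c)} \leq \alpha}$, which is closed, convex and nonempty; hence $G$ is proper, convex and lower semicontinuous, and since $\Morrey$ is linear and continuous the composition $G\circ\Morrey$ is again proper (it vanishes at $w = 0$), convex and lower semicontinuous. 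Altogether, $\dualfunction$ is proper, strictly convex and lower semicontinuous on $W$.

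Second I would check coercivity. Because $G \geq 0$, for every $w \in W$ one has
\[
  \dualfunction(w) \;\geq\; F(w) \;=\; \tfrac1p \nnorm{w}_W^p + \ninnerprod{\adjheat w, \DesiredState}_{L^p(Q),L^q(Q)} \;\geq\; \tfrac1p \nnorm{w}_W^p - \nnorm{\DesiredState}_{L^q(Q)} \nnorm{w}_W,
\]
and since $p > 1$ the right-hand side tends to $+\infty$ as $\nnorm{w}_W \to \infty$. In particular $\dualfunction$ is bounded below, and, being proper, has a finite infimum over $W$.

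Finally, the existence proof runs along standard lines: $W$ is a reflexive Banach space, and a convex, lower semicontinuous functional on a Banach space is weakly (sequentially) lower semicontinuous, so a minimizing sequence $(w_n) \subset W$ — which is bounded by coercivity — has a weakly convergent subsequence $w_{n_k} \rightharpoonup \bar w$, and weak lower semicontinuity gives $\dualfunction(\bar w) \leq \liminf_k \dualfunction(w_{n_k}) = \inf_W \dualfunction$, i.e. $\bar w$ minimizes $\dualfunction$. Uniqueness follows at once from strict convexity: two distinct minimizers $\bar w_1 \neq \bar w_2$ would give $\dualfunction\bigl(\tfrac12(\bar w_1 + \bar w_2)\bigr) < \tfrac12 \dualfunction(\bar w_1) + \tfrac12 \dualfunction(\bar w_2) = \inf_W \dualfunction$, a contradiction.

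The only step requiring genuine care — and the closest thing to an obstacle here — is the strict convexity of $w \mapsto \tfrac1p \nnorm{\adjheat w}_{L^p(Q)}^p$, which hinges on strict convexity of the $L^p$-norm (available precisely because $1 < p < \infty$, guaranteed by the hypothesis $q \in (1,2]$, $q < 1 + 2/d$, which forces $p \in (2,\infty)$) combined with injectivity of $\adjheat$; the remaining ingredients are the routine verifications behind the direct method.
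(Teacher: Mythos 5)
Your proof is correct and follows essentially the same route as the paper: the direct method in the reflexive space $W$ (boundedness of a minimizing sequence, weak compactness, weak lower semicontinuity of $F$ and of the indicator $G\circ\Morrey$) plus strict convexity of $F$ — via injectivity of $\adjheat$ and $p\in(2,\infty)$ — for uniqueness. The only cosmetic difference is that you establish boundedness through an explicit coercivity estimate, while the paper normalizes the minimizing sequence to $K(w_k)\leq 0$ and applies Young's inequality; the content is the same.
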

\begin{proof}
	Let $\{w_k\}_k \subset W$ be a minimizing sequence so that 
	\begin{equation*}
	\lim_{k \rightarrow \infty }{K(w_k)} = \inf_{w \in W} K(w) \qec \underline{K}.
	\end{equation*}
	From $K(0) =0 $, we know that $\underline{K} \leq 0$, which allows us to assume without loss of generality that $K(w_k)\leq 0 < \infty$ for all $k$ and hence $G(\Morrey \, w_k)= 0$ for all $k$. With Hölder's inequality and Young's inequality in the form $-a \, b \geq - \tfrac{1}{p}\, a^p - \tfrac{1}{q}\, b^q$, we see 
	\begin{align*}
	K(w_k) = F(w_k) 
	&\geq  \tfrac{1}{p} \nnorm{\adjheat w_k}_{L^p(Q)}^p - \nnorm{\adjheat w_k}_{L^p(Q)} \nnorm{\DesiredState}_{L^q(Q)}
	\geq - \tfrac{1}{q}\nnorm{\DesiredState}_{L^q(Q)}^q
	\end{align*}
	and hence $\underline{K}> -\infty$.
	For all $k$ we have $F(w_k) \leq K(w_k)\leq 0$, which implies that  $\{w_k\}_k$ is a bounded sequence in $W$. 
	Recall that $W$ is reflexive. Hence the bounded sequence $\{w_k\}_k$ admits a weakly convergent subsequence: $w_{k'} \rightharpoonup \bar{w}$ in $W$ as $k' \to \infty$. 
	Likewise, we have $\Morrey \, w_{k'} \rightharpoonup \Morrey \, \bar{w}$ in $\cC(\bar{\varOmega}_c) \times \cC(\bar{Q}_c)$.
	As the indicator function of a closed, convex set, $G$ is convex and weakly lower semi continuous
	Thus, we are lead to $G(\Morrey \, \bar{w})=0$.	
	Also $F$ is weakly lower semi continuous, so we obtain:
	\begin{equation*}
	\underline{K} \leq K(\bar{w}) = F(\bar{w}) \leq \liminf_{k' \rightarrow \infty } F(w_{k'}) = \lim_{k' \rightarrow \infty} K(w_{k'}) = \lim_{k \rightarrow \infty} K(w_k) = \underline{K}.
	\end{equation*}
	This shows that $\bar w$ is a minimizer of \eqref{eq:Pdual}. 
	Moreover, 
	$G$ is convex
	and
	$F$ is strictly convex ($L$ is injective and $2 \leq p < \infty$),
	hence $K = F + G \circ \Morrey$ is also strictly convex. Thus, there cannot be more than one solution of \eqref{eq:Pdual}.
\end{proof}

%
%
%
%

Now we recall the Fenchel duality theorem in a similar notation as in \cite{Clasonlecturenotes} and \cite[Chapter 1.1.3.]{duality}. We also refer to \cite[Chapter III.4.]{Ekeland}.
For a convex, lower semi continuous functional $F \colon R \rightarrow \bar{\mathbb{R}}$ 
on a normed space $R$ with $\inf_{r \in R} F(r) < \infty$,
we define its \emph{Fenchel conjugate} by
\begin{equation*}
	F^* \colon R^* \rightarrow \bar{\mathbb{R}}, 
	\quad 
	F^*(\varrho) \ceq \sup_{r \in R}{ \ninnerprod{\varrho,r}_{R^*,R}}-F(r).
\label{eq:fenchelconj}
\end{equation*} 

\begin{theorem}[Fenchel Duality]\label{thm:FenchelDualityTheorem}
Let $R$ and $S$ be normed spaces with topological duals $R^*$ and~$S^*$ and let $\varLambda : R \rightarrow S$ be a continuous linear operator. 
Let $F \colon R \rightarrow \bar{\mathbb{R}}$ and $G \colon S \rightarrow \bar{\mathbb{R}} $ be convex lower semi continuous functionals and suppose that $F$ and $G$ are not identically equal to~$\infty$.
Consider the \emph{primal problem}
\begin{align}
	\inf_{r \in R}{F(r) + G(\varLambda \, r)}
	\label{eq:FenchelPrimalProblem}
\end{align}
and the \emph{dual problem}
\begin{align}
	\sup_{\sigma \in S^*}{ -F^*(\varLambda^* \sigma)-G^*(-\sigma)}.
	\label{eq:FenchelDualProblem}
\end{align}
Suppose the following two conditions are fulfilled:
\begin{itemize}
	\item The primal problem \eqref{eq:FenchelPrimalProblem} has at least one solution.
	\item The \emph{regular point conditions} is fulfilled, i.e., there exists an $r_0 \in R$, such that 
	$F(r_0)<\infty $
	and
	$G(\varLambda \,  r)<\infty$ for all $r$ in a sufficiently small neighborhood of $r_0$.
\end{itemize}
Then also the \emph{dual problem}
has at least one solution and one has the identity
\begin{equation}
	\min_{r \in R}{F(r)+G(\varLambda\, r)} 
	= 
	\max_{\sigma \in S^*}{-F^*(\varLambda^* \sigma)-G^*(-\sigma)} .
	\label{eq:fenchel}
\end{equation}
Furthermore, for $r \in R$ and $\sigma \in S^*$, the following \textcolor{Mn}{three} statements are equivalent:
\begin{enumerate}
	\item $r$ is a solution of the primal problem \eqref{eq:FenchelPrimalProblem}
	and $\sigma$ is a solution of the dual problem \eqref{eq:FenchelDualProblem}.
	\item $F(r)+G(\varLambda \, r) = -F^*(\varLambda^* \sigma)-G^*(- \sigma)$.
	\item $\varLambda^* \sigma \in \partial F(r) $ and $ - \sigma \in \partial G(\varLambda \,  r). $ \label{eq:Fencheldifferentials} 
\end{enumerate}
\end{theorem}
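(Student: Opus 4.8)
This is the classical Fenchel--Rockafellar duality theorem (cf.\ \cite{Ekeland,Clasonlecturenotes,duality}); the plan is the standard three-step argument -- weak duality, strong duality via a perturbed value function, and then the optimality characterisations. First I would record \emph{weak duality}: for any $r \in R$ and $\sigma \in S^*$, the Fenchel--Young inequality applied to $F$ at $r$ with the functional $\varLambda^*\sigma$, and to $G$ at $\varLambda r$ with the functional $-\sigma$, gives
\begin{equation*}
	F(r) + F^*(\varLambda^*\sigma) \geq \ninnerprod{\varLambda^*\sigma, r}_{R^*,R} = \ninnerprod{\sigma, \varLambda r}_{S^*,S}
	\qquad\text{and}\qquad
	G(\varLambda r) + G^*(-\sigma) \geq -\ninnerprod{\sigma, \varLambda r}_{S^*,S} .
\end{equation*}
Adding these and cancelling the paired terms yields $F(r) + G(\varLambda r) \geq -F^*(\varLambda^*\sigma) - G^*(-\sigma)$, so the infimum in \eqref{eq:FenchelPrimalProblem} dominates the supremum in \eqref{eq:FenchelDualProblem}. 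Since $F,G$ take values in $\bar{\R}$ and the primal problem has a solution, the primal optimal value $\mu$ is finite; the regular point condition keeps it $<\infty$, so $\mu \in \R$.

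The core step is \emph{strong duality and dual attainment}, which I would obtain via the perturbed value function $h(z) \ceq \inf_{r \in R} \bigl( F(r) + G(\varLambda r + z) \bigr)$ for $z \in S$. It is convex (infimal projection of the jointly convex map $(r,z) \mapsto F(r) + G(\varLambda r + z)$) and satisfies $h(0) = \mu$. Estimating $h(z) \leq F(r_0) + G(\varLambda r_0 + z)$ with the distinguished point $r_0$ from the regular point condition shows that $h$ is bounded above on a neighbourhood of $0$ (here one uses that $\varLambda r_0$ lies in the interior of $\dom G$, which holds for the indicator-type $G$ of interest). Hence $\epi h$ has nonempty interior, while $(0,\mu)$ lies on its boundary since $h(0)=\mu$; a Hahn--Banach separation of $(0,\mu)$ from $\epi h$ produces a supporting hyperplane whose $t$-component cannot vanish (because $\epi h$ contains upward vertical rays over a neighbourhood of $0$), and normalising that component to $1$ exhibits an element $-\bar\sigma \in \partial h(0) \subseteq S^*$ and, incidentally, shows $h > -\infty$. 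Unpacking the subgradient inequality $h(z) \geq \mu - \ninnerprod{\bar\sigma, z}_{S^*,S}$ through the definition of $h$, substituting $z \mapsto z - \varLambda r$, and taking suprema over $r \in R$ and $z \in S$ then yields $F^*(\varLambda^*\bar\sigma) + G^*(-\bar\sigma) \leq -\mu$; combined with weak duality this forces equality everywhere, which is exactly the identity \eqref{eq:fenchel} together with the fact that $\bar\sigma$ solves \eqref{eq:FenchelDualProblem}.

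Finally, for the \emph{equivalences}: (1)$\Rightarrow$(2) is immediate from \eqref{eq:fenchel}, and conversely, weak duality squeezes both $F(r)+G(\varLambda r)$ and $-F^*(\varLambda^*\sigma)-G^*(-\sigma)$ between the primal infimum in \eqref{eq:FenchelPrimalProblem} and the dual supremum in \eqref{eq:FenchelDualProblem}, so (2) makes all of them agree and hence $r,\sigma$ are optimal, giving (2)$\Rightarrow$(1). For (2)$\Leftrightarrow$(3) I would rewrite (2) as
\begin{equation*}
	\bigl( F(r) + F^*(\varLambda^*\sigma) - \ninnerprod{\varLambda^*\sigma, r} \bigr) + \bigl( G(\varLambda r) + G^*(-\sigma) + \ninnerprod{\sigma, \varLambda r} \bigr) = 0 ,
\end{equation*}
each summand being $\geq 0$ by Fenchel--Young; so (2) holds iff both summands vanish, and the vanishing of the first (resp.\ second) is precisely the Fenchel--Young equality case, equivalent to $\varLambda^*\sigma \in \partial F(r)$ (resp.\ $-\sigma \in \partial G(\varLambda r)$) by the conjugate characterisation of the subdifferential. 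The main obstacle is the strong-duality step, specifically the single topological point that the regular point condition makes $\epi h$ have nonempty interior and hence $h$ subdifferentiable at the origin; once the separating hyperplane is in hand, everything else is routine manipulation of conjugates and the Fenchel--Young inequality.
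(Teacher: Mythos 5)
The paper itself does not prove Theorem \ref{thm:FenchelDualityTheorem}; it is recalled as a classical result and the proof is delegated to the literature (\cite[Chapter III.4.]{Ekeland}, see also \cite{Clasonlecturenotes} and \cite[Chapter 1.1.3.]{duality}). Your sketch is exactly the standard argument given in those references: weak duality from Fenchel--Young, strong duality and dual attainment via the perturbed value function $h(z)\ceq\inf_{r\in R}\bigl(F(r)+G(\varLambda r+z)\bigr)$, subdifferentiability of $h$ at $0$ by separating the boundary point $(0,h(0))$ from $\epi h$, and the three equivalences through the equality case of Fenchel--Young. The individual steps are carried out correctly, including the change of variables that converts the subgradient inequality for $h$ at $0$ into $F^*(\varLambda^*\bar\sigma)+G^*(-\bar\sigma)\le -h(0)$, which together with weak duality gives \eqref{eq:fenchel} and optimality of $\bar\sigma$.

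The only delicate point is the one you flagged yourself. The regular point condition as stated gives finiteness of $G$ only on $\varLambda\bigl(B_\varepsilon(r_0)\bigr)$, which in general is \emph{not} a neighbourhood of $\varLambda r_0$ in $S$ (e.g.\ if $\varLambda$ is not open), whereas your separation step needs $G$ bounded above on an $S$-neighbourhood of $\varLambda r_0$, i.e.\ essentially continuity of $G$ at $\varLambda r_0$ --- which is precisely the hypothesis in Ekeland--Temam. So as a proof of the theorem in the literal generality stated here, that step is not covered by the written hypothesis, and your parenthetical appeal to ``indicator-type $G$'' closes the gap only for the application in this paper, where $G$ is the indicator of a product of closed balls with radii $\alpha,\beta>0$, $\varLambda w_0=0$, and continuity of $G$ at $\varLambda w_0$ is verified in the proof of Theorem \ref{thm:PisUniquelySolvable}. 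This mismatch is a defect of the paper's paraphrase of the classical regularity condition rather than of your argument; under the Ekeland--Temam form of the condition your proof is complete.
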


We are going to apply the Fenchel duality theorem to $R =W$, $S \ceq \cC(\bar{\varOmega}_c) \times \cC(\bar{Q}_c)$, and \textcolor{Mn}{$\varLambda = \Morrey$}.
To this end, we show first that \eqref{eq:Pdual} and \eqref{eq:P} are dual to each other.
%
%
%
%
%
\begin{theorem}\label{thm:FenchelDuality}
	The Fenchel dual problem of \eqref{eq:Pdual} coincides with  \eqref{eq:P}. 
	\label{thm:fenchel}
\end{theorem}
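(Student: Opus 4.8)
The plan is to compute the two Fenchel conjugates $F^*$ and $G^*$ occurring in the dual of \eqref{eq:Pdual} and to recognise the resulting functional as $-J$. With $R = W$, $S = \cC(\bar{\varOmega}_c)\times\cC(\bar{Q}_c)$ (so that $S^* = \cM(\bar{\varOmega}_c)\times\cM(\bar{Q}_c)$ by the Riesz representation theorem) and $\varLambda = \Morrey$, the formula \eqref{eq:FenchelDualProblem} in Theorem~\ref{thm:FenchelDualityTheorem} identifies the Fenchel dual of \eqref{eq:Pdual} with
\begin{equation*}
	\sup_{(u_0,u)}\;
	-F^*\bigl(\Morrey^*(u_0,u)\bigr) - G^*\bigl(-(u_0,u)\bigr),
	\qquad (u_0,u) \in \cM(\bar{\varOmega}_c)\times\cM(\bar{Q}_c).
\end{equation*}
Since $F$ is finite, continuous and convex, and $G$ is the indicator of a closed convex set, both conjugates are well defined; it then remains to show that $G^*(-(u_0,u)) = \beta\,\nnorm{u_0}_{\cM(\bar{\varOmega}_c)} + \alpha\,\nnorm{u}_{\cM(\bar{Q}_c)}$ and that $F^*(\Morrey^*(u_0,u)) = \tfrac1q\nnorm{y-\DesiredState}_{L^q(Q)}^q$, where $y$ is the state belonging to $(u_0,u)$.

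The computation of $G^*$ is the routine part: $G$ is the indicator function of the product of the closed balls $\set{\nnorm{f_0}_{\cC(\bar{\varOmega}_c)}\le\beta}$ and $\set{\nnorm{f}_{\cC(\bar{Q}_c)}\le\alpha}$, so $G^*$ is the support function of that product, which factorises into a sum; as the support function of a ball of radius $\varrho$ in a normed space is $\varrho$ times the dual norm, one obtains $G^*(v_0,v) = \beta\,\nnorm{v_0}_{\cM(\bar{\varOmega}_c)} + \alpha\,\nnorm{v}_{\cM(\bar{Q}_c)}$. Absolute homogeneity of the total variation norm then gives the claimed value of $G^*(-(u_0,u))$, which is exactly the pair of penalty terms in $J$.

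The substance of the proof is $F^*$. I would factor $F = \tilde F\circ\adjheat$ with $\tilde F(g)\ceq\tfrac1p\nnorm{g}_{L^p(Q)}^p + \ninnerprod{g,\DesiredState}_{L^p(Q),L^q(Q)}$. Because $\adjheat\colon W\to L^p(Q)$ is an isomorphism of Banach spaces (Section~\ref{sec:contoptsys}), the substitution $g=\adjheat w$ in the definition of the conjugate yields the chain rule $F^*(\xi) = \tilde F^*\bigl((\adjheat^*)^{-1}\xi\bigr)$, using $(\adjheat^{-1})^* = (\adjheat^*)^{-1}$. For $\tilde F^*$ one invokes the standard identity that the Fenchel conjugate of $g\mapsto\tfrac1p\nnorm{g}_{L^p(Q)}^p$, with respect to the pairing of $L^p(Q)$ with $L^q(Q)$, is $z\mapsto\tfrac1q\nnorm{z}_{L^q(Q)}^q$, together with the fact that adding the linear term $\ninnerprod{\,\cdot\,,\DesiredState}$ merely translates the argument of the conjugate; this gives $\tilde F^*(z) = \tfrac1q\nnorm{z-\DesiredState}_{L^q(Q)}^q$. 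Finally, setting $\xi = \Morrey^*(u_0,u)$ and using the state equation \eqref{eq:StateEquation}, namely $\adjheat^* y = \Morrey^*(u_0,u)$, the element $(\adjheat^*)^{-1}\Morrey^*(u_0,u)\in L^q(Q)$ is precisely the very-weak solution $y$ of \eqref{eq:PDE} associated with $(u_0,u)$, whence $F^*(\Morrey^*(u_0,u)) = \tfrac1q\nnorm{y-\DesiredState}_{L^q(Q)}^q$.

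Assembling the pieces, the dual functional equals $-\tfrac1q\nnorm{y-\DesiredState}_{L^q(Q)}^q - \alpha\,\nnorm{u}_{\cM(\bar{Q}_c)} - \beta\,\nnorm{u_0}_{\cM(\bar{\varOmega}_c)} = -J(u_0,u)$, so maximising it over $(u_0,u)$ is the same optimisation problem as minimising $J$, i.e.\ \eqref{eq:P}. I expect the main obstacle to be the careful operator bookkeeping: justifying the conjugate chain rule in the Banach space setting (which rests on $\adjheat$ being an isomorphism, already established) and matching the duality pairings cleanly so that $(\adjheat^*)^{-1}\Morrey^*(u_0,u)$ is correctly identified with the state via \eqref{eq:StateEquation}; the scalar conjugacy facts for $\tfrac1p\nabs{\cdot}^p$ and for indicators of balls are standard. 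One should also note that ``coincides'' is meant in the natural sense that the $\sup$-problem and the $\min$-problem \eqref{eq:P} have the same negated optimal value and the same set of solutions.
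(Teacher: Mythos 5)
Your proof is correct, and its skeleton is the same as the paper's: apply Theorem~\ref{thm:FenchelDualityTheorem} with $R=W$, $S=\cC(\bar{\varOmega}_c)\times\cC(\bar{Q}_c)$, $\varLambda=\Morrey$, and verify that $F^*(\Morrey^*(u_0,u))+G^*(-(u_0,u))=J(u_0,u)$. The difference lies in how the two conjugates are computed. For $F^*$, the paper uses the Fenchel--Young equality $F^*(\xi)=\ninnerprod{\xi,w}-F(w)$ for $\xi\in\partial F(w)=\{DF(w)\}$, solves the stationarity condition explicitly for $w$ (formula \eqref{eq:upartialF}, with the exponent $1/(p-1)$) and substitutes back; you instead factor $F=\tilde F\circ\adjheat$ through the Banach-space isomorphism $\adjheat$, apply the conjugate composition rule $F^*=\tilde F^*\circ(\adjheat^*)^{-1}$, and use the standard pair $\bigl(\tfrac1p\nnorm{\cdot}_{L^p}^p\bigr)^*=\tfrac1q\nnorm{\cdot}_{L^q}^q$ together with the translation caused by the linear term $\ninnerprod{\cdot,\DesiredState}$; identifying $(\adjheat^*)^{-1}\Morrey^*(u_0,u)$ with the very weak state via \eqref{eq:StateEquation} then gives the same expression $\tfrac1q\nnorm{y-\DesiredState}_{L^q(Q)}^q$. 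Your route is more modular and avoids the explicit nonlinear inversion, but note that the paper's explicit relation \eqref{eq:upartialF} is not wasted effort: it is reused later to motivate the variational discretization and to recover the optimal control numerically (cf.\ \eqref{eq:uweinsdis}), a byproduct your composition argument does not deliver. For $G^*$ the two computations are essentially identical: the paper splits $G=\ell_\alpha+\ell_\beta$ and invokes a conjugate-of-sums result, while you read off the support function of a product of balls directly (with the sign handled by symmetry of the balls, as you note); both yield $\alpha\,\nnorm{u}_{\cM(\bar Q_c)}+\beta\,\nnorm{u_0}_{\cM(\bar\varOmega_c)}$.
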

\begin{proof}
	It suffices to show that $J(u_0,u) = F^*(\textcolor{Mn}{\Morrey^*} (u_0,u)) + G^*(-u_0,-u)$.
	
	In order to calculate $F^*$, we use the equivalence of the following statements on $w \in W$ and $\xi \in W^*$:
	\begin{equation}
	F^*(\xi)
	= 
	\ninnerprod{\xi,w}_{W^*,W}  - F(w)
	\quad 
	\textrm{if and only if} 
	\quad 
	\xi \in \partial F(w).
	\label{eq:fenchelsubdiff}
	\end{equation}
	Here $\partial F(w)$ denotes the subdifferential of the convex functional $F$.
	Since $F$ is Fréchet differentiable, we have $\partial F(w) = \set{ DF(w)}$.
	Hence $\xi \in \partial F(w)$ is given by
	$\xi = \adjheat^* \nparen{ \nabs{\adjheat w}^{p-2} \adjheat w + \DesiredState }$.
	Solving for $w$ leads to
	\begin{equation}
	w
	= 
	\adjheat^{-1} \bigparen{\sign \left(\adjheat^{-*} \xi - \DesiredState\right) \, \nabs{\adjheat^{-*} \xi - \DesiredState}^{1/(p-1)}}.
	\label{eq:upartialF}
	\end{equation}
	Substituting this into 
	\eqref{eq:fenchelsubdiff} 
	and utilizing $\ninnerprod{\xi ,\adjheat^{-1}z}_{W^*,W} = \ninnerprod{z,\adjheat^{-*} \xi}_{L^{p}(Q),L^q(Q)} $ for all $z \in L^p(Q)$, we derive 
	\begin{align*}
	F^*(\xi)
	&= 
	\ninnerprod{ \xi,w }_{W^*,W} -  \tfrac{1}{p} \nnorm{ \adjheat w }_{L^p(Q)}^p - \ninnerprod{ \adjheat w,\DesiredState }_{L^p(Q),L^q(Q)}    
	\\
	&= 
	\ninnerprod{\sign \bigparen{\adjheat^{-*} \xi - \DesiredState} \, |\adjheat^{-*} \xi - \DesiredState|^{1/(p-1)},\adjheat^{-*} \xi - \DesiredState}_{L^p(Q),L^q(Q)} -  \tfrac{1}{p} \nnorm{ \adjheat^{-*} \xi - \DesiredState }_{L^q(Q)}^q 
	\\
	&=\tfrac{1}{q} \| \adjheat^{-*} \xi - \DesiredState\|_{L^q(Q)}^q.
	\end{align*}
	
	In order to derive $G^*(u_0,u)$ we can interpret $G(f_0,f)$, as consisting of two summands, which represent an indicator function with only one constraint respectively, where we want to use the following notation:
	\begin{equation*}
	\ell_{\alpha}(0,f) + \ell_{\beta}(f_0,0) \coloneqq G(f_0,f).
	\end{equation*}
	Here, we make use of $ (f_0,f)\in \cC(\bar{\varOmega}_c)  \times \cC(\bar{Q}_c) $ and $ \left(\cC(\bar{\varOmega}_c)  \times \cC(\bar{Q}_c) \right)^*  =\cM(\bar{\varOmega}_c)  \times \cM(\bar{Q}_c) $. From \cite[Theorem 2.2.8]{Schiro} we know that for $\tilde{u} =   ( u_0 , 0 ) + ( 0 , u ) \in \cM(\bar{\varOmega}_c)  \times \cM(\bar{Q}_c)$ it holds that :
	\begin{equation*}
	G^*(\tilde{u}) = \bigparen{ \ell_{\alpha}+\ell_{\beta} } ^* (u_0, u)   = \ell_{\alpha}^* ( 0, u)  + \ell_{\beta}^* ( u_0, 0).
	\end{equation*} 

	Looking at both conjugates separately, we can use \eqref{eq:fenchelsubdiff} and derive:
	\begin{align*}
	\ell_{\alpha}^* ( 0, u) 
	&= \sup_{(0,f)\in \cC(\bar{\varOmega}_c)  \times \cC(\bar{Q}_c)} 
	\ninnerprod{ (0,u),(0,f)} 	- \ell_{\alpha}(0,f)\\
	&=\sup_{f \in  \cC(\bar{Q}_c),\, \nnorm{f}_{\cC(\bar{Q}_c)}\leq \alpha}\, 
	\textstyle 
	\int_{\bar{Q}_c}  f \, \dd u 
	\; = \; \alpha \, \nnorm{u}_{\cM(\bar{Q}_c)},
	\end{align*}
	where 
	$\ninnerprod{(u_0,u),(f_0,f)} \coloneqq \int_{\bar{\varOmega}_c} f_0 \, \dd u_0 + \int_{\bar{Q}_c} f \,\dd u$. 
	Analogously, we observe that $\ell_{\beta}^*( u_0, 0)  = \beta \,  \nnorm{u_0}_{\mathcal{M}(\bar{\varOmega}_c)}$. 
	Assembling this information, we obtain 
	\begin{align*}
		G^*(\tilde{u}) = \alpha \, \nnorm{u}_{\cM(\bar{Q}_c)} + \beta \,  \nnorm{u_0}_{\cM(\bar{\varOmega}_c)}.
	\end{align*}
\end{proof}

\begin{theorem}\label{thm:PisUniquelySolvable}
Problem \eqref{eq:P} has a unique solution $(\bar u_0,\bar u)$, which is characterized by
\begin{align}
	\Morrey^* (\bar u_0, \bar u) = DF(\bar w) = \adjheat^* \nparen{ \nabs{\adjheat \bar w}^{p-2} \adjheat \bar w + \DesiredState} \quad \textcolor{Mn}{ \text{and} \quad -(\bar{u}_0,\bar{u}) \in \partial G (\Morrey (\bar{w})),}
	\label{eq:RecoverControl}
\end{align}
where $\bar w$ is the unique solution of \eqref{eq:Pdual}.
The optimal state $\bar y$ can be retrieved from $\bar w$ via
\begin{align*}
	\bar y = \adjheat^{-*} \Morrey^* (\bar u_0, \bar u) = \nabs{\adjheat \bar w}^{p-2} \adjheat \bar w + \DesiredState.
\end{align*}
\end{theorem}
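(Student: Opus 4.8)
The plan is to obtain the statement as a consequence of the Fenchel duality theorem (Theorem~\ref{thm:FenchelDualityTheorem}) applied with $R = W$, $S = \cC(\bar{\varOmega}_c) \times \cC(\bar{Q}_c)$, $\varLambda = \Morrey$, and $F$, $G$ as in \eqref{eq:Pdual}, where \eqref{eq:Pdual} itself plays the role of the \emph{primal} problem \eqref{eq:FenchelPrimalProblem}. By Theorem~\ref{thm:FenchelDuality} the corresponding dual problem \eqref{eq:FenchelDualProblem} is precisely \eqref{eq:P}, so every conclusion about the dual problem translates into a conclusion about \eqref{eq:P}.

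First I would check the hypotheses of Theorem~\ref{thm:FenchelDualityTheorem}. The functional $F$ is continuous and convex, $G$ is the indicator of a nonempty closed convex set (a product of balls of radii $\beta$ and $\alpha$) and hence convex and lower semi continuous, and neither is identically $+\infty$. The primal problem \eqref{eq:Pdual} has a solution, namely the unique minimizer $\bar w$ furnished by Theorem~\ref{thm:PdualExistenceUniqueness}. For the regular point condition choose $r_0 = 0$: then $F(0) = 0 < \infty$, and since $\Morrey$ is continuous with $\Morrey(0) = 0$ lying in the interior of $\set{(f_0,f) : \nnorm{f_0}_{\cC(\bar{\varOmega}_c)} \le \beta, \ \nnorm{f}_{\cC(\bar{Q}_c)} \le \alpha}$ (using $\alpha, \beta > 0$), one has $G(\Morrey\, r) = 0 < \infty$ for every $r$ in a sufficiently small neighborhood of $0$. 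Thus Theorem~\ref{thm:FenchelDualityTheorem} applies and yields, in particular, that \eqref{eq:P} has at least one solution and that statements (1) and (3) of that theorem are equivalent.

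Next I would establish uniqueness together with the characterization \eqref{eq:RecoverControl}. Let $(\bar u_0, \bar u)$ be any solution of \eqref{eq:P}. Since $\bar w$ is the solution of the primal problem, the equivalence of (1) and (3) in Theorem~\ref{thm:FenchelDualityTheorem} gives $\Morrey^*(\bar u_0, \bar u) \in \partial F(\bar w)$ and $-(\bar u_0, \bar u) \in \partial G(\Morrey\, \bar w)$. Because $F$ is Fréchet differentiable, $\partial F(\bar w) = \set{DF(\bar w)}$ is a singleton, and from the computation in the proof of Theorem~\ref{thm:FenchelDuality} one has $DF(\bar w) = \adjheat^*\nparen{\nabs{\adjheat \bar w}^{p-2}\adjheat\bar w + \DesiredState}$. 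Hence $\Morrey^*(\bar u_0, \bar u) = DF(\bar w)$ is uniquely determined, and since $\Morrey^*$ was shown to be injective, $(\bar u_0, \bar u)$ is itself uniquely determined; this is exactly \eqref{eq:RecoverControl}. Finally, applying $\adjheat^{-*}$ to the state equation \eqref{eq:StateEquation} gives $\bar y = \adjheat^{-*}\Morrey^*(\bar u_0, \bar u) = \adjheat^{-*}\adjheat^*\nparen{\nabs{\adjheat\bar w}^{p-2}\adjheat\bar w + \DesiredState} = \nabs{\adjheat\bar w}^{p-2}\adjheat\bar w + \DesiredState$.

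I expect the only delicate point to be conceptual bookkeeping rather than a genuine obstacle: one must keep straight that \eqref{eq:Pdual} is the primal and \eqref{eq:P} the dual in the Fenchel pairing, and one must notice that uniqueness for \eqref{eq:P} does \emph{not} follow from convexity of $J$ (which is not strictly convex in $(u_0,u)$, unlike $K$ in $w$), but rather from combining the single-valuedness of $\partial F(\bar w)$ with the injectivity of $\Morrey^*$ established in Section~\ref{sec:contoptsys}.
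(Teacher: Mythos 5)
Your proof is correct, and it follows the paper's overall strategy (apply Theorem~\ref{thm:FenchelDualityTheorem} with $R=W$, $\varLambda=\Morrey$, \eqref{eq:Pdual} as primal and \eqref{eq:P} as dual; same regular point $r_0=0$; same existence argument). The one genuine difference is how uniqueness of $(\bar u_0,\bar u)$ is obtained. The paper argues directly on the level of the objective: since $\adjheat^{-*}\Morrey^*$ is injective and $q>1$, the functional $J$ is \emph{strictly} convex in $(u_0,u)$, so there can be only one minimizer. You instead derive uniqueness from the extremality relations: any dual solution must satisfy $\Morrey^*(\bar u_0,\bar u)=DF(\bar w)$ with $\bar w$ the unique primal minimizer and $\partial F(\bar w)$ a singleton, and injectivity of $\Morrey^*$ then pins down $(\bar u_0,\bar u)$. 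Both routes are valid and both ultimately rest on the injectivity of $\Morrey^*$ established in Section~\ref{sec:contoptsys}; your variant has the mild advantage of delivering uniqueness and the characterization \eqref{eq:RecoverControl} in one stroke, while the paper's argument is independent of the duality machinery and would survive even if $F$ were not differentiable.

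One remark in your last paragraph is wrong, although it does not affect your proof: you claim that $J$ is not strictly convex in $(u_0,u)$. It is. The map $z\mapsto\tfrac{1}{q}\nnorm{z-\DesiredState}_{L^q(Q)}^q$ is strictly convex for $q>1$, and precomposition with the \emph{injective} linear map $(u_0,u)\mapsto \adjheat^{-*}\Morrey^*(u_0,u)$ preserves strict convexity; adding the convex measure-norm terms keeps it. This is precisely the paper's uniqueness argument, so your proof is not fixing a gap there but offering an alternative to an argument that already works.
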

\begin{proof}
	We have seen already in the proof of \textcolor{Mn}{Theorem} \ref{thm:PdualExistenceUniqueness} that 
	both	
	$F$ and $G$ are convex and lower semi continuous and that $F$ is even strictly convex. Furthermore we set $\varLambda = \Morrey$. 
	By \textcolor{Mn}{Theorem} \ref{thm:PdualExistenceUniqueness}, the problem \eqref{eq:Pdual} has a at least one minimizer.
	For $w_0 = 0$, we have $\Morrey \, w_0 = 0$ and $\adjheat w_0 = 0$, showing that $F(w_0) < \infty $ and $G(\Morrey\, w_0)< \infty$. 
	Due to $\alpha, \beta >0$, $G$ is continuous at $\Morrey \, w_0$ so that also the regular point condition is fulfilled.
	Thus, we may apply the Fenchel duality theorem, which implies that \eqref{eq:P} has at least one solution. 
	Since \textcolor{Mn}{$\adjheat^{-*} \Morrey^*$ is injective and $q>1$}, $J$ is strictly convex, hence there is only one solution $(\bar u_0,\bar u)$.
	By the the third condition from \textcolor{Mn}{Theorem} \ref{thm:FenchelDualityTheorem},
	each solution has to satisfy
	$\Morrey^* (\bar u_0,\bar u) \in \partial F(\bar w) =  \set{DF(\bar w)}$ \textcolor{Mn}{and $-(\bar{u}_0,\bar{u}) \in \partial G (\Morrey (\bar{w}))$},
	where $\bar w$ is a solution of \eqref{eq:Pdual}.
\end{proof}

\subsection{Sparsity Structure}

We recall the optimality conditions for the solution $\bar y$ of \eqref{eq:P} from \cite[Theorem 3.1.]{CasKun} and the resulting sparsity structure \cite[Corollary 3.2.]{CasKun} of the optimal controls $\nparen{\bar{u}_0,\bar{u}}$. 
These results can be directly transferred to our setting since the latter is a special case of the formulation in \cite{CasKun}.

\begin{lemma} \label{lem:optimalitycond}
Let $\nparen{\bar{u}_0,\bar{u}}$ denote a solution to $(P)$ with associated state $\bar{y}$.
Denote by $\bar w \in W$ the unique solution of
\begin{align*}
	L \, \bar w = \nabs{\bar{y}-\DesiredState}^{q-2} (\bar{y}-\DesiredState)
	\in L^p(Q),
\end{align*}
\textcolor{Mn}{which follows from the first part of \eqref{eq:RecoverControl} by solving for $L \, \bar w$.}
\textcolor{Mn}{From the second part of \eqref{eq:RecoverControl} we have that $-(\bar{u}_0,\bar u) \in \partial G(\Morrey(\bar w)) \Leftrightarrow \Morrey (\bar w) \in \partial G^*(-(\bar u_0 , \bar u))$, hence} $\bar w$ satisfies
\begin{align*}
	\textstyle
\int_{\bar{\varOmega}_c}{\bar{w}(0)\, d\bar{u}_0} + \beta \, \nnorm{\bar{u}_0}_{\mathcal{M}(\bar{\varOmega}_c)}=0,
	\quad	
	\int_{\bar{Q}_c}{\bar{w} \, d\bar{u}} + \alpha \, \nnorm{\bar{u}}_{\mathcal{M}(\bar{Q}_c)}=0.
\end{align*}
and
\begin{align*}
\setlength{\arraycolsep}{0.2em}
\begin{array}{rclcrcl}
	\nabs{\bar{w}(x,t)} &= 		&\alpha 	& \quad\text{for all}\quad &(x,t) 	&\in &\bar{Q}_c \cap \supp(\bar u),\\
	\nabs{\bar{w}(x,t)} &\leq 	&\alpha 	& \quad\text{for all}\quad &(x,t) 	&\in &\bar{Q}_c \setminus \supp(\bar u),\\
	\nabs{\bar{w}(x,0)} &= 		&\beta 	& \quad\text{for all}\quad &x 		&\in &\bar{\varOmega}_c \cap \supp(\bar u_0),\\
	\nabs{\bar{w}(x,0)} &\leq 	&\beta 	& \quad\text{for all}\quad &x 		&\in &\bar{\varOmega}_c \setminus \supp(\bar u_0).
\end{array}
\end{align*}
\end{lemma}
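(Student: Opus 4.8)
The plan is to derive Lemma~\ref{lem:optimalitycond} entirely from the characterization of the solution $(\bar u_0,\bar u)$ obtained in Theorem~\ref{thm:PisUniquelySolvable}, together with the explicit form of $F^*$ and $G^*$ computed in the proof of Theorem~\ref{thm:FenchelDuality}. Since the lemma merely restates \cite[Theorem~3.1.]{CasKun} in the present notation, the whole task is to connect the abstract Fenchel-duality relations \eqref{eq:RecoverControl} to the concrete pointwise statements about $\bar w$.

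First I would establish the adjoint equation for $\bar w$. Starting from the first part of \eqref{eq:RecoverControl}, namely $\Morrey^*(\bar u_0,\bar u) = \adjheat^*(\nabs{\adjheat\bar w}^{p-2}\adjheat\bar w + \DesiredState)$, and recalling from Theorem~\ref{thm:PisUniquelySolvable} that $\bar y = \nabs{\adjheat\bar w}^{p-2}\adjheat\bar w + \DesiredState$, I solve for $\adjheat\bar w$: applying the (pointwise) inverse of $z\mapsto \nabs{z}^{p-2}z$, which is $z\mapsto \sign(z)\nabs{z}^{1/(p-1)} = \sign(z)\nabs{z}^{q-1}$ since $1/(p-1) = q-1$, one gets $\adjheat\bar w = \sign(\bar y - \DesiredState)\,\nabs{\bar y-\DesiredState}^{q-1} = \nabs{\bar y - \DesiredState}^{q-2}(\bar y-\DesiredState)$, which lies in $L^p(Q)$ because $\bar y - \DesiredState \in L^q(Q)$ and $(q-1)p = q$. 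This is exactly the stated equation $L\bar w = \nabs{\bar y - \DesiredState}^{q-2}(\bar y-\DesiredState)$, and uniqueness of $\bar w$ follows since $\adjheat$ is an isomorphism.

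Next I would unpack the inclusion $-(\bar u_0,\bar u)\in\partial G(\Morrey\bar w)$. By the standard Fenchel identity for convex lsc functionals, this is equivalent to $\Morrey\bar w \in \partial G^*(-(\bar u_0,\bar u))$, and also to the extremality identity $\langle -(\bar u_0,\bar u),\Morrey\bar w\rangle = G(\Morrey\bar w) + G^*(-(\bar u_0,\bar u))$. Since $\bar w$ is feasible for \eqref{eq:Pdual} we have $G(\Morrey\bar w)=0$, and from the proof of Theorem~\ref{thm:FenchelDuality}, $G^*(-(\bar u_0,\bar u)) = \alpha\nnorm{\bar u}_{\cM(\bar Q_c)} + \beta\nnorm{\bar u_0}_{\cM(\bar\varOmega_c)}$. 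Writing $\langle -(\bar u_0,\bar u),\Morrey\bar w\rangle = -\int_{\bar\varOmega_c}\bar w(0)\,\dd\bar u_0 - \int_{\bar Q_c}\bar w\,\dd\bar u$ and separating the two independent summands (the $\ell_\alpha$ and $\ell_\beta$ parts decouple exactly as in the computation of $G^*$), this yields the two integral identities $\int_{\bar\varOmega_c}\bar w(0)\,\dd\bar u_0 + \beta\nnorm{\bar u_0}_{\cM(\bar\varOmega_c)} = 0$ and $\int_{\bar Q_c}\bar w\,\dd\bar u + \alpha\nnorm{\bar u}_{\cM(\bar Q_c)} = 0$. The pointwise statements then follow by the usual measure-theoretic argument: feasibility gives $\nabs{\bar w}\le\alpha$ on $\bar Q_c$ and $\nabs{\bar w(\cdot,0)}\le\beta$ on $\bar\varOmega_c$; combining this with the integral identity $\int_{\bar Q_c}\bar w\,\dd\bar u = -\alpha\nnorm{\bar u}_{\cM(\bar Q_c)} = -\alpha\int_{\bar Q_c}\dd\nabs{\bar u}$ and the Jordan decomposition of $\bar u$ forces $\bar w = -\alpha\,\frac{\dd\bar u}{\dd\nabs{\bar u}}$ $\nabs{\bar u}$-a.e., hence $\nabs{\bar w} = \alpha$ on $\supp(\bar u)$; the analogous argument handles $\bar u_0$.

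I expect the main subtlety to be the passage from the "$\nabs{\bar u}$-a.e." equality to an \emph{everywhere on $\supp(\bar u)$} statement, which needs continuity of $\bar w$ on $\bar Q_c$ — this is available because $\Morrey\bar w \in \cC(\bar\varOmega_c)\times\cC(\bar Q_c)$ by construction of $\Morrey$ in Section~2.1, so $\bar w|_{\bar Q_c}$ and $\bar w(0)|_{\bar\varOmega_c}$ are genuinely continuous representatives; the set $\{\nabs{\bar w} = \alpha\}$ is then closed and contains $\supp(\bar u)$. Everything else is bookkeeping: verifying the decoupling of $G^*$ into the $\varOmega_c$- and $Q_c$-components (already done in Theorem~\ref{thm:FenchelDuality} via \cite[Theorem~2.2.8]{Schiro}) and the exponent arithmetic $1/(p-1)=q-1$, $(q-1)p=q$. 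Alternatively, one can simply cite \cite[Theorem~3.1., Corollary~3.2.]{CasKun} and remark that \eqref{eq:P} is the special case of their problem obtained by restricting supports to $\bar\varOmega_c$ and $\bar Q_c$, which is the route the lemma's preamble already signals.
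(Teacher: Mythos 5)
Your proposal is correct and follows essentially the same route the paper takes: it derives the adjoint equation and the extremality/support conditions directly from the two Fenchel relations in \eqref{eq:RecoverControl} (with the explicit $G^*$ from Theorem~\ref{thm:fenchel}), which is exactly the derivation the paper sketches inside the lemma statement before deferring the details to \cite[Theorem 3.1., Corollary 3.2.]{CasKun}. The exponent arithmetic, the decoupling of the two norm terms, and the continuity argument upgrading the $\nabs{\bar u}$-a.e.\ equality to an equality on $\supp(\bar u)$ are all handled correctly.
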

\begin{remark} 
	Under the assumptions of the previous Theorem we have the following sparsity structure:
	\begin{align*}
		\supp(\bar{u}^+) &\subset \set{(x,t) \in \bar{Q}_c : \bar{w}(x,t)=-\alpha },
		&
		\supp(\bar{u}^-) &\subset \set{(x,t) \in \bar{Q}_c : \bar{w}(x,t)=+\alpha },\notag\\
		\supp(\bar{u}_0^+) &\subset \set{x \in \bar{\varOmega}_c : \bar{w}(x,0)=-\beta }, 
		&
		\supp(\bar{u}_0^-) &\subset \set{x \in \bar{\varOmega}_c : \bar{w}(x,0)=+\beta }, \notag	
	\end{align*}
	where $\bar{u}= \bar{u}^+ - \bar{u}^-$ and $\bar{u}_0= \bar{u}_0^+ - \bar{u}_0^-$ are the Jordan decompositions. Let us note that $\bar{w}$ is the adjoint in \eqref{eq:P}.
	\label{sparsity}
\end{remark}
If one considers it as the generic case that the function $\bar w$ is not constant on sets of measure greater than zero, 
the controls have support sets of measure zero. This is our motivation to propose a discretization strategy which reflects this behavior on the discrete level in space and time.

\section{Variational Discretization}
\label{sec:vardiscr}
Here we want to achieve the desired maximal discrete sparsity, i.e., Dirac-measures in space-time, by choosing the Petrov-Galerkin-ansatz and -test space that will induce this structure. The variational discretization concept was introduced in \cite{VD} and its key feature is to not discretize the control space. Instead, via the discretization of the test space and the optimality conditions, an implicit discretization of the control is achieved. This is how we \textit{control} the discrete structure of the controls. Looking at the relation \eqref{eq:upartialF} between the optimal adjoint state $\bar{w}$ and $\bar{\xi}=\Morrey^* (u_0,u)$, it is obvious, that the discrete structure of the test space affects the structure of the optimal controls  $(u_0,u) \in \mathcal{M}(\bar{\varOmega}_c) \times \mathcal{M}(\bar{Q}_c)$. This motivates the following choice of discrete spaces: We define the state space $\cY_{\sigma}$ consisting of continuous and piecewise linear functions in space and piecewise constant functions in time, whereas we define the test space $\cW_{\sigma}$ consisting of continuous and piecewise linear functions in space and time.

In the following we discretize \eqref{eq:P} and analyze the structure of the controls. \textcolor{Cn}{We will see that the above choice of discrete state and test spaces in combination with the optimality system of the discrete problem induces Dirac measures in space and time for the controls.} Afterwards, we discuss the existence and uniqueness of solutions to the variational discrete problem. We then prove the convergence properties stated in Theorem \ref{thm:DiscVarConvergence}. Afterwards we discretize \eqref{eq:Pdual}, reformulate the problem equivalently, and derive an optimality system by a Lagrange approach. 
If the necessary conditions are fulfilled, we can apply a semismooth Newton method to solve the optimality system. Utilizing Fenchel duality, we can finally calculate the optimal solution of \eqref{eq:P}.


As a first step to characterizing the discrete spaces, we have to set up the space-time grid. 
Define the partition $0 =t_0 < t_1 <\ldots <t_{N_{\tau}} =T$. 
The time interval $I$ is \textcolor{C}{decomposed in subintervals $I_k \coloneqq \intervaloc{t_{k-1},t_k}$ for $k=1,\ldots,N_{\tau-1}$ and $I_{N_{ \tau}}\coloneqq \intervaloo{t_{N_{\tau}-1},t_{N_{\tau}}}$. 
We point out that we defined the intervals $I_k$ such that they cover the full time interval $I$, which is crucial because we deal with measures that can be supported on isolated points.} 
The temporal grid size is denoted by $\tau = \max_{\textcolor{Mn}{1\leq k \leq N_{\tau}}} {\tau_k} $, where $\tau_k \coloneqq t_{k}-t_{k-1}$. 
Let $\mathcal{K}_h$ be a finite triangulation of $\varOmega$ with grid size $h= \max_{K \in \mathcal{K}_h} \diam(K) $. 
We set $\bar{\varOmega}_h \ceq \bigcup_{K \in \mathcal{K}_h}K$ and denote by $\varOmega_h$ the interior,
by $\varGamma_h$ the boundary of $\bar{\varOmega}_h$, and by $Q_h \coloneqq \varOmega_h \times I$ the discrete space-time domain. We assume that vertices on $\varGamma_h$ are points on $\varGamma$. 
\textcolor{C}{The interior vertices of $\mathcal{K}_h$ are denoted by $\{x_j\}_{j=1}^{N_h}$}. 
We combine the two discretization parameters $\tau$ and $h$ into the vector $\sigma = (\tau,h)$ and define the following discrete state and test spaces:
\textcolor{M1}{
\begin{align}
	\mathcal{Y}_{\sigma} 
	&\coloneqq 
	\Span \,\{ \SpaceNode_{x_j } \otimes \chi_k : 1\leq j \leq N_h \text{ and } 1\leq k\leq N_{\tau} \}, \label{eq:Ysigma}
	\\
	\cW_{\sigma} 
	&\coloneqq 
	\Span \, \{ \SpaceNode_{x_j } \otimes \TimeNode_{t_k} : 1\leq j \leq N_h \text{ and } 0\leq k\leq N_{\tau}-1 \}, \notag
\end{align}
}
\textcolor{Mn}{such that $w_{\sigma}(T) = 0$ for $w_{\sigma}\in \cW_{\sigma}$ is ensured.} Here, $(\SpaceNode_{x_1},\dotsc, \SpaceNode_{x_{N_h}})$ and \textcolor{Cn}{$(e_{t_0},\dotsc, e_{t_{N_\tau - 1}})$}
denote the nodal basis formed by continuous, piecewise linear functions on~$\varOmega_h$ and $\bar I$, respectively.
Moreover, $\chi_k$ denotes the indicator function of the time interval $I_k$.
We also define the space
\begin{align*}
	Y_h &\coloneqq \Span \,\{\SpaceNode_{x_j } : 1 \leq j \leq N_h\}, \label{eq:Yh}
\end{align*}
 
%
In order to set up the variational discrete state equation, we start by deriving a very weak formulation of \eqref{eq:PDE}, which will be discretized afterwards. By multiplication with $w \in W$, integration over the domain $Q$, and utilizing $w(x,T)=0$, we arrive at
\begin{align}
\textstyle
 \int_{Q} \bigparen{-y \, \partial_t w + \nabla y \nabla w} \, \dd x\, \dd t  
&= 
\textstyle
\int_{\bar{\varOmega}_c}{w(0) \, \dd u_{0}} + \int_{\bar{Q}_c}{w \, \dd u}.
\end{align} 
Inserting $y_{\sigma} \in \mathcal{Y}_{\sigma}$ and testing against all $w_{\sigma} \in \cW_{\sigma}$ yields the following variational discrete representation of the state equation: Find $y_{\sigma} \in \mathcal{Y}_{\sigma}$, such that 
\begin{equation}
\textstyle
 \int_{Q} \bigparen{-y_{\sigma} \, \partial_t w_{\sigma}+ \nabla y_{\sigma} \nabla w_{\sigma}} \, \dd x\, \dd t 
=  \int_{\bar{\varOmega}_c}{w_{\sigma}(0) \, \dd u_{0}}+\int_{\bar{Q}_c}{w_{\sigma} \, \dd u}  
\label{eq:dse2}
\end{equation}
holds for all $w_{\sigma} \in \cW_{\sigma}$.
This allows us to formulate the variational discrete problem
\begin{equation}
\min_{(u_{0},u) \in \mathcal{M}(\bar{\varOmega}_c) \times \mathcal{M}(\bar{Q}_c)} 
	J_{\sigma}(u_{0},u) 
	\ceq
	\tfrac{1}{q} \, \nnorm{y_{\sigma}(u_{0},u)-\DesiredState}_{L^q(Q_h)}^{q} 
	+ \alpha \, \nnorm{u}_{\mathcal{M}(\bar{Q}_c)}
	+ \beta \, \nnorm{u_{0}}_{\mathcal{M}(\bar{\varOmega}_c)}
\tag{$P_{\sigma}$},
\label{eq:Pvd}
\end{equation}
where $y_{\sigma}(u_{0},u)$ denotes the unique solution of \eqref{eq:dse2}.

%
%
%
We refrain from giving a detailed derivation for an optimality system and the sparsity structure for this problem as this would closely follow the procedure in the continuous setting (see Lemma~\ref{lem:optimalitycond} and Remark~\ref{sparsity}).
Instead, we focus on analyzing how the controls $(u_0,u)$ are implicitly discretized. First we define the following sets of indices:
\begin{equation*}
	\cI_{\sigma} \coloneqq \{(j,k) : (x_j,t_k) \in \bar{Q}_c \} 
	\qquad \textrm{and} \qquad 
	\cI_h \coloneqq \{j : x_j \in \bar{\varOmega}_c\}.
\end{equation*}
We may suppose that the space-time discretization is sufficiently fine so that $\cI_{\sigma}$ and $\cI_h$ are nonempty.
Utilizing these sets, we define the following discrete spaces:
\begin{align*}
	V_h \coloneqq \Span \, \{ \SpaceNode_{x_j }|_{\bar{\varOmega}_c} : j \in \cI_h \} 
	\quad
	\text{and}
	\quad
	\cV_{\sigma} \coloneqq \Span \, \{ (\SpaceNode_{x_j } \otimes \TimeNode_{t_k})|_{\bar{Q}_c} : (j,k) \in 	\cI_{\sigma}  \}.
\end{align*}
Notice that both spaces are spaces of continuous functions, i.e., we have
$V_h \subset \cC(\bar{\varOmega}_c)$ 
and
$\cV_{\sigma} \subset \cC(\bar{Q}_c)$.
The discrete version of the mapping $\Morrey$, decomposed into two parts, reads as follows:
\begin{alignat}{3}
\Morrey_{h} &: \cW_{\sigma} \rightarrow V_h, \quad &&\Morrey_{h}(w_{\sigma}) &&\coloneqq w_{\sigma}(0)|_{\bar{\varOmega}_c}, \label{eq:Morreyh}\\
\Morrey_{\sigma}&: \cW_{\sigma} \rightarrow \cV_{\sigma},\quad  &&\Morrey_{\sigma}(w_{\sigma}) &&\coloneqq w_{\sigma}|_{\bar{Q}_c}. \label{eq:Morreysigma}
\end{alignat}
We suppose that $\bar{\varOmega}_c$ is polygonal and that $\cK_{h}$ is an exact triangulations of $\bar{\varOmega}_c$, i.e., 
$\bar{\varOmega}_c = \bigcup_{K \in \cK_h : K \subset \bar{\varOmega}_c} K$ and similarly for $\bar{Q}_c$.


From the discrete optimality system (whose precise derivation has been omitted), we obtain:
\begin{alignat}{3}
\max_{j \in \cI_h} |\bar{w}_{j,0}| &= \nnorm {\bar{w}_{\sigma}(0)}_{\infty,\bar{\varOmega}_c} &&= \nnorm{\Morrey_{h}(\bar{w}_{\sigma}) }_{\infty,\bar{\varOmega}_c}   &&\leq  \beta, \label{eq:betanorm}\\
\max_{(j,k) \in \cI_{\sigma}} |\bar{w}_{j,k}| &= \nnorm{  \bar{w}_{\sigma} }_{\infty,\bar{Q}_c} &&= \nnorm{\Morrey_{\sigma}(\bar{w}_{\sigma}) }_{\infty,\bar{Q}_c}   &&\leq \alpha,\label{eq:alphanorm}
\end{alignat}
where the $\bar{w}_{j,k}$ denote the coefficients of the optimal adjoint variable $\bar{w}_{\sigma} \in \cW_{\sigma}$, i.e., 
$\bar{w}_{\sigma} = \sum_{j=1}^{N_h} \sum_{k=0}^{N_\tau-1} \bar{w}_{j,k} \, (\SpaceNode_{x_j } \otimes \TimeNode_{t_k})$.
Although we did not mention the mapping $\Morrey$ explicitly in the continuous optimality system,
we do so here for clarity.
As discrete sparsity structure, we obtain the following (see also Lemma~\ref{lem:optimalitycond})
\begin{align}
	\supp(\bar{u}_0^+) 
	&\subset \set{x \in \bar{\varOmega}_c : \bar{w}_{\sigma}(x,0)=-\beta }, 
	&\supp(\bar{u}_0^-) 
	&\subset \set{x \in \bar{\varOmega}_c : \bar{w}_{\sigma}(x,0)=+\beta }, 
	\notag
	\\
	\supp(\bar{u}^+) 
	&\subset \set{(x,t) \in \bar{Q}_c : \bar{w}_{\sigma}(x,t)=-\alpha }, 
	&\supp(\bar{u}^-) &\subset \set{(x,t) \in \bar{Q}_c : \bar{w}_{\sigma}(x,t)=+\alpha }. 
	\label{eq:DiscreteSparsityStructure}
\end{align}
By construction, the discrete adjoint state $\bar w_\sigma$ is piecewise linear, both in space and time.
Thus, $\varPhi_\sigma(\bar w_\sigma)$ and $\varPhi_h(\bar w_\sigma)$ attain their extremal values $\pm \alpha$ and $\pm \beta$ in the grid points contained in $\bar \varOmega_c$ and $\bar Q_c$, respectively.
Generically, $\varPhi_\sigma(\bar w_\sigma)$ and $\varPhi_h(\bar w_\sigma)$ attain their extrema \emph{only} in these grid points, in which case we have
\begin{equation*}
\supp(\bar{u}) \subset \set{(x_{j},t_k) : (j,k)\in \cI_{\sigma}} \qquad \textrm{and} \qquad \supp(\bar{u}_0) \subset \set{x_{j} : j \in \cI_h}.
\end{equation*}
This leads us in a natural way to the discrete control spaces
\begin{alignat}{2}
	U_h 
	&= \Span \, \{ \SpaceDelta_{x_j} : j \in \cI_h \} 
	&&
	\subset \cM(\bar{\varOmega }_c)
	\label{eq:Uh},
	\\
\mathcal{U}_{\sigma} 
	&= \Span \, \{ \SpaceDelta_{x_j} \otimes \TimeDelta_{t_k} : (j,k) \in \cI_{\sigma} \} 
	&&\subset \cM(\bar{Q}_c)   . 
	\label{eq:coeffvd}
\end{alignat}
Notice also that the natural pairings $\cM(\bar{\varOmega }_c) \times \cC(\bar{\varOmega }_c) \to \R$ and
$\cM(\bar{Q}_c) \times \cC(\bar Q_c) \to \R$
induce the \textcolor{M1}{dualities $V_h^* \cong U_h$ and $\cV_{\sigma}^* \cong \cU_{\sigma}$ in the discrete setting.}

Here we see the \textcolor{C}{effect} of the variational discretization concept:
\emph{The choice for the discretization of the test space induces a natural discretization for the controls.}

The following operators will be useful for the discussion of solutions to \eqref{eq:Pvd}: 
\begin{lemma}
	Let the linear operators $\varUpsilon_h$ and $\varPi_h$ be defined as below:
	\begin{alignat}{5}
	\varUpsilon_h &: \cM(\bar{\varOmega}_c) &&\rightarrow U_h &&\subset \cM(\bar{\varOmega}_c), \qquad &&\varUpsilon_h\, u_0 &&\coloneqq \textstyle \sum\limits_{j \in \cI_h} \updelta_{x_j} \int_{\bar{\varOmega}_c} \SpaceNode_{x_j } \dd u_0\notag 
	\\
	\varPi_h &: \cC(\bar{\varOmega}_c) &&\rightarrow V_h &&\subset \cC(\bar{\varOmega}_c), \qquad &&\varPi_h \,f_0 &&\coloneqq \textstyle \sum\limits_{j \in \cI_h} f_0(x_j)\, \SpaceNode_{x_j } \notag
	\end{alignat}
	Then for every $u_0 \in\cM(\bar{\varOmega}_c), f_0 \in \cC(\bar{\varOmega}_c)$ and $v_h \in V_h$ the following properties hold.
	\begin{alignat}{2}
	&\ninnerprod{ u_0, v_h } &&= \ninnerprod{ \varUpsilon_h \,u_0,v_h }, \label{eq:4.3}\\
	&\ninnerprod{ u_0, \varPi_h\, f_0 } &&= \ninnerprod{ \varUpsilon_h\, u_0, f_0 }, \label{eq:4.4}\\
	&\nnorm{\varUpsilon_h\, u_0}_{\mathcal{M}(\bar{\varOmega}_c)} &&\leq \nnorm{u_0}_{\mathcal{M}(\bar{\varOmega}_c)}, \label{eq:4.5}\\
	& \varUpsilon_h \,u_0 \stackrel{*}{\rightharpoonup} u_0 \; &&\in \; \mathcal{M}(\bar{\varOmega}_c) \quad \textrm{ and }\quad \nnorm{\varUpsilon_h\, u_0}_{\mathcal{M}(\bar{\varOmega}_c)} \stackrel{h \rightarrow 0}{\longrightarrow} \nnorm{u_0}_{\mathcal{M}(\bar{\varOmega}_c)} .\label{eq:4.6}
	\end{alignat}
\end{lemma}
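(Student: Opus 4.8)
The plan is to establish the four assertions in the order \eqref{eq:4.3}, \eqref{eq:4.4}, \eqref{eq:4.5}, \eqref{eq:4.6}, each using the earlier ones. The first two are purely algebraic consequences of the nodal-basis identity $\SpaceNode_{x_i}(x_j)=\kroneckerdelta_{ij}$: every $v_h\in V_h$ equals $\sum_{j\in\cI_h}v_h(x_j)\,\SpaceNode_{x_j}|_{\bar\varOmega_c}$, so by linearity of the pairing $\ninnerprod{u_0,\cdot}$ both $\ninnerprod{\varUpsilon_h u_0,v_h}$ and $\ninnerprod{u_0,v_h}$ collapse to $\sum_{j\in\cI_h}v_h(x_j)\int_{\bar\varOmega_c}\SpaceNode_{x_j}\,\dd u_0$, which gives \eqref{eq:4.3}; and since $\varPi_h f_0=\sum_{j\in\cI_h}f_0(x_j)\,\SpaceNode_{x_j}$ by definition, both sides of \eqref{eq:4.4} reduce to $\sum_{j\in\cI_h}f_0(x_j)\int_{\bar\varOmega_c}\SpaceNode_{x_j}\,\dd u_0$.

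For the contraction property \eqref{eq:4.5} I would use that $\varUpsilon_h u_0$ is a finite linear combination of Dirac masses at the pairwise distinct grid points $x_j$, $j\in\cI_h$, so its total variation is the $\ell^1$-norm of the coefficients, $\nnorm{\varUpsilon_h u_0}_{\cM(\bar\varOmega_c)}=\sum_{j\in\cI_h}\bigabs{\int_{\bar\varOmega_c}\SpaceNode_{x_j}\,\dd u_0}$. Because the $P_1$ hat functions are nonnegative, each term is bounded by $\int_{\bar\varOmega_c}\SpaceNode_{x_j}\,\dd\abs{u_0}$; summing and invoking the partition-of-unity bound $\sum_{j\in\cI_h}\SpaceNode_{x_j}\le1$ on $\bar\varOmega_c$ turns the sum into $\int_{\bar\varOmega_c}\bigparen{\sum_{j\in\cI_h}\SpaceNode_{x_j}}\dd\abs{u_0}\le\abs{u_0}(\bar\varOmega_c)=\nnorm{u_0}_{\cM(\bar\varOmega_c)}$.

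For \eqref{eq:4.6}, weak-$*$ convergence would follow from \eqref{eq:4.4}: for $f\in\cC(\bar\varOmega_c)$ one has $\ninnerprod{\varUpsilon_h u_0,f}=\int_{\bar\varOmega_c}\varPi_h f\,\dd u_0$, and it remains to show $\varPi_h f\to f$ uniformly on $\bar\varOmega_c$. This is where the standing assumption that $\bar\varOmega_c$ is polygonal and exactly triangulated by $\cK_h$ enters: it guarantees that every simplex covering part of $\bar\varOmega_c$ has all its vertices among $\{x_j:j\in\cI_h\}$ (these are interior vertices of $\varOmega$ since $\bar\varOmega_c\subset\subset\varOmega$), so on $\bar\varOmega_c$ the operator $\varPi_h$ coincides with the standard $P_1$ nodal interpolant; on each simplex its value is a convex combination of vertex values of $f$, whence $\nnorm{f-\varPi_h f}_{\cC(\bar\varOmega_c)}$ is bounded by the oscillation of $f$ over a set of diameter at most $h$, which tends to $0$ by uniform continuity of $f$ on the compact set $\bar\varOmega_c$. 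Since $\abs{u_0}(\bar\varOmega_c)<\infty$, this uniform convergence gives $\int_{\bar\varOmega_c}\varPi_h f\,\dd u_0\to\int_{\bar\varOmega_c}f\,\dd u_0=\ninnerprod{u_0,f}$, i.e.\ $\varUpsilon_h u_0\stackrel{*}{\rightharpoonup}u_0$. The norm convergence then comes for free: \eqref{eq:4.5} yields $\limsup_{h\to0}\nnorm{\varUpsilon_h u_0}_{\cM(\bar\varOmega_c)}\le\nnorm{u_0}_{\cM(\bar\varOmega_c)}$, while testing against functions $f$ with $\nnorm{f}_{\cC(\bar\varOmega_c)}\le1$ and passing to the limit gives the weak-$*$ lower semicontinuity estimate $\nnorm{u_0}_{\cM(\bar\varOmega_c)}\le\liminf_{h\to0}\nnorm{\varUpsilon_h u_0}_{\cM(\bar\varOmega_c)}$; together these force equality.

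The only step that is not a formal manipulation is the uniform convergence $\varPi_h f\to f$ up to $\partial\varOmega_c$, and this is exactly where exactness of the triangulation is indispensable: otherwise simplices straddling $\partial\varOmega_c$ would contribute vertices outside $\cI_h$, the local sum $\sum_{j\in\cI_h}\SpaceNode_{x_j}$ would drop below $1$ in an $O(h)$ boundary layer, and $\varPi_h f$ would fail to reproduce $f$ there. Everything else is bookkeeping with the nodal basis and the elementary formula for the total variation of an atomic measure.
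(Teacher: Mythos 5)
Your proposal is correct, but it is not what the paper does for this particular lemma: the paper disposes of it in one line by citing Proposition~4.1 of \cite{CasKun} and restricting that result to $\bar{\varOmega}_c \subset \varOmega$, reserving a written-out argument only for the space-time analogue ($\varUpsilon_{\sigma}$, $\varPi_{\sigma}$). Your self-contained proof follows essentially the same template as that space-time proof — nodal-basis bookkeeping for \eqref{eq:4.3}--\eqref{eq:4.4}, the bound $\sum_{j\in\cI_h}\SpaceNode_{x_j}\leq 1$ together with the $\ell^1$ formula for the total variation of an atomic measure for \eqref{eq:4.5}, and the combination of \eqref{eq:4.4}, $\varPi_h f\to f$ in $\cC(\bar{\varOmega}_c)$, and weak-$*$ lower semicontinuity for \eqref{eq:4.6} — but with two small improvements. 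First, you obtain $\varUpsilon_h u_0\stackrel{*}{\rightharpoonup} u_0$ directly, since $\ninnerprod{\varUpsilon_h u_0,f}=\ninnerprod{u_0,\varPi_h f}\to\ninnerprod{u_0,f}$ for every test function, whereas the paper's space-time proof first extracts a weak-$*$ convergent subsequence by boundedness and only then identifies the limit; your shortcut is legitimate because weak-$*$ convergence is nothing but convergence of all these pairings. Second, you actually justify the uniform convergence $\varPi_h f\to f$ on $\bar{\varOmega}_c$ (convex combinations of vertex values on each simplex plus uniform continuity), correctly pinpointing that the standing assumption that $\bar{\varOmega}_c$ is polygonal and exactly triangulated is what keeps all relevant vertices inside $\cI_h$ so that $\varPi_h$ reproduces the nodal interpolant up to $\partial\varOmega_c$; the paper, following \cite{CasKun}, takes this interpolation fact for granted. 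So what your route buys is independence from the cited reference and an explicit record of where the exact-triangulation hypothesis enters; what the paper's route buys is brevity.
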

These results follow directly from restricting \cite[Proposition 4.1.]{CasKun} to $\bar{\varOmega}_c \subset \varOmega$.

We derive also an analogous result for the space-time discrete spaces $\cU_{\sigma} $ and $\cV_{\sigma}$, which is similar to \cite[Proposition 4.2.]{CasKun}, but adjusted to our choice of spaces. The structure of the proof remains the same, only technical calculations are different.
\begin{lemma}
	Let the linear operators $\varUpsilon_{\sigma}$ and $\varPi_{\sigma}$ be defined as below:
	\begin{alignat}{5}
	\varUpsilon_{\sigma} &: \mathcal{M}(\bar{Q}_c) &&\rightarrow \mathcal{U}_{\sigma}  &&\subset \mathcal{M}(\bar{Q}_c), \qquad && \varUpsilon_{\sigma}\,u &&\coloneqq \textstyle \sum\limits_{(j,k)\in \cI_{\sigma}}   \SpaceDelta_{x_j} \otimes \TimeDelta_{t_k} \, \int_{\bar{Q}_c}{\SpaceNode_{x_j } \otimes \TimeNode_{t_k} \, \dd u}    \notag \\
	\varPi_{\sigma} &: \cC(\bar{Q}_c) &&\rightarrow \cV_{\sigma} &&\subset \cC(\bar{Q}_c) , \qquad && \varPi_{\sigma}\, f &&\coloneqq \textstyle \sum\limits_{(j,k)\in\cI_{\sigma}}{{ f(x_j,t_k) \, (\SpaceNode_{x_j} \otimes \TimeNode_{t_k} ) }} \notag
	\end{alignat}
	Then for every $u \in\cM(\bar{Q}_c), f \in \cC(\bar{Q}_c)$ and  $v_{\sigma} \in \mathcal{V}_{\sigma}$ the following properties hold.
	\begin{alignat}{2}
	&\ninnerprod{ u, v_{\sigma} } 
	&&= \ninnerprod{ \varUpsilon_{\sigma}\, u ,v_{\sigma}}, \label{eq:4.8}\\
	&\ninnerprod{ u, \varPi_{\sigma}\, f } 
	&&= \ninnerprod{ \varUpsilon_{\sigma}\, u, f }, \label{eq:4.9}\\
	&\nnorm{\varUpsilon_{\sigma} \,u}_{\mathcal{M}(\bar{Q}_c)} 
	&&\leq \nnorm{u}_{\mathcal{M}(\bar{Q}_c)}, \label{eq:4.10}\\
	& \varUpsilon_{\sigma} \,u \stackrel{*}{\rightharpoonup} u \; 
	&&\in \; \mathcal{M}(\bar{Q}_c) 
	\quad \textrm{ and }\quad 
	\nnorm{\varUpsilon_{\sigma}\, u}_{\mathcal{M}(\bar{Q}_c)} \stackrel{\nabs{\sigma} \rightarrow 0}{\longrightarrow} \nnorm{u}_{\mathcal{M}(\bar{Q}_c)} \label{eq:4.11}
	\end{alignat}
\end{lemma}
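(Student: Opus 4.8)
The plan is to mirror the proof of the spatial result (the previous lemma, which quotes \cite[Proposition 4.1.]{CasKun}), adapting the tensor-product bookkeeping to our Petrov--Galerkin spaces $\cU_\sigma$ and $\cV_\sigma$. The key observation is that $\varUpsilon_\sigma$ is exactly the adjoint of $\varPi_\sigma$ under the natural dualities: the functions $\SpaceNode_{x_j}\otimes\TimeNode_{t_k}$ for $(j,k)\in\cI_\sigma$ form a basis of $\cV_\sigma$, and the Dirac masses $\SpaceDelta_{x_j}\otimes\TimeDelta_{t_k}$ are, up to the usual correspondence, the evaluation functionals at the grid nodes. I would first record that $\varUpsilon_\sigma$ has image in $\cU_\sigma\subset\cM(\bar Q_c)$ and $\varPi_\sigma$ has image in $\cV_\sigma\subset\cC(\bar Q_c)$, and that both are clearly linear and well-defined.

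Next I would prove \eqref{eq:4.9}, since it encapsulates the adjointness and the other algebraic identities follow from it. For $f\in\cC(\bar Q_c)$ one has $\varPi_\sigma f=\sum_{(j,k)\in\cI_\sigma} f(x_j,t_k)\,(\SpaceNode_{x_j}\otimes\TimeNode_{t_k})$, so
\begin{equation*}
\ninnerprod{u,\varPi_\sigma f}=\sum_{(j,k)\in\cI_\sigma} f(x_j,t_k)\int_{\bar Q_c}\SpaceNode_{x_j}\otimes\TimeNode_{t_k}\,\dd u=\ninnerprod{\varUpsilon_\sigma u,f},
\end{equation*}
using that $\ninnerprod{\SpaceDelta_{x_j}\otimes\TimeDelta_{t_k},f}=f(x_j,t_k)$. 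Then \eqref{eq:4.8} is the special case $f=v_\sigma\in\cV_\sigma$ combined with the observation that $\varPi_\sigma$ is a projection, i.e.\ $\varPi_\sigma v_\sigma=v_\sigma$; this holds because the nodal basis functions satisfy $(\SpaceNode_{x_i}\otimes\TimeNode_{t_l})(x_j,t_k)=\kroneckerdelta_{ij}\kroneckerdelta_{lk}$ on the grid nodes, so evaluating $v_\sigma$ at the nodes and re-expanding recovers $v_\sigma$. For \eqref{eq:4.10}, I would compute the total variation of $\varUpsilon_\sigma u$ directly: since $\varUpsilon_\sigma u$ is a finite combination of Dirac masses at distinct points $(x_j,t_k)$, its total variation norm is $\sum_{(j,k)\in\cI_\sigma}\abs{\int_{\bar Q_c}\SpaceNode_{x_j}\otimes\TimeNode_{t_k}\,\dd u}$; the key point is that $\{\SpaceNode_{x_j}\otimes\TimeNode_{t_k}\}$ is a partition of unity (each factor being a nodal basis summing to $1$, and the products of two partitions of unity again sum to $1$) consisting of nonnegative functions, hence $\sum_{(j,k)}\abs{\SpaceNode_{x_j}\otimes\TimeNode_{t_k}}\le 1$ pointwise on $\bar Q_c$, and the triangle inequality for the integral against $\abs{u}$ gives the bound by $\nnorm{u}_{\cM(\bar Q_c)}$.

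Finally, for the convergence statements \eqref{eq:4.11}: weak-$*$ convergence $\varUpsilon_\sigma u\stackrel{*}{\rightharpoonup} u$ follows from testing against an arbitrary $f\in\cC(\bar Q_c)$ and using \eqref{eq:4.9} to rewrite $\ninnerprod{\varUpsilon_\sigma u,f}=\ninnerprod{u,\varPi_\sigma f}$; since $\varPi_\sigma f$ is the nodal interpolant of $f$ on the space-time grid, uniform continuity of $f$ on the compact set $\bar Q_c$ gives $\varPi_\sigma f\to f$ uniformly as $\nabs{\sigma}\to 0$ (here one uses that $\cK_h$ triangulates $\bar Q_c$ exactly, so no boundary-layer issues arise), and then $\ninnerprod{u,\varPi_\sigma f}\to\ninnerprod{u,f}$ because $u$ is a bounded functional. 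Norm convergence then follows from weak-$*$ lower semicontinuity of the total variation norm, which gives $\nnorm{u}_{\cM(\bar Q_c)}\le\liminf\nnorm{\varUpsilon_\sigma u}_{\cM(\bar Q_c)}$, combined with the uniform upper bound \eqref{eq:4.10} giving $\limsup\nnorm{\varUpsilon_\sigma u}_{\cM(\bar Q_c)}\le\nnorm{u}_{\cM(\bar Q_c)}$.

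The main obstacle is the uniform approximation $\varPi_\sigma f\to f$: one must verify that the space-time nodal interpolation operator $\varPi_\sigma$ is uniformly bounded (which is immediate from the partition-of-unity property, giving operator norm $1$) and that it reproduces constants and converges pointwise, so that a standard density/equicontinuity argument upgrades pointwise to uniform convergence on the compact set $\bar Q_c$. The tensor-product structure must be handled carefully here: the interpolant in space and time is the composition of a spatial nodal interpolant and a temporal one, and one estimates $\nnorm{f-\varPi_\sigma f}_\infty$ by splitting into the spatial and temporal interpolation errors, each controlled by the modulus of continuity of $f$ evaluated at $h$ and $\tau$ respectively. Everything else is routine and parallels \cite[Proposition 4.2.]{CasKun}, the only genuine difference being that our $\TimeNode_{t_k}$ are continuous piecewise linear in time (rather than piecewise constant), which if anything simplifies the partition-of-unity and interpolation arguments.
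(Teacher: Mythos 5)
Your proposal is correct and follows essentially the same route as the paper: the adjointness identity \eqref{eq:4.9} (with the nodal-basis/projection property giving \eqref{eq:4.8}), the partition-of-unity bound for \eqref{eq:4.10}, and then $\varPi_\sigma f \to f$ in $\cC(\bar Q_c)$ combined with weak-$*$ lower semicontinuity and \eqref{eq:4.10} for \eqref{eq:4.11}. The only cosmetic difference is that you prove weak-$*$ convergence of the whole sequence directly by testing against arbitrary $f$, whereas the paper first extracts a weak-$*$ convergent subsequence and identifies its limit; both arguments are fine.
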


\textcolor{M1}{
	\begin{proof}
		We verify \eqref{eq:4.8} by the following calculation: 
		\begin{alignat}{2}
		\left\langle u, v_{\sigma}\right\rangle &= \textstyle\int_{\bar{Q}_c}{ \sum\limits_{(j,k)\in \cI_{\sigma}} {v_{j,k}\, (\SpaceNode_{x_j }\otimes \TimeNode_{t_k})  } \, \dd u } 
		\notag\\
		&= \textstyle\sum\limits_{(j,k)\in \cI_{\sigma}} v_{\sigma}(x_j,t_k)  \int_{\bar{Q}_c} \SpaceNode_{x_j }\otimes \TimeNode_{t_k} \, \dd u \notag\\
		&= \textstyle\sum\limits_{(j,k)\in \cI_{\sigma}} \langle \SpaceDelta_{x_j} \otimes \TimeDelta_{t_k}, v_{\sigma}\rangle  \int_{\bar{Q}_c} \SpaceNode_{x_j }\otimes \TimeNode_{t_k} \, \dd u \; &&= \left\langle \varUpsilon_{\sigma} \, u, v_{\sigma}\right\rangle \notag
		\end{alignat}
		Equation \eqref{eq:4.9} can be seen by another short calculation:
		\begin{equation*}
		\left\langle u, \varPi_{\sigma}\, f\right\rangle = 
		\textstyle
		\int_{\bar{Q}_c} \textstyle\sum\limits_{(j,k)\in \cI_{\sigma}}f(x_j,t_k) \, (\SpaceNode_{x_j} \otimes \TimeNode_{t_k} ) \, \dd u = \textstyle\sum\limits_{(j,k)\in \cI_{\sigma}} \langle \SpaceDelta_{x_j} \otimes \TimeDelta_{t_k}, v_{\sigma}\rangle  \int_{\bar{Q}_c} \SpaceNode_{x_j }\otimes \TimeNode_{t_k} \, \dd u 
		\end{equation*}
		Inequality \eqref{eq:4.10} is obtained as follows:
		\begin{align*}
		\left\|\varUpsilon_{\sigma}\,u\right\|_{\mathcal{M}(\bar{Q}_c)} 
		&= 
		\big\| 
			\textstyle \sum\limits_{(j,k)\in \cI_{\sigma}}   \SpaceDelta_{x_j} \otimes \TimeDelta_{t_k} \, \int_{\bar{Q}_c}{\SpaceNode_{x_j } \otimes \TimeNode_{t_k} \, \dd u}  \, 
		\big\|_{\mathcal{M}(\bar{Q}_c)} \\
		&\leq  
		\textstyle\sum\limits_{(j,k)\in \cI_{\sigma}} \underbrace{\|\SpaceDelta_{x_j } \otimes \TimeDelta_{t_k} \|_{\mathcal{M}(\bar{Q}_c)}}_{=1}  \int_{\bar{Q}_c}{\SpaceNode_{x_j } \otimes \TimeNode_{t_k} \, \dd |u|}
		\leq \int_{\bar{Q}_c} \dd |u|  = \left\|u\right\|_{\mathcal{M}(\bar{Q}_c)}
		\end{align*}
		We deduce that there exists a subsequence, denoted in the same way, such that 
		\begin{equation*}
		\varUpsilon_{\sigma}\,u \stackrel{*}{\rightharpoonup} \bar{u} \; \in \mathcal{M}(\bar{Q}_c) \; \textrm{as} \; \left|\sigma\right|\rightarrow 0.
		\end{equation*}
		For any $f \in \cC(\bar{Q}_c)$, it holds that $\varPi_{\sigma}\,f \rightarrow f \in \cC(\bar{Q}_c)$ as $\left|\sigma\right|\rightarrow 0$. We derive
		\begin{equation*}
		\left\langle \bar{u},f\right\rangle = \lim_{\left|\sigma\right|\rightarrow 0} \left\langle \varUpsilon_{\sigma}\, u, f\right\rangle \stackrel{\eqref{eq:4.9}}{=} \lim_{\left|\sigma\right|\rightarrow 0} \left\langle  u,\varPi_{\sigma}\, f\right\rangle = \left\langle u,f\right\rangle.
		\end{equation*}
		This shows $\bar{u} = u$ and $\varUpsilon_{\sigma}\,u \stackrel{*}{\rightharpoonup} u$ for the whole sequence and hence
		\begin{equation*}
		\left\|u\right\|_{\mathcal{M}(\bar{Q}_c)} \leq \liminf_{\left|\sigma\right|\rightarrow 0} \left\|\varUpsilon_{\sigma}\,u\right\|_{\mathcal{M}(\bar{Q}_c)} \stackrel{\eqref{eq:4.10}}{\leq} \left\|u\right\|_{\mathcal{M}(\bar{Q}_c)} .
		\end{equation*}  
	\end{proof}
}

Next, we observe that $J_{\sigma}$ is convex, but not strictly convex. \textcolor{Mn}{In the continuous setting,} the strict convexity of $J$ was caused by the norm $\nnorm{\cdot}_{L^q(Q)}$ for $q>1$ and the injectivity of $\adjheat^{-*}\Morrey^*$. \textcolor{C}{Here we have the discrete operator $\adjheat_{\sigma}^*: \cY_{\sigma} \rightarrow \cW_{\sigma}^* $, defined as $\langle \adjheat_{\sigma}^* y_{\sigma}, w_{\sigma}\rangle \coloneqq \int_{Q}(- y_{\sigma} \, \partial_t w_{\sigma} +  \nabla y_{\sigma}\nabla w_{\sigma}) \, \dd x \, \dd t$, with $\adjheat_{\sigma}: \cW_{\sigma} \rightarrow \cY_{\sigma}^*$. We can rewrite the discrete state equation \eqref{eq:dse2} as 
\begin{equation}
 y_{\sigma}(u_0,u) = \adjheat_{\sigma}^{-*}( \Morrey_{h}^* \varUpsilon_h u_0 + \Morrey_{\sigma}^* \varUpsilon_{\sigma} u).
 \label{eq:ysigma(u)}
\end{equation}
}
The mapping $(u_0,u) \mapsto y_{\sigma}(u_0,u)$ is in general not injective, hence the uniqueness of the solution cannot be concluded. In the implicitly discrete setting however, we can prove uniqueness similarly as done in \cite[Section 4.3.]{CasasClasonKunisch}.
%
\begin{theorem}
	\label{thm:exandproj}
	The problem \eqref{eq:Pvd} has at least one solution in $\mathcal{M}(\bar{\varOmega}_c) \times \mathcal{M}(\bar{Q}_c) $ and there exists a unique solution $\nparen{ \bar{u}_{0,h},\bar{u}_{\sigma}} \in U_h \times \mathcal{U}_{\sigma} $. Furthermore we know for every solution $\nparen{\hat{u}_{0}, \hat{u}}\in \mathcal{M}(\bar{\varOmega}_c) \times \mathcal{M}(\bar{Q}_c) $ of \eqref{eq:Pvd} that $\nparen{ \varUpsilon_h\, \hat{u}_{0},\varUpsilon_{\sigma} \, \hat{u} }  = \nparen{ \bar{u}_{0,h},\bar{u}_{\sigma} }$.
\end{theorem}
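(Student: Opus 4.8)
The plan is to reduce the minimization to the finite-dimensional space $U_h\times\cU_\sigma$, to settle existence and uniqueness there, and to transfer everything back via the operators $\varUpsilon_h,\varUpsilon_\sigma$. First I would show that the minimization of $J_\sigma$ may be restricted to $U_h\times\cU_\sigma$ without loss: for arbitrary $(u_0,u)\in\cM(\bar{\varOmega}_c)\times\cM(\bar{Q}_c)$ the right-hand side of the discrete state equation \eqref{eq:dse2}, tested against $w_\sigma\in\cW_\sigma$, equals $\ninnerprod{u_0,\Morrey_h w_\sigma}+\ninnerprod{u,\Morrey_\sigma w_\sigma}$ with $\Morrey_h w_\sigma\in V_h$ and $\Morrey_\sigma w_\sigma\in\cV_\sigma$, so by \eqref{eq:4.3} and \eqref{eq:4.8} it does not change when $(u_0,u)$ is replaced by $(\varUpsilon_h u_0,\varUpsilon_\sigma u)$; by uniqueness of the discrete state this gives $y_\sigma(u_0,u)=y_\sigma(\varUpsilon_h u_0,\varUpsilon_\sigma u)$. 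Since \eqref{eq:4.5} and \eqref{eq:4.10} show that the two penalty terms cannot increase under this replacement, $J_\sigma(\varUpsilon_h u_0,\varUpsilon_\sigma u)\le J_\sigma(u_0,u)$; in particular $\inf_{\cM(\bar{\varOmega}_c)\times\cM(\bar{Q}_c)}J_\sigma=\inf_{U_h\times\cU_\sigma}J_\sigma$, and the image of any solution of \eqref{eq:Pvd} under $(\varUpsilon_h,\varUpsilon_\sigma)$ is again a solution, now lying in $U_h\times\cU_\sigma$.

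On the finite-dimensional space $U_h\times\cU_\sigma$ the functional $J_\sigma$ is continuous, convex and coercive (the two penalty terms alone dominate $\nnorm{u}_{\cM(\bar{Q}_c)}+\nnorm{u_0}_{\cM(\bar{\varOmega}_c)}$), hence it attains a minimum at some $(\bar u_{0,h},\bar u_\sigma)\in U_h\times\cU_\sigma$, which by the reduction is a global minimizer over $\cM(\bar{\varOmega}_c)\times\cM(\bar{Q}_c)$; this settles the two existence statements. For uniqueness --- the step I expect to be the real obstacle --- I would show that $J_\sigma$ is \emph{strictly} convex on $U_h\times\cU_\sigma$. The penalty terms are only convex, so everything rests on the fidelity term, and for that it suffices to prove that $(u_0,u)\mapsto y_\sigma(u_0,u)$ is injective on $U_h\times\cU_\sigma$. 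Since $\SpaceNode_{x_j}(x_i)=\updelta_{ij}$ and $\TimeNode_{t_k}(t_l)=\updelta_{kl}$, the maps $\varUpsilon_h$ and $\varUpsilon_\sigma$ act as the identity on $U_h$ and $\cU_\sigma$, so by \eqref{eq:ysigma(u)}, $\adjheat_\sigma^{-*}$ being an isomorphism, it is enough that $(u_0,u)\mapsto\Morrey_h^* u_0+\Morrey_\sigma^* u\in\cW_\sigma^*$ be injective there. Writing $u_0=\sum_{j\in\cI_h}a_j\,\updelta_{x_j}$ and $u=\sum_{(j,k)\in\cI_\sigma}b_{j,k}\,\updelta_{x_j}\otimes\updelta_{t_k}$, pairing $\Morrey_h^* u_0+\Morrey_\sigma^* u$ with the basis function $\SpaceNode_{x_i}\otimes\TimeNode_{t_0}\in\cW_\sigma$ picks out $a_i$, the contribution of $\Morrey_\sigma^* u$ vanishing because every $(j,k)\in\cI_\sigma$ has $t_k\in\bar{I}_c\subset I=(0,T)$ and hence $k\ge 1$; thus, if this combined map is zero, then $u_0=0$, and pairing the remaining $\Morrey_\sigma^* u$ with $\SpaceNode_{x_i}\otimes\TimeNode_{t_l}$ for $(i,l)\in\cI_\sigma$ yields $b_{i,l}=0$, so $u=0$. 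With this injectivity, $(u_0,u)\mapsto\tfrac{1}{q}\nnorm{y_\sigma(u_0,u)-\DesiredState}_{L^q(Q_h)}^q$ is the composition of an injective affine map with the strictly convex functional $v\mapsto\tfrac{1}{q}\nnorm{v}_{L^q(Q_h)}^q$ (recall that $L^q(Q_h)$ is strictly convex for $q>1$), hence strictly convex on $U_h\times\cU_\sigma$; adding the convex penalty terms keeps $J_\sigma|_{U_h\times\cU_\sigma}$ strictly convex, so $(\bar u_{0,h},\bar u_\sigma)$ is its unique minimizer.

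Finally, if $(\hat u_0,\hat u)$ is any solution of \eqref{eq:Pvd}, then by the reduction step $(\varUpsilon_h\hat u_0,\varUpsilon_\sigma\hat u)$ is a solution lying in $U_h\times\cU_\sigma$, and uniqueness there forces $(\varUpsilon_h\hat u_0,\varUpsilon_\sigma\hat u)=(\bar u_{0,h},\bar u_\sigma)$, which is the last assertion. The crux of the whole argument is the injectivity of the discrete control-to-state map on $U_h\times\cU_\sigma$, and the one structural fact that makes it go through is that $\bar{Q}_c$ is separated from the initial time slice $\{t=0\}$, so that the time-zero degrees of freedom of the discrete test functions are coupled only to $u_0$, never to $u$.
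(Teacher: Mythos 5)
Your proof is correct, and its skeleton is the paper's: the projection $(\varUpsilon_h,\varUpsilon_\sigma)$ leaves the discrete state unchanged (by \eqref{eq:4.3}, \eqref{eq:4.8}) and does not increase the penalty terms (by \eqref{eq:4.5}, \eqref{eq:4.10}); injectivity of the discrete control-to-state map on $U_h\times\cU_{\sigma}$ yields strict convexity of $J_\sigma$ there and hence uniqueness; and every solution's projection is a solution in $U_h\times\cU_\sigma$, so all projections coincide with the unique discrete minimizer. You deviate in two places, both defensibly and to your advantage. For existence, the paper obtains a solution in $\cM(\bar{\varOmega}_c)\times\cM(\bar{Q}_c)$ by repeating the Fenchel-duality argument of Theorem~\ref{thm:PisUniquelySolvable} and then projecting; you instead minimize $J_\sigma$ directly on the finite-dimensional space $U_h\times\cU_\sigma$ (continuity plus coercivity supplied by the penalty terms) and use the reduction inequality $J_\sigma(\varUpsilon_h u_0,\varUpsilon_\sigma u)\le J_\sigma(u_0,u)$ to conclude that this minimizer is global, which is more elementary and settles both existence claims at once. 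Second, the injectivity of $(\Morrey_h,\Morrey_\sigma)^*$ on $U_h\times\cU_\sigma$, which the paper merely asserts alongside the dimension count $\dim\cY_\sigma=\dim\cW_\sigma^*$, you actually prove by pairing with the nodal basis $\SpaceNode_{x_i}\otimes\TimeNode_{t_l}$ of $\cW_\sigma$, and you correctly isolate the structural fact that makes the argument work: since $\bar I_c\subset(0,T)$, no index $(j,0)$ (nor $(j,N_\tau)$) belongs to $\cI_\sigma$, so the $t_0$-degrees of freedom of the test functions couple only to $u_0$ and the coefficients of $u_0$ and $u$ can be read off separately. Both your proof and the paper's rely tacitly on the unique solvability of the discrete state equation \eqref{eq:dse2} (invertibility of $\adjheat_\sigma^*$, implicit in \eqref{eq:ysigma(u)}), so you assume nothing beyond what the paper does.
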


\begin{proof}
	The existence of solutions can be derived as in the proof of Theorem~\ref{thm:PisUniquelySolvable} because the control domain is still continuous. 
	
	Let $\nparen{ \hat{u}_{0}, \hat{u} } \in \mathcal{M}(\bar{\varOmega}_c) \times \mathcal{M}(\bar{Q}_c) $ be a solution of problem \eqref{eq:Pvd} and let $\nparen{ \bar{u}_{0,h},\bar{u}_{\sigma}} \coloneqq \nparen{ \varUpsilon_h\, \hat{u}_{0},\varUpsilon_{\sigma} \, \hat{u} } \in U_h \times \mathcal{U}_{\sigma} $.  
	We deduce from \eqref{eq:4.3} and \eqref{eq:4.8} that 
	\begin{equation}
	\textstyle
	y_{\sigma}(u_0,u) = y_{\sigma}(\varUpsilon_h u_0, \varUpsilon_{\sigma} u)
	\quad 
	\text{for all $(u_0,u) \in \mathcal{M}(\bar{\varOmega}_c) \times \mathcal{M}(\bar{Q}_c)$.}
	\label{eq:ygleich}
	\end{equation}
	Additionally, \eqref{eq:4.5} and \eqref{eq:4.10} deliver $\lVert \bar{u}_{0,h} \rVert_{\mathcal{M}(\bar{\varOmega}_c)} \leq \nnorm{\hat{u}_{0}}_{\mathcal{M}(\bar{\varOmega}_c)}$ and $\nnorm{\bar{u}_{ \sigma}}_{\mathcal{M}(\bar{Q}_c)} \leq \nnorm{\hat{u}}_{\mathcal{M}(\bar{Q}_c)}$. 
	Combining these properties, we deduce $J_{\sigma} \nparen{ \bar{u}_{0,h},\bar{u}_{\sigma} } \leq J_{\sigma} \nparen{ \hat{u}_{0}, \hat{u} }$. 
	This validates the existence of solutions in the discrete space $U_h \times \mathcal{U}_{\operatorname{vd}} $.

	\textcolor{M1}{The operators $\varUpsilon_h$ and $\varUpsilon_{\sigma}$ act as identities on $U_h$ and $\cU_{\sigma}$. 
	Furthermore, the operator $(\Morrey_{h} , \Morrey_{\sigma})^*: U_h \times \cU_{\sigma} \rightarrow \cW_{\sigma}^*$ is injective and we also know that $\dim(\cY_{\sigma} ) = \dim(\cW_{\sigma}^*)$. 
	Hence we deduce the injectivity of $\nparen{ u_0,u } \mapsto y_{\sigma} \nparen{ u_0,u }$ for discrete controls $\nparen{ u_0,u } \in U_h \times \mathcal{U}_{\sigma}$. }
	Now strict convexity of $J_{\sigma}$ on $U_h \times \mathcal{U}_{\operatorname{vd}}$ follows
	from $q>1$. Consequently, problem \eqref{eq:Pvd} has a unique discrete solution $\nparen{ \bar{u}_{0,h},\bar{u}_{\sigma} } \in U_h \times \mathcal{U}_{\sigma} $.
	
	For \textcolor{Mn}{every} solution $(\hat{u}_{0}, \hat{u})$ of \eqref{eq:Pvd},
	the projection $\nparen{ \varUpsilon_h\, \hat{u}_{0},\varUpsilon_{\sigma} \, \hat{u} }$
	is a discrete solution. 
	Moreover, there exists only one discrete solution. 
	So we deduce that all projections must coincide, showing the second statement of the theorem.
\end{proof}

Since all projections of solutions  of \eqref{eq:Pvd} yield the unique discrete solution $\nparen{ \bar{u}_{0,h},\bar{u}_{\sigma} }$, it suffices to analyze the convergence properties of $\nparen{ \bar{u}_{0,h},\bar{u}_{\sigma} }$ for $\nabs{\sigma} \to 0$. 
Furthermore we may find solutions of \eqref{eq:Pvd} numerically, by restricting the control space to $U_h \times \cU_{\sigma}$.

\bigskip
%
%
%
%
We can now prove the convergence result formulated in Theorem~\ref{thm:DiscVarConvergence} along the lines of the proof of \cite[Theorem 4.3.]{CasKun}. 
\begin{proof} \label{convergenceproof}
	
	\textcolor{M2}{Observe that $J_{\sigma}(\bar{u}_{0,h},\bar{u}_{\sigma}) \leq J_{\sigma}(0,0) = \tfrac{1}{q}\|\DesiredState\|_{L^q(Q_h)}^q$. This implies that the norms $\|\bar{y}_{\sigma}\|_{L^q(Q_h)}$, $\|\bar{u}_{0,h}\|_{\cM \left( \bar{\varOmega }_c \right) },$ and $\|\bar{u}_{\sigma}\|_{\cM \left(\bar{Q}_c \right)}$ are uniformly bounded for all $\sigma$.} Now, let $\{\sigma_n\}_n$ be a sequence with $|\sigma_n| \rightarrow 0$. Boundedness in norm implies that there exists a subsequence $\{\sigma_{n_k}\}_k$, such that the following holds true for $k \rightarrow \infty$:
	\begin{equation}
		\nparen{\bar{u}_{0,h_{n_k}}, \bar{u}_{\sigma_{n_k}}} \stackrel{*}{\rightharpoonup} (\tilde{u}_0,\tilde{u}) 
		\text{ in } 
		\mathcal{M}(\bar{\varOmega}_c) \times \mathcal{M}(\bar{Q}_c) 
		\quad \text{and} \quad  
		\bar{y}_{\sigma_{n_k}} \rightharpoonup \tilde{y} \text{ in } L^q(Q). \label{eq:4.17}
	\end{equation}
	We will split the proof into three steps.

	
	%
	\textbf{I -} \textit{$ \tilde{y}$ is the solution of \eqref{eq:PDE} corresponding to $(\tilde{u}_0,\tilde{u})$, i.e., $\adjheat^* \tilde{y} = \Morrey^* (\tilde{u}_0,\tilde{u})$.}
	\newline
	\textcolor{M1}{By constructing a suitable Friedrichs smoothing operators and by using standard results in approximation theory (e.g., cubic spline interpolation, see \cite[Theorem 1]{AhlbergNilson63}),
	one realizes that $\set{\psi \in \cC^2(\bar I;\R) : \psi(T) = 0 } \otimes (\cC^{2}(\varOmega) \cap W^{1,p}_0 (\varOmega))$ is dense in $W$.} Consequently, it is sufficient to test the smooth state equation \eqref{eq:2.1} against $w = \varphi \otimes \psi$ with $\psi \in \cC^2(\bar I;\R)$ satisfying $\psi(T) = 0$ and $\varphi \in \cC^{2}(\varOmega) \cap W^{1,p}_0 (\varOmega)$. 
	Let $\varphi$ be approximated by $\varphi_h \in Y_h$, such that
	\begin{equation}
		\textstyle 
		\int_\varOmega \nabla\varphi_h \nabla z_h \, \dd x = \int_\varOmega \nabla\varphi \nabla z_h \, \dd x \quad \text{for all  $w_h \in Y_h$} 
		\quad \text{and} \quad 	
		\nnorm{\varphi - \varphi_h}_{\cC(\bar{\varOmega})} \converges{h\to 0} 0.
		\label{eq:4.18}
	\end{equation}
	\textcolor{M1}{Indeed, one has the error estimate $\nnorm{\varphi - \varphi_h}_{L^\infty(\varOmega)}
		\leq C \ h^2 \, \log(1/h)\, \nnorm{\varphi}_{W^{2,\infty}(\varOmega)}$ for the Ritz projection.
		For details see Corollary~2 and Remark~4 in \cite{MR568857}.
		}
	Moreover, let $\psi_\tau = \sum_k \psi(t_k) \, \TimeNode_{t_k}$ be the continuous, piecewise linear interpolation of $\psi$ on the time grid and put $w_{\sigma} = \varphi_h \otimes \psi_{\tau} \in \cW_{\sigma}$.
	By construction, we have $w_{\sigma} \rightarrow w \in \cC(\bar{Q})$ for $|\sigma|\rightarrow 0$. Furthermore, $\partial_t w_{\sigma} \rightarrow \partial_t w \in L^p(Q)$\textcolor{Mn}{, where $\frac{1}{q}+\frac{1}{p} =1$,} for $|\sigma|\rightarrow 0$ since
	\begin{align*}
		\nnorm{\partial_t w - \partial_t w_\sigma}_{L^p(Q)}
		&\leq
		\nnorm{(\varphi - \varphi_h) \otimes \psi'}_{L^p(Q)}			
		+
		\nnorm{\varphi_h \otimes (\psi' - \psi_\tau')}_{L^p(Q)}			
		\\
		&=
		\nnorm{\varphi - \varphi_h}_{L^p(\varOmega)} \, \nnorm{\psi'}_{L^p(I)}			
		+
		\norm{\varphi_h}_{L^p(\varOmega)} \, \nnorm{\psi' - \psi_\tau'}_{L^p(I)}						
		\\
		&\leq
		\nabs{\varOmega}^{1/p} 
		\,
		\bigparen{
		\nnorm{\varphi - \varphi_h}_{\cC(\bar{\varOmega})} \, \nnorm{\psi'}_{L^p(I)}	
		+
		\norm{\varphi_h}_{\cC(\bar{\varOmega})} \, \nnorm{\psi' - \psi_\tau'}_{L^p(I)}					
		}
		.	
	\end{align*}	
\textcolor{M1}{ 	
	We have $\nnorm{\varphi - \varphi_h}_{\cC(\bar{\varOmega})} \converges{h\to 0} 0$ from \eqref{eq:4.18} 
	and $\nnorm{\psi' - \psi_\tau'}_{L^p(I)} \leq C \, h \nnorm{\psi''}_{\cC(\bar I)} \converges{\tau \to 0} 0$ can be confirmed by splitting $\bar{I}$ into its subintervals $I_k$, integrating and using $\psi(t_k) = \psi_{\tau}(t_k)$ for all $k \in \{1,\ldots,\N_{\tau}\}$.}
	Testing \eqref{eq:dse2} against this $w_{\sigma}$, we obtain
	\begin{equation}
	\langle \adjheat_{\sigma}^* \bar{y}_{\sigma}, w_{\sigma}\rangle =\textstyle \int_{\bar{\varOmega}_c} w_{\sigma}(0) \, \dd \bar{u}_{0,h} + \int_{\bar{Q}_c} w_{\sigma} \, \dd \bar{u}_{\sigma}.
	\label{eq:dseproduct}
	\end{equation}
	On the right hand side, we can perform the limit directly:
	\begin{equation*}
	\textstyle  \int_{\bar{\varOmega}_c} w_\sigma(0) \, \dd \bar{u}_{0,h}+ \int_{\bar{Q}_c} w_\sigma \, \dd \bar{u}_\sigma
	\converges{\abs{\sigma} \to 0} \int_{\bar{\varOmega}_c}  w(0) \, \dd \tilde u_0 + \int_{\bar{Q}_c} w \,  \dd \tilde  u.
	\end{equation*}
	The left hand side of \eqref{eq:dseproduct} can be expanded to
	\begin{equation}
	\langle \adjheat_{\sigma}^* \bar{y}_{\sigma}, w_{\sigma}\rangle = - \textstyle\int_Q \bar{y}_{\sigma} (\varphi_h \otimes \psi_\tau') \, \dd x \, \dd t + \int_Q \nabla \bar{y}_{\sigma} \nabla (\varphi_h \otimes \psi_{\tau}) \, \dd x \, \dd t. 
	\end{equation}
	Applying the very definition of $\varphi_h$ and integration by parts, we observe that 
	\begin{align*}
		\textstyle
		\int_Q \nabla \bar{y}_{\sigma} \, \nabla (\varphi_h \otimes  \psi_\tau) \,\dd x\, \dd t
		&=
		\textstyle
		- \!\int_Q \bar{y}_{\sigma} \, (\Delta \varphi \otimes \psi_\tau ) \, \dd x \, \dd t
		\converges{\abs{\sigma} \to 0}
		- \!\int_Q \tilde{y} \, \Delta  w \, \dd x \, \dd t.
	\end{align*}
	Along with $ (\varphi_h \otimes \psi_\tau') = \partial_t w_{\sigma}$ and $\textstyle -\int_Q \bar{y}_{\sigma} \, \partial_t w_{\sigma}\, \dd x\, \dd t \to  -\int_Q \tilde{y}\, \partial_t w\, \dd x\, \dd t$,  
	this implies that $\langle \adjheat_{\sigma}^* \bar{y}_{\sigma}, w_{\sigma}\rangle \to  \langle \adjheat^* \tilde{y}, w \rangle$ for all tensor product functions $w = \varphi \otimes \psi$, Thus, we deduce $\adjheat^* \tilde{y} = \Morrey^* (\tilde{u}_0,\tilde{u})$ from \eqref{eq:dseproduct}.
	%
	%
	%
	\newline
	\textcolor{M2}{
	\textbf{II -} \textit{$(\tilde{u}_0,\tilde{u})$ coincides with the unique solution $(\bar{u}_0,\bar{u})$ of \eqref{eq:Pvd}} that lies in $U_h \times \cU_\sigma$ 
	\newline
	In order to prove this, it suffices to show $J_{\sigma}(\tilde{u}_0,\tilde{u}) \leq J_{\sigma}(\bar{u}_0,\bar{u}) $. 
	\textcolor{Mn}{Recall that we identified $\cM(\bar{\varOmega }_c)$ and $\cM(\bar{Q}_c)$ with $\{u_0 \in \cM (\varOmega) \colon \supp(u_0) \subset \bar{\varOmega}_c \}$ and $\{ u \in \cM(Q) \colon \supp(u) \subset \bar{Q}_c \}$, respectively. In this sense, the sets $\{ f_0 \in \cC^{\infty}(\varOmega) \colon \supp(f_0) \subset \bar{\varOmega}_c \}$ and $ \{ f \in \cC^{\infty}(Q) \colon \supp(f) \subset \bar{Q}_c \} $ are dense in $\cM(\bar{\varOmega}_c)$ and $\cM(\bar{Q}_c)$ with respect to the sequential weak-* topology.\footnote{This can be seen by utilizing that $\bar \varOmega_c$ and $\bar Q_c$ have to satisfy certain uniform cone conditions (because they are Lipschitz domains, see \cite[Paragraph~4.8]{MR2424078}) and by convolution against suitable Friedrichs mollifiers that are compactly supported in the interior of finite, convex cones. Notice also that such convolutions do not increase the $\cM$-norms.} 
	}
	 Consequently, we may pick a specific minimizing sequence $(u_{0,m},u_m) = (f_{0,m} \, \dd \, x, f_m \, \dd \, x \otimes \dd \, t)$, where $f_{0,m} \in \cC^\infty(\varOmega)$ and $f_m \in \cC^\infty(Q)$ satisfy $\supp(f_{0,m}) \subset \varOmega_c$ and $\supp(f_m) \subset Q_c$. Then the states $y_m \coloneqq \adjheat^{-*}\Morrey^* (u_{0,m},u_m)$ are solutions of the heat equations
\begin{equation*}
	\left\{
	\begin{aligned}
		\partial_t y_m - \Delta y_m &=f_m, &&\text{in $Q$,}
		\\
		y_m(x,0) &=f_{0,m}, &&\text{in $\varOmega$,
		}\\
		y_m(x,t) &= 0, &&\text{on $\varSigma$.}
    \end{aligned}
    \right.
\end{equation*}
	It follows now from maximal regularity (recall that $\varOmega$ is now assumed to be of class $\cC^{1,1}$), that $y_m \in W^{2,1}_r(Q)$ for all $2 \leq r < \infty$.	
	Thus the finite element discretizations of the states converge to $y_m$ in $L^2(Q)$ and thus also in $L^q(Q)$. 
	More precisely, we have for each fixed $m$ that
	\begin{equation*}
	\lim_{\abs{\sigma}\rightarrow 0} \| \adjheat_{\sigma}^{-*} (\Morrey_{h}^* \oplus \Morrey_{\sigma}^*)(u_{0,m},u_m) -y_m\|_{L^q(Q)} =0.
	\end{equation*}
	We choose a suitable subsequence of $\{ \sigma_{n_{k}}\}_{k\in\N}$ as follows: 
	We put $k_1 \coloneqq 1$ and pick $k_m \geq k_{m-1}$ recursively  such that 
	\begin{equation*}
	\| \adjheat_{\sigma_{n_{k_m}}}^{-*} (\Morrey_{h_{n_{k_m}}}^* \oplus \Morrey_{\sigma_{n_{k_m}}}^*)(u_{0,m},u_m) -y_m\|_{L^q(Q)}^q \leq \tfrac{1}{m}
	\quad
	\text{for each $m \geq 2$.}
	\end{equation*}
	Now using 
	the projection properties \eqref{eq:4.5},\eqref{eq:4.10} and \eqref{eq:ygleich}  in combination with the above, we obtain
	\begin{align*}
	&J_{\sigma}(\bar{u}_{0,h_{n_{k_m}}},\bar{u}_{\sigma_{n_{k_m}}}) 
	\leq J_{\sigma} (\Upsilon_{h_{n_{k_m}}}u_{0,m}, \Upsilon_{\sigma_{n_{k_m}}}u_m)
	\\
	&\leq 
	\tfrac{1}{q} \| \adjheat_{\sigma_{n_{k_m}}}^{-*}(\Morrey_{h_{n_{k_m}}}^* \oplus \Morrey_{\sigma_{n_{k_m}}}^*)(u_{0,m},u_m) -y_m + y_m - \DesiredState \|_{L^q(Q)}^q 
	+ 
	\alpha \|u_m\|_{\cM(\bar{Q}_c)} + \beta \|u_{0,m}\|_{\cM(\bar{\varOmega}_c)}
	\\
	&
	\leq \tfrac{1}{q} \big( \tfrac{1}{m} + \|y_m - \DesiredState\|_{L^q(Q)}^q  \big) + 
	\alpha \|u_m\|_{\cM(\bar{Q}_c)} + \beta \|u_{0,m}\|_{\cM(\bar{\varOmega}_c)}
	.
	\end{align*}
	\textcolor{M1}{Next, we use the weakly lower semicontinuity of $J$ in combination with \eqref{eq:4.17},} then apply $\liminf_{m \rightarrow \infty }$ to both sides of the above inequality, and finally use the facts that $(u_{0,m},u_m)$ is a minimizing sequence and that $(\bar{u}_0,\bar{u})$ solves problem \eqref{eq:P}:
	\begin{align*}
	J(\tilde{u}_0,\tilde{u}) & \leq  \liminf_{m \rightarrow \infty } J_{\sigma}(\bar{u}_{0,h_{n_{k_m}}},\bar{u}_{\sigma_{n_{k_m}}}) \\
	& \leq
	\liminf_{m \rightarrow \infty } \tfrac{1}{q} \textcolor{Mn}{\left( \left( \tfrac{1}{m} + \|y_m - \DesiredState\|_{L^q(Q)}^q  \right) + 
	\alpha \|u_m\|_{\cM(\bar{Q}_c)} + \beta \|u_{0,m}\|_{\cM(\bar{\varOmega}_c)} \right)}\\
	&\textcolor{M1}{\leq \limsup_{m \rightarrow \infty }J_{\sigma}(u_{0,m},u_m)}\\
	&\leq \tfrac{1}{q} \|\bar{y}-\DesiredState\|_{L^q(Q)}^q + \alpha \| \bar{u} \|_{\cM(\bar{Q}_c)} + \beta \|\bar{u}_{0}\|_{\cM(\bar{\varOmega}_c)} = J(\bar{u}_0,\bar{u})
	.
	\end{align*}
    }
	
	\textbf{III -} \textit{proof of \eqref{eq:conv1}, \eqref{eq:conv2} and \eqref{eq:conv3} }
	\newline
	\textcolor{M1}{Altogether, we know that every sequence $\{\sigma_n\}_n$ with $\abs{\sigma_n}\rightarrow 0$ has a subsequence $\{\sigma_{n_{k_m}}\}_m$, such that
	\begin{equation*}
	\nparen{\bar{u}_{0,h_{n_{k_m}}}, \bar{u}_{\sigma_{n_{k_m}}}} \stackrel{*}{\rightharpoonup} (\bar{u}_0,\bar{u}) \in \mathcal{M}(\bar{\varOmega}_c) \times \mathcal{M}(\bar{Q}_c) \quad \textrm{and} \quad  \bar{y}_{\sigma_{n_{k_m}}} \rightharpoonup \bar{y} \in L^q(Q)
	\quad \text{for} \quad m \rightarrow \infty.
	\end{equation*}
	Since the limits are always the same \textcolor{Mn}{and $(\bar u_0, \bar u )$ and $\bar y$ are unique}, this implies already that
	\begin{equation*}
	\nparen{\bar{u}_{0,h}, \bar{u}_{\sigma}} \stackrel{*}{\rightharpoonup} (\bar{u}_0,\bar{u}) \in \mathcal{M}(\bar{\varOmega}_c) \times \mathcal{M}(\bar{Q}_c) \quad \textrm{and} \quad  \bar{y}_{\sigma} \rightharpoonup \bar{y} \in L^q(Q) \quad \textrm{for } \abs{\sigma} \rightarrow 0.
	\end{equation*}
	This shows \eqref{eq:conv2}.} Next, we can calculate
	\begin{align*}
		\tfrac{1}{q} \nnorm{\bar{y} - \DesiredState }_{L^q(Q)}^q 
		&
		\stackrel{\phantom{\eqref{eq:4.17}}}{\leq}
		\liminf_{\nabs{\sigma}\rightarrow 0} \tfrac{1}{q} \nnorm{\bar{y}_{\sigma} - \DesiredState }_{L^q(Q)}^q \textcolor{M1}{\leq \limsup_{\nabs{\sigma}\rightarrow 0} \tfrac{1}{q} \nnorm{\bar{y}_{\sigma} - \DesiredState }_{L^q(Q)}^q}
		\\
		&
		\stackrel{\phantom{\eqref{eq:4.17}}}{\leq}
		\limsup_{\nabs{\sigma}\rightarrow 0} \bigparen{ J_{\sigma}(\bar{u}_{0,h},\bar{u}_{\sigma}) 
		- \alpha\, \nnorm{\bar{u}_{\sigma}}_{\mathcal{M}(Q)} 
		- \beta \, \nnorm{\bar{u}_{0,h}}_{\mathcal{M}(\varOmega)} }
		\\
		& 
		\stackrel{\phantom{\eqref{eq:4.17}}}{\leq}
		\limsup_{\nabs{\sigma}\rightarrow 0} J_{\sigma}(\bar{u}_{0,h},\bar{u}_{\sigma}) 
		- \liminf_{\nabs{\sigma}\rightarrow 0} \bigparen{\alpha \, \nnorm{\bar{u}_{\sigma}}_{\mathcal{M}(Q)} + \beta \, \lVert \bar{u}_{0,h} \rVert_{\mathcal{M}(\varOmega)} }
		\\
		&
		\stackrel{\phantom{\eqref{eq:4.17}}}{=} 
		J ( \bar{u}_0,\bar{u}) -\liminf_{\nabs{\sigma}\rightarrow 0} \bigparen{\alpha \nnorm{\bar{u}_{\sigma}}_{\mathcal{M}(Q)} + \beta \lVert \bar{u}_{0,h} \rVert_{\mathcal{M}(\varOmega)} }
		\\
		&\stackrel{\eqref{eq:4.17}}{\leq} J ( \bar{u}_0,\bar{u}) - \bigparen{\alpha \nnorm{\bar{u}}_{\mathcal{M}(Q)} + \beta \nnorm{\bar{u}}_{\mathcal{M}(\varOmega)} }
		\\
		&
		\stackrel{\phantom{\eqref{eq:4.17}}}{\leq}
		\tfrac{1}{q} \nnorm{\bar{y} - \DesiredState }_{L^q(Q)}^q.
	\end{align*}
	\textcolor{M1}{Because of $1<q<\infty$, the space $L^q(Q)$ is an uniformly convex Banach space; thus weak convergence together with the convergence of the norms implies strong convergence (see \cite[Proposition 3.32]{MR2759829}).
	This shows \eqref{eq:conv1}.} 
	In a similar way we can prove the first part of \eqref{eq:conv3}
		\begin{align*}
		\alpha \,\nnorm{\bar{u}}_{\mathcal{M}(Q)} 
		&
		\stackrel{\eqref{eq:4.17}}{\leq} 
		\liminf_{\nabs{\sigma}\rightarrow 0} \alpha \, \nnorm{\bar{u}_{\sigma}}_{\mathcal{M}(Q)}
		\leq 
		\textcolor{M1}{\limsup_{\nabs{\sigma}\rightarrow 0} \alpha \, \nnorm{\bar{u}_{\sigma}}_{\mathcal{M}(Q)}}
		\\
		&
		\stackrel{\phantom{\eqref{eq:4.17}}}{\leq}
		\limsup_{\nabs{\sigma}\rightarrow 0} \bigparen{ J_{\sigma}(\bar{u}_{0,h},\bar{u}_{\sigma}) -\tfrac{1}{q} \nnorm{\bar{y}_{\sigma} - \DesiredState }_{L^q(Q)}^q - \beta \, \nnorm{\bar{u}_{0,h}}_{\mathcal{M}(\varOmega)} }
		\\
		&
		\,\stackrel{\eqref{eq:conv1}}{\textcolor{M1}{\leq}}
		J ( \bar{u}_0,\bar{u}) - \tfrac{1}{q} \nnorm{\bar{y}- \DesiredState }_{L^q(Q)}^q - \liminf_{\nabs{\sigma}\rightarrow 0} \beta \, \nnorm{\bar{u}_{0,h}}_{\mathcal{M}(\varOmega)}
		\\
		&
		\stackrel{\eqref{eq:4.17}}{\leq} J ( \bar{u}_0,\bar{u}) - \tfrac{1}{q} \nnorm{\bar{y}- \DesiredState }_{L^q(Q)}^q - \beta \nnorm{\bar{u}_{0}}_{\mathcal{M}(\varOmega)}
		\\
		&
		\stackrel{\phantom{\eqref{eq:4.17}}}{=} 
		\alpha \, \nnorm{\bar{u}}_{\mathcal{M}(Q)}.
		\end{align*}
		Finally, the second part of \eqref{eq:conv3} follows directly from $\lim_{\nabs{\sigma}\rightarrow 0} J_{\sigma}(\bar{u}_{0,h},\bar{u}_{\sigma}) =J ( \bar{u}_0,\bar{u})$ and the fact that we already showed the convergence of the other two terms.

\end{proof}
%

Now we discretize \eqref{eq:Pdual} with $w_{\sigma } \in \cW_{\sigma}$ and equivalently reformulate the problem in the following way:
\begin{alignat}{2}
&\min_{w_{\sigma}\in \cW_{\sigma}} &&K_{\sigma}(w_{\sigma}) \coloneqq \tfrac{1}{p}\|\adjheat_{\sigma} w_{\sigma}\|_{L^p(Q_h)}^p +  \langle  \adjheat_{\sigma} w_{\sigma}, \textcolor{C}{\DesiredState}\rangle_{L^p(Q_h),L^q(Q_h)} \label{eq:PdualVD} \tag{$P^*_{\sigma}$}\\
& \quad \textrm{s.t.} &&\|\Morrey_{h}(w_{\sigma})\|_{\infty,\bar{\varOmega}_c} \leq \beta \;\text{and} \; \|\Morrey_{\sigma}(w_{\sigma})\|_{\infty,\bar{Q}_c} \leq \alpha \label{eq:inftynorms}
\end{alignat}

\textcolor{C}{Similar to the continuous setting, it can be shown that \eqref{eq:PdualVD} is the Fenchel predual of the problem \eqref{eq:Pvd} restricted to $(u_0,u)\in U_h \times \cU_{\sigma}$.} In order to solve \eqref{eq:PdualVD}, we want to represent $\adjheat_\sigma : \cW_{\sigma} \rightarrow \cY_{\sigma}^* $ by a matrix, as done in \cite{CasasClasonKunisch}. From \cite[Section 4]{DHV} and \cite{Wollner} we know that the matrix representation of $\adjheat_{\sigma}^*: \cY_{\sigma} \rightarrow \cW_{\sigma}^*$ yields a Crank-Nicolson scheme with a smoothing step. We will derive this first.

Let $M_h \coloneqq \nparen{\ninnerprod{ \SpaceNode_{x_j},\SpaceNode_{x_k}}}_{j,k=1}^{N_h}$ be the mass matrix and $\stiff\coloneqq \nparen{ \textstyle \int_{\varOmega} \nabla \SpaceNode_{x_j} \nabla \SpaceNode_{x_k} \, \dd x}_{j,k=1}^{N_h}$ the stiffness matrix corresponding to $Y_h$. 
We define $y_{k,h} \coloneqq y_{\sigma}|_{I_k} \in Y_h$ for $k \in \{1,\ldots,N_{\tau}\} $, $w_{k,h} \coloneqq w_\sigma(\cdot,t_k) \in Y_h$ and $w_{k} \coloneqq w_{k,h} \otimes \TimeNode_{t_k}\in \cW_{\sigma}$ for $k \in \{0,\ldots,N_{\tau}-1\} $. We then obtain the following :
\begin{equation*}
\langle \adjheat_{\sigma}^* y_{\sigma},w_k  \rangle = \nparen{ y_{k+1,h}-y_{k,h} }^{\top} M_h \, w_{k,h} + \nparen{\tfrac{\tau_k}{2} \, y_{k,h} + \tfrac{\tau_{k+1}}{2}\,  y_{k+1,h}}^{\top} \stiff\, w_{k,h} 
\;\;
\text{for all $k \in \{1,\ldots,N_{\tau}-1\}$.}
\end{equation*}
For $k=0$, we have $ \langle \adjheat_{\sigma}^* y_{\sigma},w_0  \rangle = y_{1,h}^{\top} M_h \, w_{0,h} + \tfrac{\tau_1}{2} y_{1,h}^{\top} \stiff \, w_{0,h}$.

In order to represent the discrete state equation \eqref{eq:dse2} by a system of equations, we also need to calculate  $r(w_{\sigma}) \coloneqq \int_{\bar{\varOmega}_c}{w_{\sigma}(0) \, \dd u_{0}} +\int_{\bar{Q}_c}{w_{\sigma} \, \dd u} $. Due to the implicit discrete structure of the controls $(u_0,u)$, \textcolor{Mn}{ we can define $\tilde{u} \in V_h^* \times \cV_{\sigma}^*$ with $[\tilde{u}]_{j,k} = u_{j,k}$ and $[u_{0,h}]_j= u_{j,0}$ for $j \in \cI_h$ and $[u_{\sigma}]_{j,k}= u_{j,k}$ for $(j,k) \in \cI_{\sigma}$. Then we have}
%
%
%
\begin{align*}
	(\Morrey_{h} + \Morrey_{\sigma})^{*}(u_{0,h},u_{\sigma}) 
	= 
	\textstyle
	\sum\limits_{j=1}^{N_h}\sum\limits_{k=0}^{N_{\tau}-1} u_{j,k} \SpaceDelta_{x_j } \otimes \TimeDelta_{t_k} \in \cW_{\sigma}^* 
\end{align*}
with $u_{j,k}=0$ for $(j,0), j \notin \cI_h$ and $(j,k) \notin \cI_{\sigma}$. Since $w_{\sigma}(0)|_{\bar{\varOmega}_c} \in V_h = U_h^*$ and $w_{\sigma}|_{\bar{Q}_c} \in \cV_{\sigma} = \cU_{\sigma}^*$, we get 
\begin{equation*}
	r(w_{\sigma}) 
	= \textstyle \sum\limits_{j \in \cI_h} u_{j,0}w_{j,0} + \sum\limits_{(j,k)\in \cI_{\sigma}}u_{j,k}w_{j,k} 
	= \ninnerprod{ (\Morrey_{h} + \Morrey_{\sigma})^{*}(u_{0,h},u_{\sigma}),w_{\sigma} }_{\cW_{\sigma}^*,\cW_{\sigma}} ,
\end{equation*}

For the remainder of this section, we identify elements from $\mathcal{Y}_{\sigma}$ and $\cW_{\sigma}$ with vectors in $\mathbb{R}^{N_{\sigma}}$, $N_{\sigma} \coloneqq N_h \cdot N_{\tau }$ and elements from $U_h, \cU_{\sigma}$ with vectors in $\mathbb{R}^{|\cI_h|},\mathbb{R}^{|\cI_{\sigma}|}$, respectively. The discrete elements can be expressed via their respective expansion coefficients. To simplify the notation, we define $y_k \coloneqq (y_{1,k},\ldots,y_{N_h,k})^{\top}\in \mathbb{R}^{N_h}$ and write \textcolor{C}{$y_{\sigma}=(y_{1}^{\top},\ldots,y_{N_{\tau}}^{\top})^{\top} \in \mathbb{R}^{N_{\sigma}}$.} Analogously, we define $w_k$ for $k=0,\ldots,N_{\tau}-1$.
%
%
%

\textcolor{C}{We represent the discrete state equation by the following $(N_{\sigma} \times N_{\sigma})$-matrix:}
\begin{equation}
\cL^{\top} \coloneqq
\begin{pmatrix}
\nparen{M_h +\tfrac{\tau_1}{2} \stiff} &0  & \ldots &  \ldots  & 0 
\\
\nparen{-M_h+\tfrac{\tau_1}{2} \stiff} & \nparen{M_h+ \tfrac{\tau_2}{2} \stiff}  & & &\vdots 
\\
0 & \ddots & \ddots
\\
\vdots & & \ddots & \ddots &0 
\\
0 &\ldots & 0 &  \nparen{-M_h+\tfrac{\tau_{N_{\tau}-1}}{2} \stiff} & \nparen{M_h+ \tfrac{\tau_{N_{\tau}}}{2} \stiff}
\end{pmatrix}.
\label{eq:Lsigmastern}
\end{equation}
We point out that $\cL^{\top}$ is in fact the operator $\adjheat_{\sigma}^*$ concatenated with the space-time mass matrix $\cM_{\sigma}$ that maps from $L^p(Q_h)$ to $(L^q(Q_h))^*$, which is an important detail for the implementation. A~representation of $\adjheat_{\sigma}$, also concatenated with a space-time mass matrix, is the matrix $\cL = (\cL^{\top})^{\top}$. If we now actually want the representative of $\adjheat_{\sigma} w_{\sigma}$, it is necessary to multiply with the inverse of the space-time mass matrix $\cM_{\sigma}^{-1}$. 

Furthermore the embedding $(\Morrey_{h} \oplus \Morrey_{\sigma}): \cW_{\sigma} \rightarrow (V_h \times \cV_{\sigma})$, defined in \eqref{eq:Morreyh} and \eqref{eq:Morreysigma}, can be represented by a restriction matrix:
\begin{equation*}
(\cR_{h} +  \cR_{\sigma}): \mathbb{R}^{N_{\sigma}} \rightarrow \mathbb{R}^{|\cI_h| + |\cI_{\sigma}|},  \qquad w_{\sigma} \mapsto ( (w_{0,j})_{j\in \cI_h}^{\top}, (w_{j,k})_{(j,k)\in \cI_{\sigma}}^{\top})^{\top}.
\end{equation*}
From duality we conclude that $(\Morrey_{h} \oplus \Morrey_{\sigma})^*$ can be represented by $(\cR_{h} + \cR_{\sigma})^{\top}$, such that $\cL^{\top} y_{\sigma} = (\cR_{h} + \cR_{\sigma})^{\top}(u_{0,h}^{\top},u_{\sigma}^{\top})^{\top}$ is the matrix vector formulation of \eqref{eq:dse2}. 

We equivalently reformulate the constraints \eqref{eq:inftynorms} in \textcolor{C}{\ref{eq:PdualVD}} using \eqref{eq:betanorm} and \eqref{eq:alphanorm}:
\begin{alignat}{3}
\max_{j \in \cI_h} |w_{j,0}| & \leq \beta \quad &&\text{and} \qquad\qquad\quad\; \max_{(j,k) \in \cI_{\sigma}}|w_{j,k}| &\leq \alpha, \notag\\
	\text{if and only if}
	\quad 
	\max_{j \in \cI_h} \{w_{j,0}, -w_{j,0}\}- \beta &\leq 0  \quad &&\text{and} \quad \max_{(j,k) \in \cI_{\sigma}}\{w_{j,k},-w_{j,k}\}-\alpha \, &\leq 0. \notag
\end{alignat}
We can now formulate linear inequality constraints that are equivalent to \eqref{eq:inftynorms}.
All inequalities are strictly fulfilled for $w_\sigma = 0$, thus $w_\sigma = 0$ is an interior point of the feasible set and thus the Slater condition is satisfied (see e.g. \cite[(1.132)]{HPUU}). 
%
%
We discretize the desired state by sampling it on the dual time grid:
\begin{align}
	y_{\operatorname{d},\sigma} 
	=  
	\textstyle \sum\limits_{j=1}^{N_h}\sum\limits_{k=1}^{N_{\tau}} 
	\DesiredState \bigparen{x_j,(t_{k-1}+t_{k})/2} \, \SpaceNode_{x_j } \otimes \chi_k \in \cY_{\sigma}.
	\label{eq:DiscreteDesiredState}
\end{align}
By doing so, we assume that $\DesiredState$ has a certain minimum continuity. However, discretization by local averaging is also possible.
If we make sure that $y_{\operatorname{d},\sigma}  \to \DesiredState$ for $\nabs{\sigma} \to 0$, then using $y_{\operatorname{d},\sigma}$ in \eqref{eq:Pvd} instead of $\DesiredState$ does not interfere with the convergence result Theorem~\ref{thm:DiscVarConvergence}.

We proceed by setup the corresponding Lagrangian $\mathscr{L}$ with multipliers \textcolor{M2}{$\lambda^{(1)},\lambda^{(2)} \in \mathbb{R}^{|\cI_{\sigma}|}$ and $\lambda^{(3)},\lambda^{(4)} \in \mathbb{R}^{|\cI_h|}$: 
\begin{alignat}{2}
\mathscr{L}(w_{\sigma},\lambda^{(1)}, \lambda^{(2)},\lambda^{(3)},\lambda^{(4)}) 
&= \textstyle \tfrac{1}{p} \|\cM_{\sigma}^{-1} \cL w_{\sigma}\|_{L^p(Q_h)}^p + \langle &&\cM_{\sigma}^{-1}\cL w_{\sigma}, y_{\operatorname{d},\sigma}\rangle_{L^p(Q_h),L^q(Q_h)} \notag \\
& \quad \textstyle + \sum\limits_{(j,k)\in \cI_{\sigma}} \lambda_{j,k}^{(1)} \nparen{w_{j,k} -\alpha } 
&&+ \textstyle\sum\limits_{(j,k)\in \cI_{\sigma}} \lambda_{j,k}^{(2)} \nparen{ -w_{j,k} -\alpha } \notag \\
&\quad 
\textstyle
+ \sum\limits_{j \in \cI_h} \lambda_{j}^{(3)} \nparen{ w_{j,0} -\beta } 
&&+\textstyle \sum\limits_{j \in \cI_h} \lambda_{j}^{(4)} \nparen{-w_{j,0} -\beta } \notag.
\end{alignat}}
\textcolor{C}{For the sake of simplified numerics, we use a lumped mass matrix approach for computing $K_{\sigma}(w_{\sigma})$, where $w_{\sigma}$ is a vector, i.e., with a slight abuse of notation, we employ
\begin{align*}
\|\cM_{\sigma}^{-1}\cL w_{\sigma}\|_{L^p(Q_h)}^p &\coloneqq \textstyle \sum\limits_{j=1}^{N_h} \sum\limits_{k=1}^{N_{\tau}} |(\cM_{\sigma}^{-1}\cL w_{\sigma})_{j,k}|^p\, \omega_j\, \tau_k,  \\
\langle \cM_{\sigma}^{-1}\cL w_{\sigma}, y_{\operatorname{d},\sigma}\rangle_{L^p(Q_h),L^q(Q_h)} &\coloneqq \textstyle \sum\limits_{j=1}^{N_h} \sum\limits_{k=1}^{N_{\tau}} (\cM_{\sigma}^{-1}\cL w_{\sigma})_{j,k} \DesiredState(x_j,t_k) \, \omega_j\, \tau_k,
\end{align*} 
where $\omega_j \coloneqq \int_{\varOmega} \SpaceNode_{x_j } \dd x$.}
We also use a lumped mass matrix approach for $\cM_{\sigma}^{-1}$.

%
%
We can now form the optimality system using the Karush-Kuhn-Tucker conditions (see, e.g., \cite[(5.49)]{Boyd}).
Since the Slater condition is satisfied,
the Karush-Kuhn-Tucker conditions state that at the minimum $w_\sigma$, 
there must be $\lambda^{(1)}, \lambda^{(2)},\lambda^{(3)},\lambda^{(4)}$ the partial differential $\tfrac{\partial \mathscr{L}}{\partial w_{\sigma}}$ of $\mathscr{L}$ at the point $(w_\sigma,\lambda^{(1)}, \lambda^{(2)},\lambda^{(3)},\lambda^{(4)})$ has to vanish 
and that the following complementary conditions have to be fulfilled:
\textcolor{M2}{\begin{alignat}{6}
	\lambda_{j,k}^{(i)} 
	\Bigparen{ \tfrac{w_{j,k}}{(-1)^{(i-1)}}  -\alpha } &=0 
	\;\; \text{and} \;\; 
	\lambda_{j,k}^{(i)} &&\geq 0 
	\;\; &&\text{and} \;\; 
	\Bigparen{ \tfrac{w_{j,k}}{(-1)^{(i-1)}}  -\alpha }  &&\leq \; && 0 \quad &&\forall (j,k) \in \cI_{\sigma} ,i \in \set{1,2},  \notag 
	\\
	\lambda_{j}^{(i)} 
	\Bigparen{ \tfrac{w_{j,0}}{(-1)^{(i-1)}}  -\beta } &=0 
	\; \; \text{and} \;\; 
	\lambda_{j}^{(i)} &&\geq 0 
	\;\; &&\text{and} \;\;  
	\Bigparen{ \tfrac{w_{j,0}}{(-1)^{(i-1)}} -\beta } &&\leq \; && 0 \quad &&\forall j \in \cI_h, i \in \set{ 3,4 },  \notag 
	\end{alignat}}
which can be equivalently reformulated for all $(j,k)\in \cI_{\sigma}$ and $j \in \cI_h$ \textcolor{M1}{with an arbitrary $\kappa >0$}:
\textcolor{M2}{\begin{alignat}{4}
	&N^{(1)}_{j,k} &&\coloneqq   \max \Set{0, \, \lambda^{(1)}_{j,k} + \kappa (w_{j,k}-\alpha) } &&- \lambda^{(1)}_{j,k} &&=0, \notag \\
	& N^{(2)}_{j,k} &&\coloneqq   \max\Set{0, \, \lambda^{(2)}_{j,k} + \kappa (-w_{j,k}-\alpha) }&&- \lambda^{(2)}_{j,k}&&=0,  \notag 
	\\
	&N^{(3)}_{j}   &&\coloneqq  \max\Set{0, \, \lambda^{(3)}_{j} + \kappa (w_{j,0}-\beta ) } &&-\lambda^{(3)}_{j} &&=0, \notag \\ 
	& N^{(4)}_{j}   &&\coloneqq   \max\Set{0, \, \lambda^{(4)}_{j} + \kappa (-w_{j,0}-\beta) }&&-\lambda^{(4)}_{j} &&=0. \notag
	\end{alignat}}
We define \textcolor{M2}{
\begin{equation*}
	\mathscr{F}(w_{\sigma},\lambda^{(1)},\lambda^{(2)},\lambda^{(3)},\lambda^{(4)}) 
	\coloneqq 
	\begin{pmatrix} \tfrac{\partial \mathscr{L}}{\partial w_{\sigma}} & N^{(1)} & N^{(2)} & N^{(3)} & N^{(4)}
	\end{pmatrix}^{\top} \in \mathbb{R} ^{N_{\sigma}  + 2 (|\cI_{\sigma}| + |\cI_h|)}  
\end{equation*}} 
%
containing the left sides of our optimality system and solve the equation \linebreak \textcolor{M2}{${\mathscr{F}(w_{\sigma},\lambda^{(1)},\lambda^{(2)},\lambda^{(3)},\lambda^{(4)})=0}$} by a semismooth Newton method (\cite[Algorithm 2.11]{HPUU}). In this algorithm we need to choose the matrix to be used in the semismooth Newton equation from the set of matrices denoted by Clarke's generalized Jacobian (\cite[Example 2.4]{HPUU}):
\begin{equation*}
\partial \mathscr{F} (x) \coloneqq \operatorname{conv}
	\Big\{
	 M \;:\; x_k \stackrel{k \rightarrow \infty}{\longrightarrow} x, 
	 \, 
	 \mathscr{F}'(x_k) \longrightarrow M,\, 
		\mathscr{F} \text{ differentiable at } x_k 
	 \Big\},
\end{equation*}
where $\conv$ denotes the convex hull.
Here, a choice has to be made for the numerics since the generalized Jacobians of
$N^{(i)}$, $i \in \set{1,2,3,4 }$ need not be singletons.
This is due to the $\max$-functions and because we have
\begin{equation*}
\partial_x \bigparen{ \max\set{0, g(x) } } = 
\begin{cases} 0, & \text{if } g(x)<0, \\ 
\textcolor{M1}{\operatorname{conv}\Set{ 0, \partial_x g(x) },} & \text{if }  g(x) =0,  \\ 		
\partial_x g(x), & \text{if } g(x)>0
\end{cases}
\end{equation*}
for every differentiable scalar function $g(x)$.
Here, we make the decision to always choose \textcolor{Cn}{$\partial_x \bigparen{ \max\set{0, g(x) } } = \partial_x g(x)$, if $g(x)=0$.} 
%
Using this, we define:
\textcolor{M2}{\begin{equation}
 D \mathscr{F} \coloneqq \begin{pmatrix} &\tfrac{\partial^2 \mathscr{L}}{\partial^2 w_{\sigma}^2} &\tfrac{\partial^2 \mathscr{L}}{\partial w_{\sigma} \partial \lambda^{(1)}} &\tfrac{\partial^2 \mathscr{L}}{\partial w_{\sigma} \partial \lambda^{(2)}} &\tfrac{\partial^2 \mathscr{L}}{\partial w_{\sigma} \partial \lambda^{(3)}}  &\tfrac{\partial^2 \mathscr{L}}{\partial w_{\sigma} \partial \lambda^{(4)}}\\
	&\tfrac{\partial N^{(1)}}{\partial w_{\sigma}} &\tfrac{\partial N^{(1)}}{\partial \lambda^{(1)}}  &0 &0 &0 \\
	&\tfrac{\partial N^{(2)}}{\partial w_{\sigma}} &0  &\tfrac{\partial N^{(2)}}{\partial \lambda^{(2)}} &0 &0 \\
	&\tfrac{\partial N^{(3)}}{\partial w_{\sigma}} &0  &0 &\tfrac{\partial N^{(3)}}{\partial \lambda^{(3)}} &0 \\
	&\tfrac{\partial N^{(4)}}{\partial w_{\sigma}} &0  &0 &0 &\tfrac{\partial N^{(4)}}{\partial \lambda^{(4)}} 
	\end{pmatrix} \in \partial \mathscr{F} ,\label{eq:matrixDF}
	\end{equation}}%
for the semismooth Newton method. An interesting observation is that for $\kappa=1$ we have a symmetric matrix on the active sets.

We want to remark that we could follow \cite{CasasClasonKunisch} and employ Fenchel duality to recover a problem in the variable $\tilde{u}$. However, this would require to add the representation of the adjoint from \eqref{eq:upartialF} to our optimality system. As $p>2$ the exponent $\tfrac{1}{p-1}$ is strictly smaller than 1, which is problematic for derivative based methods and was our main motivation to use the Fenchel duality approach in the first place. 

Instead, we solve for the optimal adjoint $\bar{w}_{\sigma}$ and recover the optimal control $(\bar{u}_{0,h},\bar{u}_{\sigma})$ through the discrete version of \eqref{eq:RecoverControl}:
\begin{equation}
(\cR_{h} + \cR_{\sigma})^{\top} (\bar{u}_{0,h}^{\top},\bar{u}_{\sigma}^{\top})^{\top} = \cL^{\top} (| \cM_{\sigma}^{-1}\cL \bar{w}_{\sigma}|^{p-2} \, 
\cM_{\sigma}^{-1} \cL \bar{w}_{\sigma} + y_{\operatorname{d},\sigma})
\label{eq:uweinsdis}
\end{equation}
One could come to the conclusion that this is problematic since $(\cR_{h} + \cR_{\sigma})^{\top}$ is in general only injective, not surjective.
But the expansion coefficients $\bar u_{j,0}$ and $\bar u_{j,k}$ of the discrete solution
$(\bar{u}_{0,h},\bar{u}_{\sigma})$ have to vanish anyways for
$j \not \in \cI_h$ and $(j,k) \not \in \cI_\sigma$ (see \eqref{eq:DiscreteSparsityStructure}). 
So the remaining coefficients have merely to be read off.

\section{Discontinuous Galerkin Discretization}
\label{sec:discgalerkin}
This section deals with a full discretization concept of \eqref{eq:P}, namely \textcolor{C}{discontinuous Galerkin discretization}, which is suggested in \cite{CasKun}. 
The discretization strategy will be adapted to our setting and notation. 
Also a convergence result for the fully discrete problem \eqref{eq:Psigma} analogous to Theorem \ref{thm:DiscVarConvergence} is proven in \cite{CasKun}.


We use the discrete spaces $\cY_{\sigma}, Y_h, U_h$ as introduced before in \eqref{eq:Ysigma}, \eqref{eq:Yh}, \eqref{eq:Uh}, respectively and the space-time discrete control space:
\begin{equation*}
	\cU_{\operatorname{DG}} 
	\coloneqq 
	\Span  \, \{\SpaceDelta_{x_j } \otimes \chi_k :j \in \cI_h, k \in \cI_{\tau} \}, \qquad \cI_{\tau} \coloneqq \{ k : I_k \subset \bar{I}_c\} 
.
\end{equation*}

In \cite{CasKun}, an \textcolor{C}{implicit Euler time stepping scheme} is used for the discrete state equation. 
For $y_{\sigma} \in \cY_{\sigma}$ and for every $k \in \set{1,\ldots,N_{\tau}}$, we define $y_{k,h} \coloneqq y_{\sigma}|_{I_k} \in Y_h$. 
Let $(u_{0,h},u_{\sigma}) \in  U_h\times\mathcal{U}_{\operatorname{DG}} $ be given and $z_h \in Y_h$ arbitrary. 
Then the following equations form the discrete state equation:
\begin{equation}
	\begin{cases}
		\textcolor{M1}{  \left\langle y_{k,h}-y_{k-1,h} , z_h \right\rangle_{L^2}  } + \tau_k \int_{\varOmega}{\nabla y_{k,h} \nabla z_h \, dx} 
		=  
		\int_{\bar Q_c}{ (z_h \otimes \chi_k) \,\dd u_{\sigma}}
		\quad
		\text{for $k \in \set{1,\ldots,N_{\tau}}$,}
		\\
		y_{0,h}=y_{0h},
	\end{cases}
	\label{eq:dse}
\end{equation}
where $y_{0h} \in Y_h$ is the unique element satisfying:
\begin{equation}
	\textstyle
	\ninnerprod{ y_{0h},z_h }_{L^2} = \int_{\bar{\varOmega}_c}{z_h \, \dd u_{0,h}} \qquad \forall \, z_h \in Y_h \label{eq:y0h}\\
\end{equation}
Here $\left\langle \cdot,\cdot\right\rangle_{L^2}\ $ denotes the scalar product in $L^2(\varOmega)$. 
We denote the solution of the discrete state equation \eqref{eq:dse}
by  $y_{\sigma}(u_{0h},u_{\sigma})$,
and define the discrete objective function
\begin{align*}
	J_{\operatorname{DG}}(u_{0h},u_{\sigma})
	\ceq 
	\tfrac{1}{q} \, \nnorm{y_{\sigma}(u_{0h},u_{\sigma})-\DesiredState }_{L^q(Q_h)}^{q} 
	+ \alpha \, \nnorm{u_{\sigma} }_{\mathcal{M}(\bar{Q}_c)}
	+ \beta \, \nnorm{ u_{0h} }_{\mathcal{M}(\bar{\varOmega}_c)}.
\end{align*}
This allows us to formulate the following discrete optimization problem
\begin{equation}
	\min_{(u_{0h},u_{\sigma}) \in U_h \times \mathcal{U}_{\operatorname{DG}} } J_{\operatorname{DG}}(u_{0h},u_{\sigma}) 
	\tag{$P_{\operatorname{DG}}$}.
	\label{eq:Psigma}
\end{equation}
Similar to \cite{CasasClasonKunisch}, we set up the system matrix for the discrete state equation. 
One difference that we need to consider is $u_{0,h} \neq 0$.
This leads to $N_h$ further degrees of freedom for the state
and also to $N_h$ additional columns and $N_h$ additional rows in the system matrix; see \cite[Chapter 12]{Thomee} for further details.
With the mass matrix $M_h \coloneqq \nparen{\ninnerprod{ \SpaceNode_{x_j},\SpaceNode_{x_k}}}_{j,k=1}^{N_h}$ and
the stiffness matrix $\stiff\coloneqq \nparen{ \textstyle \int_{\varOmega} \nabla \SpaceNode_{x_j} \nabla \SpaceNode_{x_k} \, \dd x}_{j,k=1}^{N_h}$,
the left hand sides of \eqref{eq:dse} and \eqref{eq:y0h}
can be encoded into the following matrix of size $(N_\sigma + N_h) \times (N_\sigma + N_h)$:
\begin{align*}
\cL_{\operatorname{DG}}^{\top}
\ceq
\begin{pmatrix} &M_h &0 &\ldots&\ldots  &0\\
&-M_h &M_h + \tau_1 \stiff  &  & &\vdots \\
&0 &-M_h & M_h +\tau_2 \stiff &  &\vdots\\
&\vdots & &\ddots &\ddots &0
\\
&0 &\ldots &0 &-M_h &M_h +\tau_{N_{\tau}}\stiff  
\end{pmatrix}.
\end{align*}
As in Section \ref{sec:vardiscr}, this matrix represents the state-to-control-operator concatenated with the space-time mass matrix $\cM_{\sigma}$. The representation of the adjoint state equation is $\cL_{\operatorname{DG}} = (\cL_{\operatorname{DG}}^{\top})^{\top}$. 
 
With the discrete representations
\begin{align*}
 	\textstyle
	u_{\sigma} 
	= \sum\limits_{j \in \cI_h} \sum\limits_{i \in \cI_{\tau}} u_{j,i} \,\SpaceDelta_{x_j} \otimes \chi_i \in \cU_{\operatorname{DG}}
	\qand
	z_h  
	= \sum\limits_{l=1}^{N_h} z_l \, \SpaceNode_{x_l} \in Y_h
\end{align*}
and using that the ``mass matrix'' $\nparen{\ninnerprod{ \SpaceDelta_{x_j},\SpaceNode_{x_l} }}_{j,l=1}^{N_h}$ is the identity in $\mathbb{R}^{N_h \times N_h}$, we obtain the following for the right hand side in \eqref{eq:dse}:
\begin{equation*}
	\textstyle
	\int_{\bar{Q}_c} (z_h \otimes \chi_k) \,\dd u_{\sigma} 	
	=  
	\begin{cases}
	\tau_k \sum\limits_{j \in \cI_h} u_{j,k} \, z_j, &\text{if $k \in \cI_{\tau}$,}  \\
	0, &\text{else}.
	\end{cases}
\end{equation*}
Analogously, with $u_{0,h} = \textstyle\sum\limits_{j \in \cI_h} u_j \, \SpaceDelta_{x_j} \in U_h$, the right hand side from \eqref{eq:y0h} turns into
\begin{equation*}
	\textstyle\int_{\bar{\varOmega }_c} z_h \, \dd u_{0,h} = \sum\limits_{j \in \cI_h} u_j \, z_j.
\end{equation*}
Restriction matrices similar the those in \ref{sec:vardiscr} can be derived and the discrete state equation can be written in matrix form. 
This can be used to discretize \eqref{eq:Pdual}, leading to an optimization problem in the variable $w_{\operatorname{DG}} = (w_{j,k})_{j=1, k=0}^{N_h, N_{\tau}} \in \mathbb{R}^{N_{\sigma}+N_h}$. 
The setup for the fully discrete problem and the derivation of the optimality system are almost identical to the procedures from Section \ref{sec:vardiscr};
one only has to replace $\cL$ by $\cL_{\operatorname{DG}}$
and to keep in mind that the number of degrees of freedom changes from $N_{\sigma}$ to $N_{\sigma}+N_h$.

\section{Computational Results}\label{sec:compres}


\newlength{\imgwidth}
\setlength{\imgwidth}{0.315\textwidth}
	
	We numerically solve \eqref{eq:PdualVD} by a semismooth Newton method as derived in Section \ref{sec:vardiscr}. To simplify, we fix $u_0 =0$. Therefore the condition $\|\Morrey_{h}(w_{\sigma}) \|_{\infty,\bar{\varOmega}_c} \leq \beta$ in problem \eqref{eq:PdualVD} disappears and so do $\lambda^{(3)}$ and $\lambda^{(4)}$ in the Lagrangian $\mathscr{L}$. The dimension of the optimality system is reduced accordingly since $N^{(3)}$ and $N^{(4)}$ do not have to be considered. For the discontinuous Galerkin discretization from Section \ref{sec:discgalerkin} of problem \eqref{eq:Pdual}, we proceed similarly. Furthermore, here the first row and column of $\cL_{\operatorname{DG}}^{\top}$ can be eliminated.
	
	In this section all variables are specified as their discrete representatives, hence we omit the indices. As our domain for both examples, we choose 
$\varOmega = {(0,1)} \subset \mathbb{R}$, $I={(0,\tfrac{3}{2})}$, and the relatively compact Lipschitz domain $Q_c \coloneqq (\tfrac{1}{4},\tfrac{3}{4}) \times (\tfrac{1}{4},\tfrac{5}{4}) \subset \subset Q = \varOmega \times I $. 
We assume that our mesh is equidistant, consequently every cell is of size $\tau \cdot h$. We set $\kappa = 1$ and $q=\tfrac{4}{3}$ so that $p=4$. 

\textcolor{C}{Let us remark that $p >2$ can lead the the matrix $D\mathscr{F}(w_\sigma,\lambda)$ being singular. 
The cause of this trouble is the second derivative of $z \mapsto \tfrac{1}{p} \, \nnorm{z}_{L^p(Q)}^p$; it appears as central building block of $\frac{\partial^2 \mathscr{L}}{\partial^2 w^2}$ and is nearly singular whenever $z$ is not pointwise bounded away from $0$.
We circumvent this problem by adding a suitable multiple of the residual $\nnorm{\mathscr{F}} \, \cM_\sigma$ to the second derivative of $z \mapsto \tfrac{1}{p} \, \nnorm{z}_{L^p(Q)}^p$; as it is well-known (see, e.g., \cite{MR3800474}), such a regularization does not deteriorate the convergence rate of Newton's method.
}

The purpose of our first numerical example is to illustrate the differences between variational discretization and discontinuous Galerkin discretization.
Therefore, we use a relatively coarse space-time grid with $h = \frac{1}{4}$ and $\tau = \tfrac{h}{2}= \frac{1}{8}$. \textcolor{M2}{We generate a discrete desired state $y_{\operatorname{d}}$ by setting 
$\DesiredState \ceq y(u) = L^{-*} \Morrey^*(u)$ for the measure control $u = \updelta_{(1/2,1/2)}$;
afterwards, we discretize $\DesiredState$ according to \eqref{eq:DiscreteDesiredState}, where we utilize a truncated Fourier expansion in order to evaluate $\DesiredState$ on the points $(x_j , (t_{k-1}+t_k)/2)$.
}
Consequently, this  problem is a source identification example that inherits sparsity. 
If the penalty parameter $\alpha$ equals zero, the only admissible point for the predual problem is $w \equiv 0$. Hence, \eqref{eq:uweinsdis} shows that in this case the optimal discrete controls $u_{\sigma,0} $ and $u_{\operatorname{DG},0}$ for the two discretization approaches can be calculated by applying the discrete heat operator to $\DesiredState$.
This is meaningful since, for $\alpha =0$, the only term remaining in the objective functional is the tracking term $\tfrac{1}{q}\|y-\DesiredState\|_{L^q(Q)}^q$. \textcolor{C}{In this sense the controls $u_{\sigma,0} $ and $u_{\operatorname{DG},0}$ are the solutions to \eqref{eq:Pvd} and \eqref{eq:Psigma} for $\alpha = 0$ with the chosen $\DesiredState$.} Due to the different discretization approaches the calculated controls differ, which can be observed in Figure \ref{fig:NumericalSetup}. \textcolor{M2}{As a consequence of the discretization error in $\DesiredState$, we are not able to reproduce $u$ exactly in either case.}

\begin{figure}[t]
	\begin{center}
		\setlength{\tabcolsep}{0pt}
		\begin{tabular}{|c|c|c|c|c|}
			\hline
			\textbf{control}
			& \textbf{associated state}
			& \textbf{desired state}
			& \textbf{\eqref{eq:Pvd}, $\alpha = 0$}
			& \textbf{\eqref{eq:Psigma}, $\alpha = 0$}
			\\
			\hline
			\raisebox{-0.5\height}{
			\includegraphics[width=0.185\textwidth]{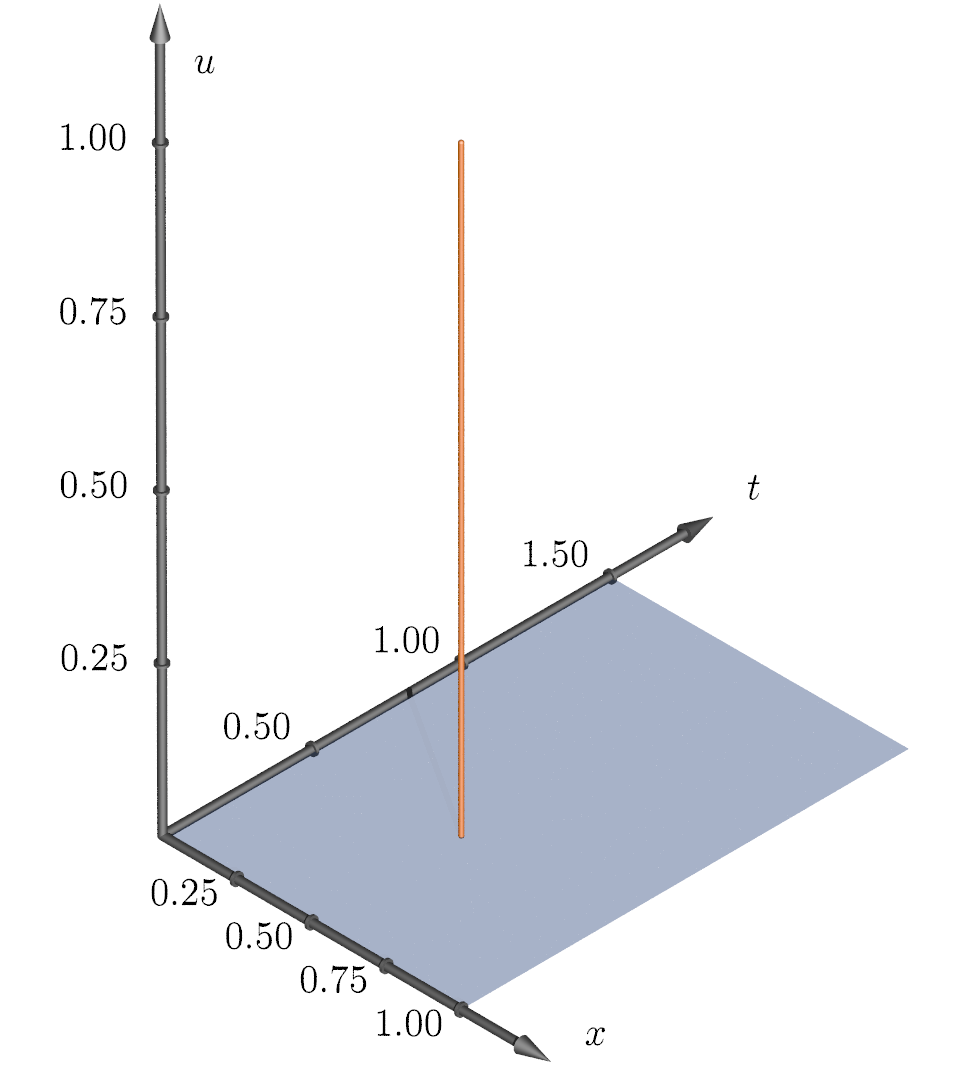}
			}
	     	&\raisebox{-0.5\height}{	
			\includegraphics[width=0.185\textwidth]{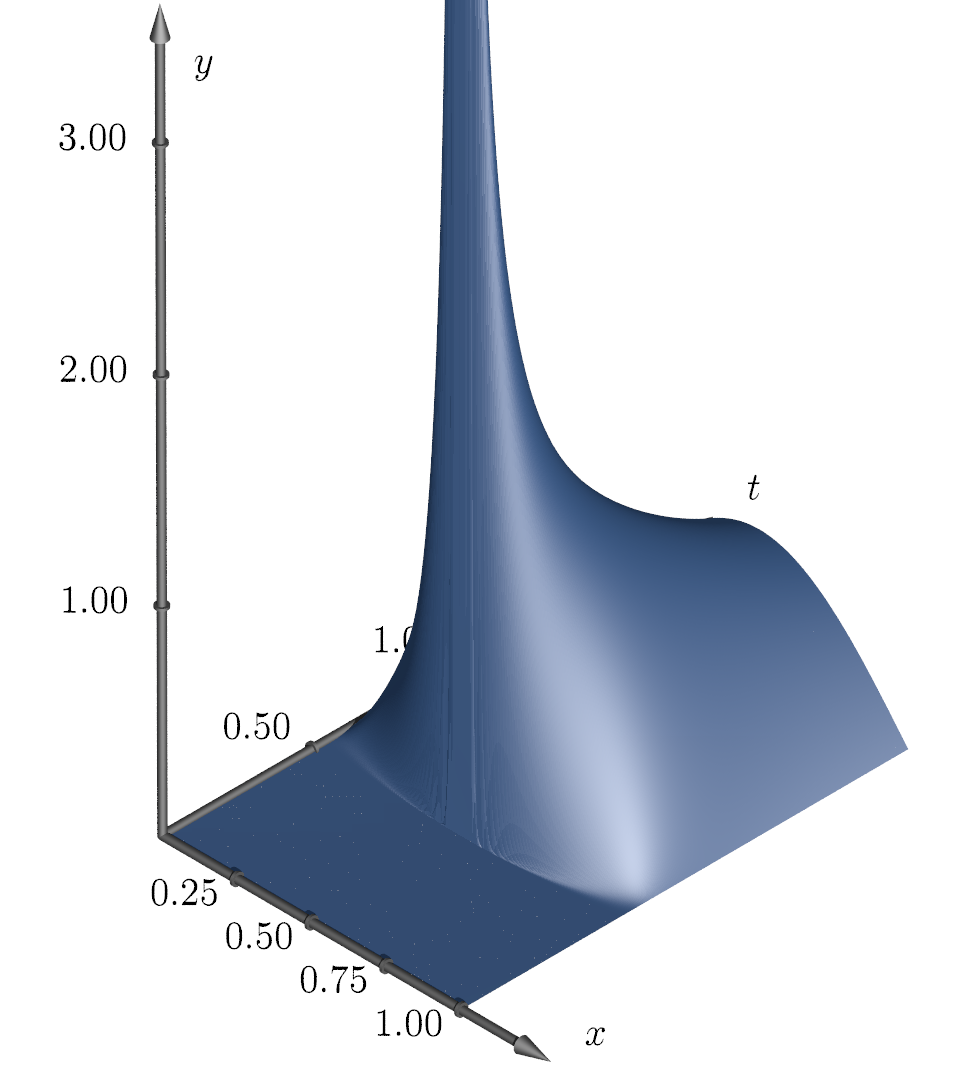}
		    }
	        &\raisebox{-0.5\height}{	
		   	\includegraphics[width=0.185\textwidth]{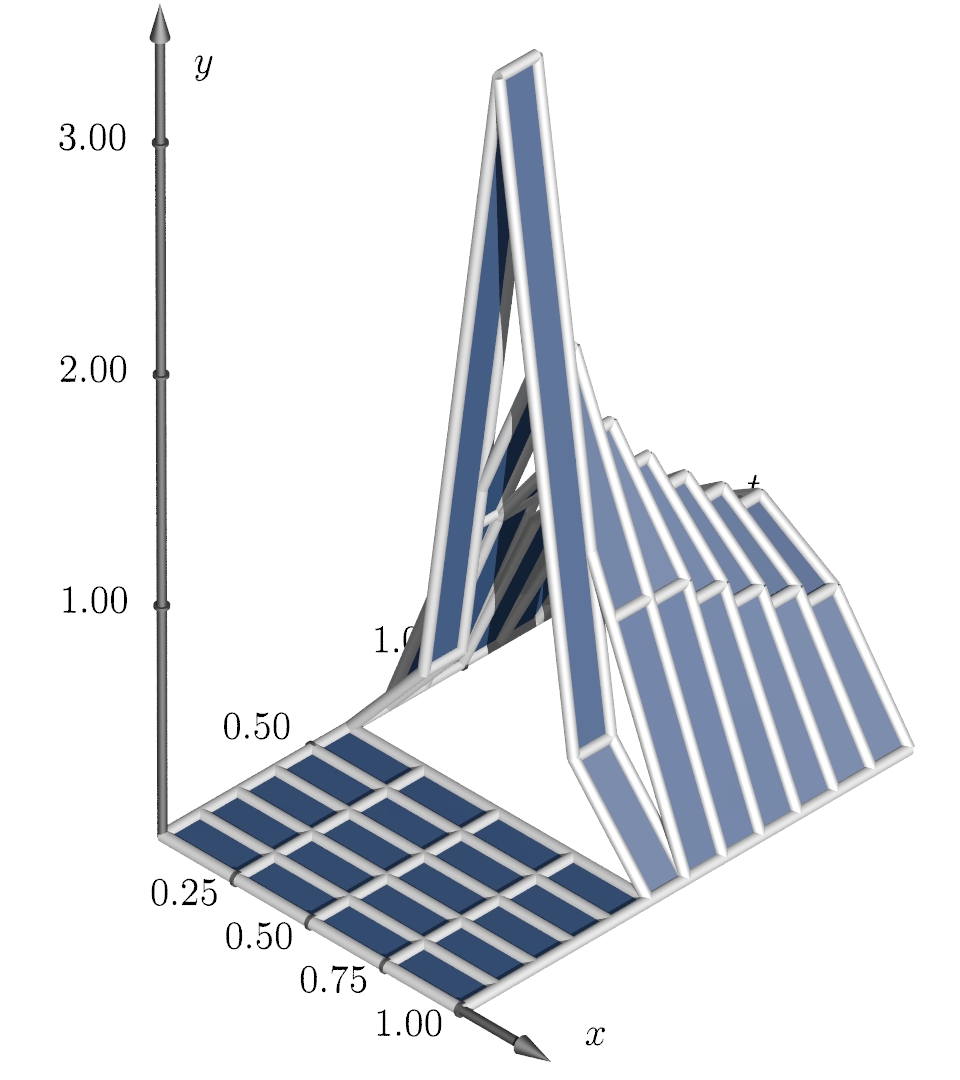}
			}
			&\raisebox{-0.5\height}{	
			\includegraphics[width=0.185\textwidth]{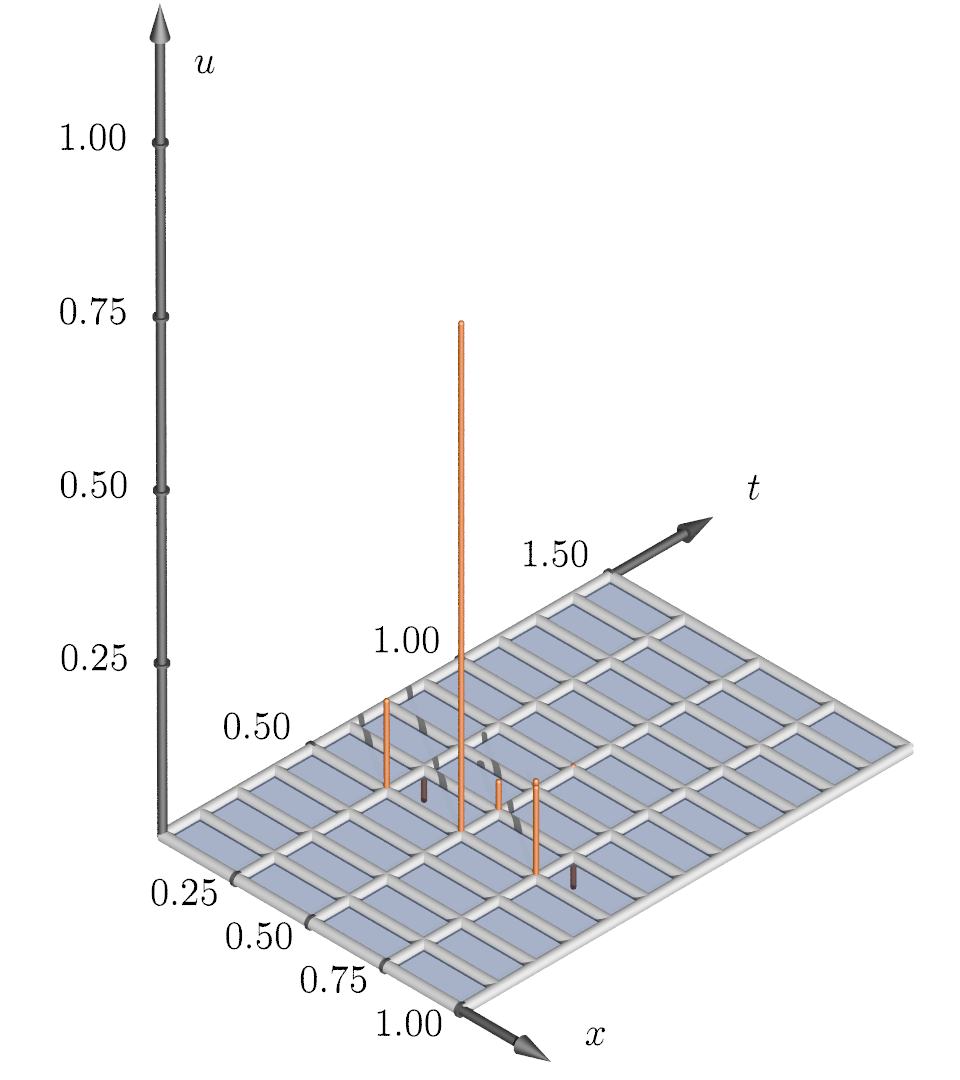}
			}
			&\raisebox{-0.5\height}{
			\includegraphics[width=0.185\textwidth]{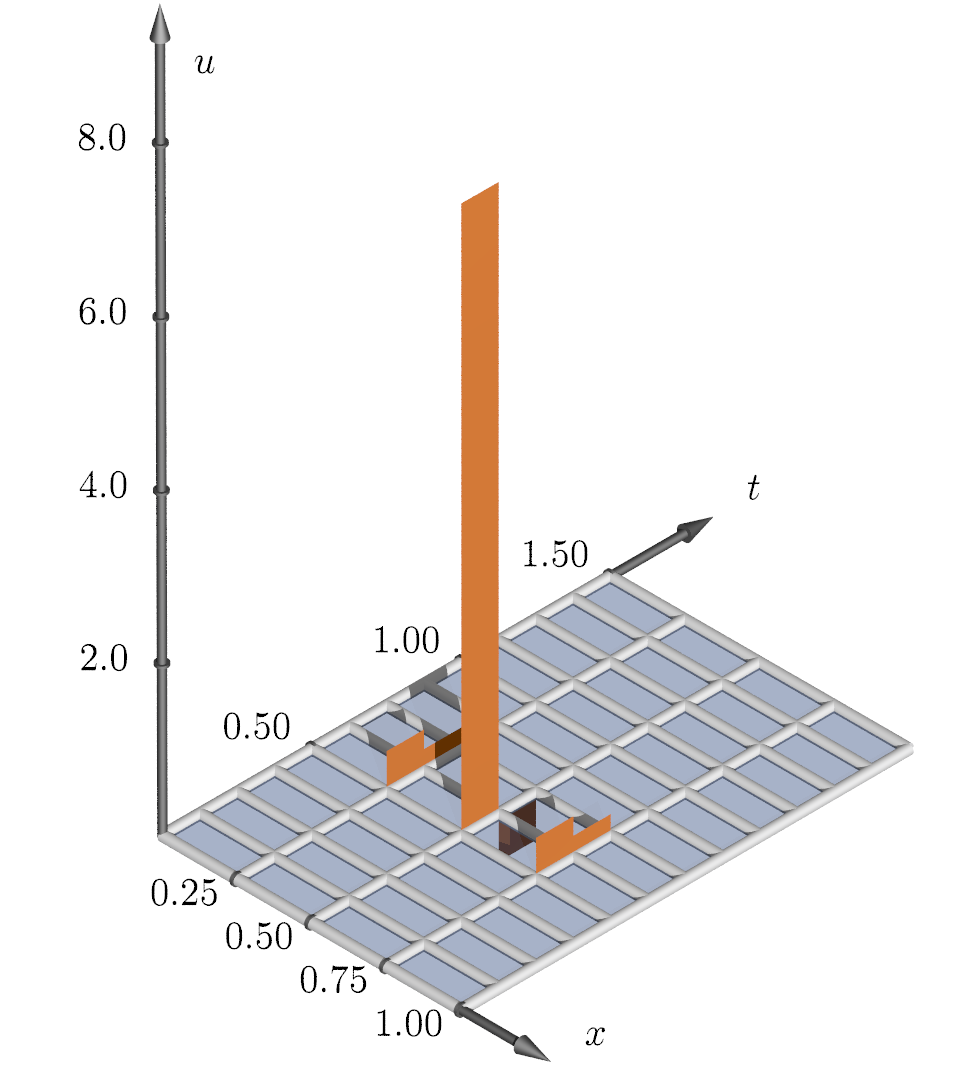}
			}	
			\\
			\hline	
		\end{tabular}
	\end{center}
	\caption{Numerical setup on $4 \times 12$ space-time grid with $q = \tfrac{4}{3}$. From left to right: control $u = \updelta_{(1/2,1/2)}$, associated state $y(u)$ (sampled  from the analytic solution with spacial Fourier modes), discrete desired state $\DesiredState$ and calculated controls $u_{\sigma,0}$ and $u_{\operatorname{DG},0}$ for $\alpha =0$. \\
		Here the controls are represented by their coefficients. 
		In the case of piecewise constant controls in time, which are used in the discontinuous Galerkin setting, this leads to coefficient values, which are scaled with $\tfrac{1}{\tau}=8$. 
	}
	\label{fig:NumericalSetup}
\end{figure}
	To identify the source location, we raise the penalty parameter $\alpha$ because this will lead to a decrease in the norm of the control and we expect a smaller support. The influence of $\alpha$ can be observed by plotting the norm of $u_{\sigma, \alpha}$ and $u_{\operatorname{DG},\alpha}$ respectively for a range of $\alpha$. 
For each \textcolor{Mn}{$i \in \{ \sigma, \operatorname{DG} \}$}, there exists a value $\bar{\alpha}_i$, such that for all $\alpha_i \geq \bar{\alpha}_i$ the optimal control corresponding to $y_{\operatorname{d},\sigma}$ is $u_{i,\alpha_i} \equiv 0$. Additionally it is interesting to look at the values of 
$\nnorm{y_{i,\alpha}-y_{\operatorname{d}}}_{L^{4/3}}$ for $i \in \{\sigma,\operatorname{DG}\}$ and various values of $\alpha$; we plotted the dependences in Figure \ref{fig:uNorm_and_yerror}.
	According to our expectations, the control norms are monotonically decreasing in $\alpha$ and eventually go to zero, while the errors in the tracking terms $\nnorm{y_{i,\alpha}-y_{\operatorname{d}}}_{L^{4/3}}$ grow. The graphs for both strategies look very similar. This makes perfect sense, as we discretize the same problem and both discretization strategies converge towards the true solution.

	%
	\begin{figure}[ht]
		\begin{center}
			\begin{minipage}[c]{0.49\textwidth}
				\includegraphics[width=\textwidth]{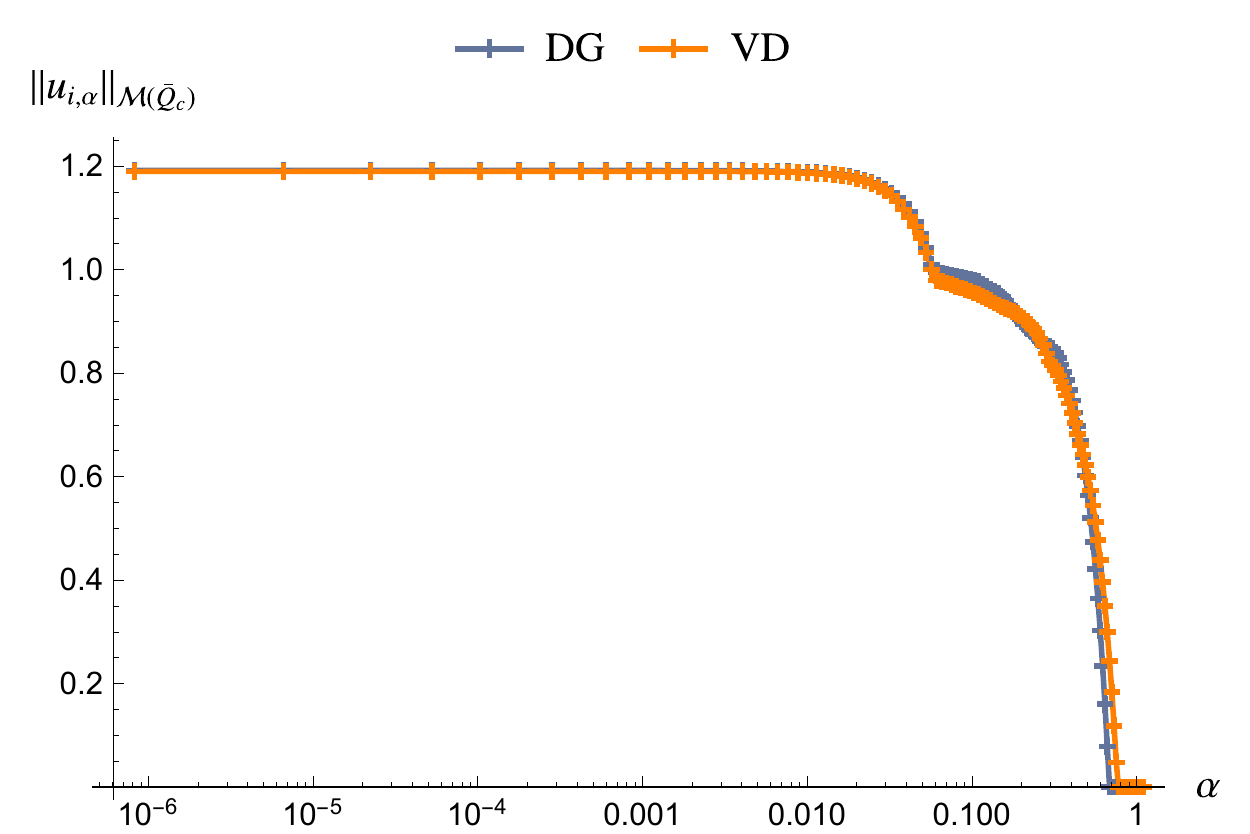}
			\end{minipage}
			\begin{minipage}[c]{0.49\textwidth}
				\includegraphics[width=\textwidth]{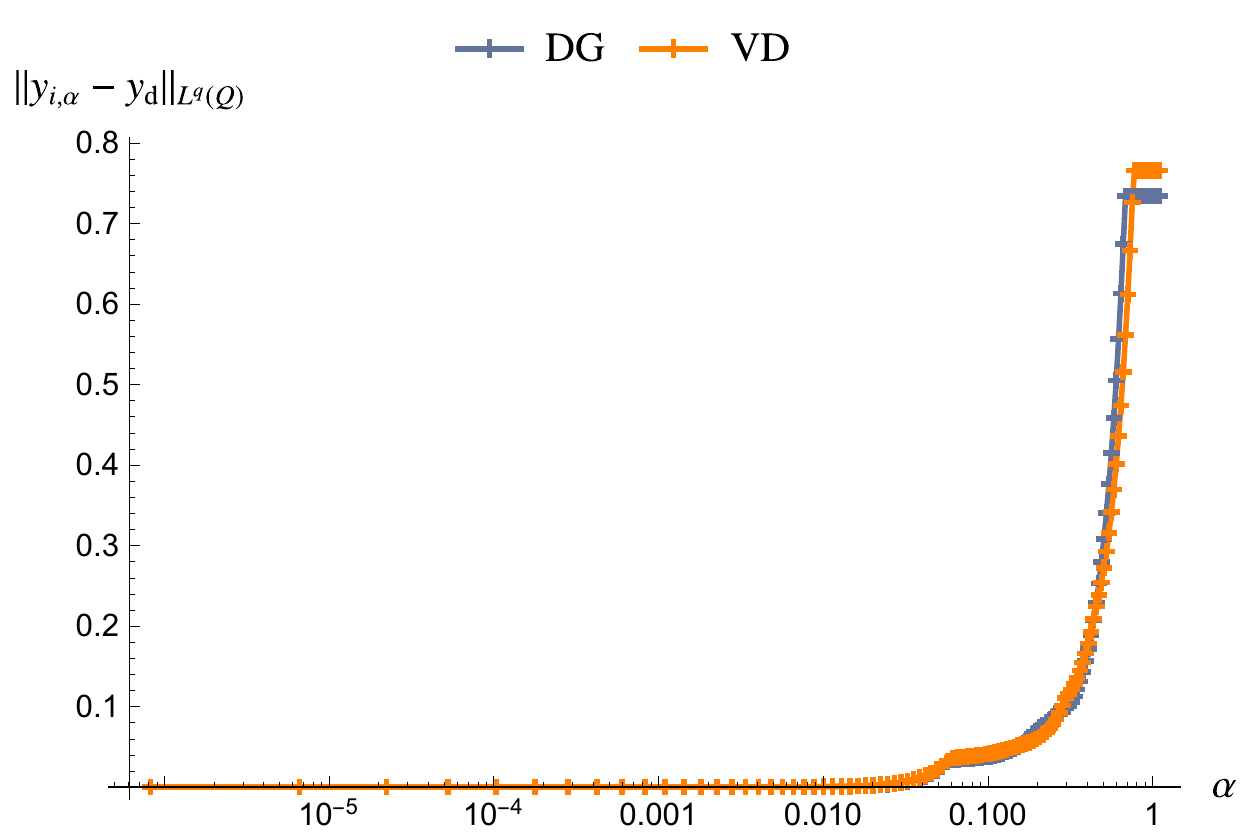}
			\end{minipage}
		\end{center}
		\caption{The dependence on the penalty parameter $\alpha$ of the measure norm of $u_{\sigma,\alpha}$ and $u_{\operatorname{DG},\alpha}$ (left) and the errors $y_{\sigma,\alpha} - y_{\operatorname{d}}$ and $y_{\operatorname{DG},\alpha} - y_{\operatorname{d}}$ in the $L^{4/3}$ norm (right).}
		\label{fig:uNorm_and_yerror}
	\end{figure}

For further comparison of the two discretization strategies, we choose a value of $\alpha$ that leads to a norm of the controls, that is neither zero nor maximal. For $\alpha = 0.456$, the reconstructed controls and states are displayed in Figure \ref{fig:NumericalResults}.
	\textcolor{M1}{
Here we see that the measure norm values $\|u_{\sigma,\alpha}\|_{\cM(\bar{Q}_c)} = 0.6610$ and $\|u_{\operatorname{DG,\alpha}}\|_{\cM(\bar{Q}_c)} = 0.6692$ both differ from the true value $\|u\|_{\cM(\bar{Q}_c)} =1$. Furthermore we observe that $\supp(u_{\sigma,\alpha}) = \{(\tfrac{1}{1},\tfrac{1}{2})\}$ and $\supp(u_{\operatorname{DG},\alpha})= \{\tfrac{1}{2}\}\times (\tfrac{1}{2},\tfrac{5}{8}]$, where the first coincides with the support of $u = \updelta_{(1/2,1/2)}$. 
The control in the discontinuous Galerkin discrete setting can also be represented by a Dirac measure. In order to do so, it has to be multiplied by $\tau$ and the location of the measure in the time interval has to be chosen.}

	
%
%
	%
	%
	%
	\begin{figure}[h]
		\begin{center}
			\setlength{\tabcolsep}{0pt}
			\begin{tabular}{|c|c|c|}
				\hline
				\textbf{setup}
				& \textbf{\eqref{eq:Pvd} with $\alpha = 0.456$}
				& \textbf{\eqref{eq:Psigma} with $\alpha = 0.456$}
				\\
				\hline
				\raisebox{-0.5\height}{	
					\includegraphics[width=\imgwidth]{CoarseGrid3_VD_Nh-256_Nt-768__TrueControl.png}
				}
				&
				\raisebox{-0.5\height}{	
					\includegraphics[width=\imgwidth]{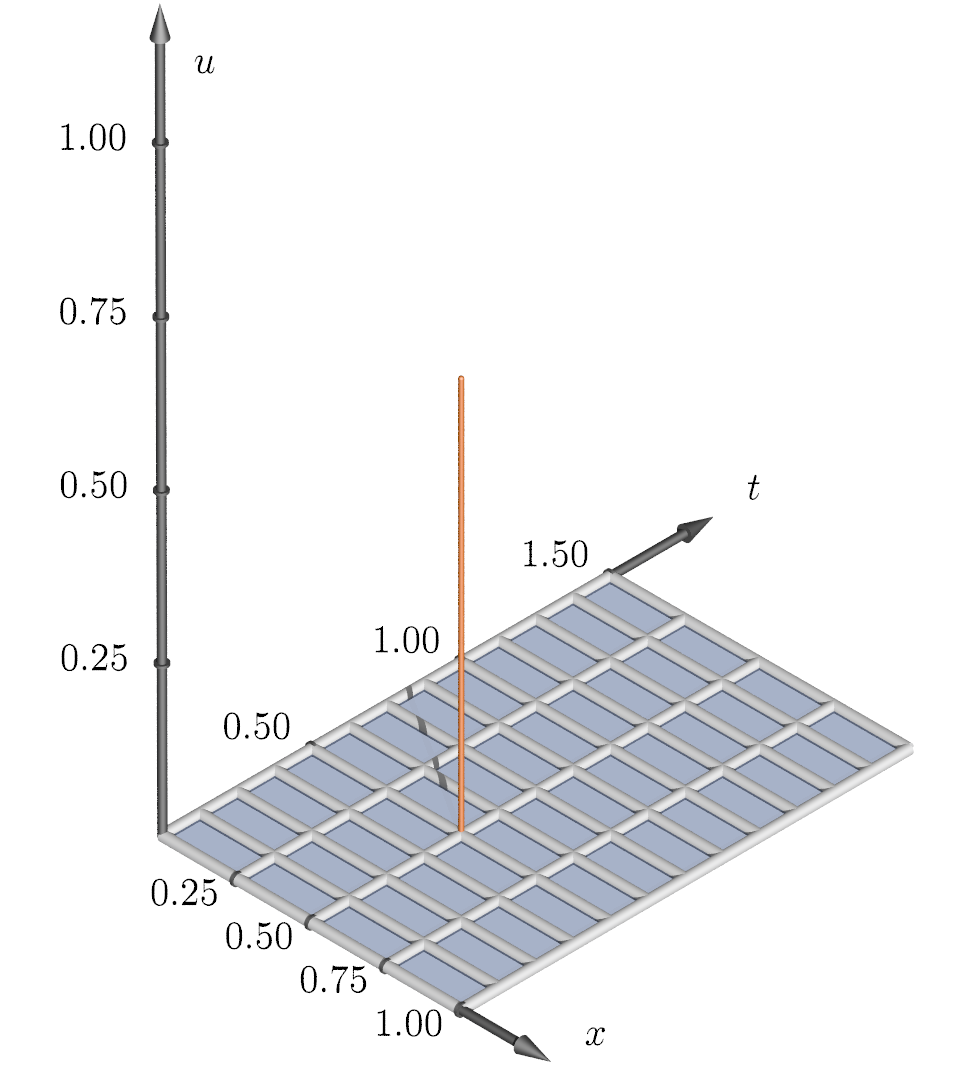}
				}
				&
				\raisebox{-0.5\height}{	
					\includegraphics[width=\imgwidth]{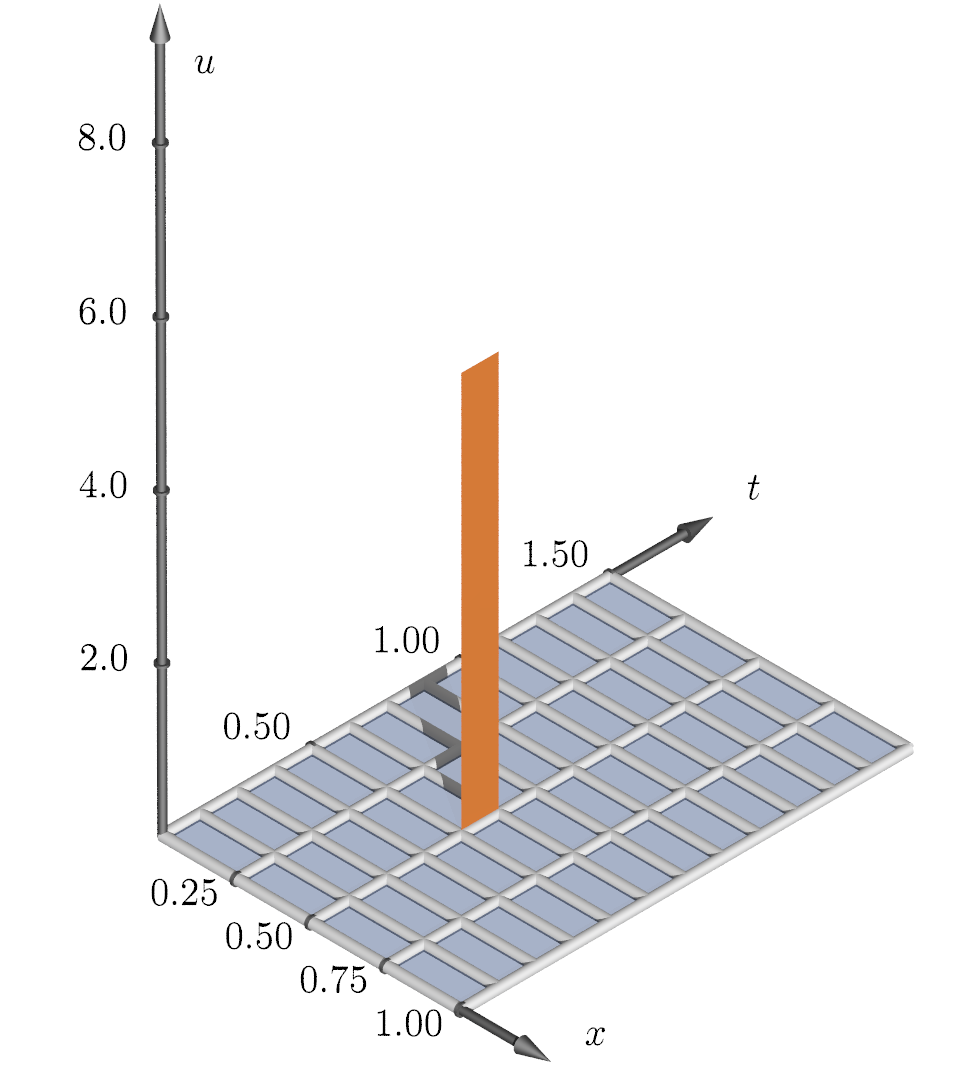}
				}
				\\
				\hline	
				\raisebox{-0.5\height}{
					\includegraphics[width=\imgwidth]{CoarseGrid3_VD_Nh-256_Nt-768__TrueState.png}
				}
				&
				\raisebox{-0.5\height}{	
					\includegraphics[width=\imgwidth]{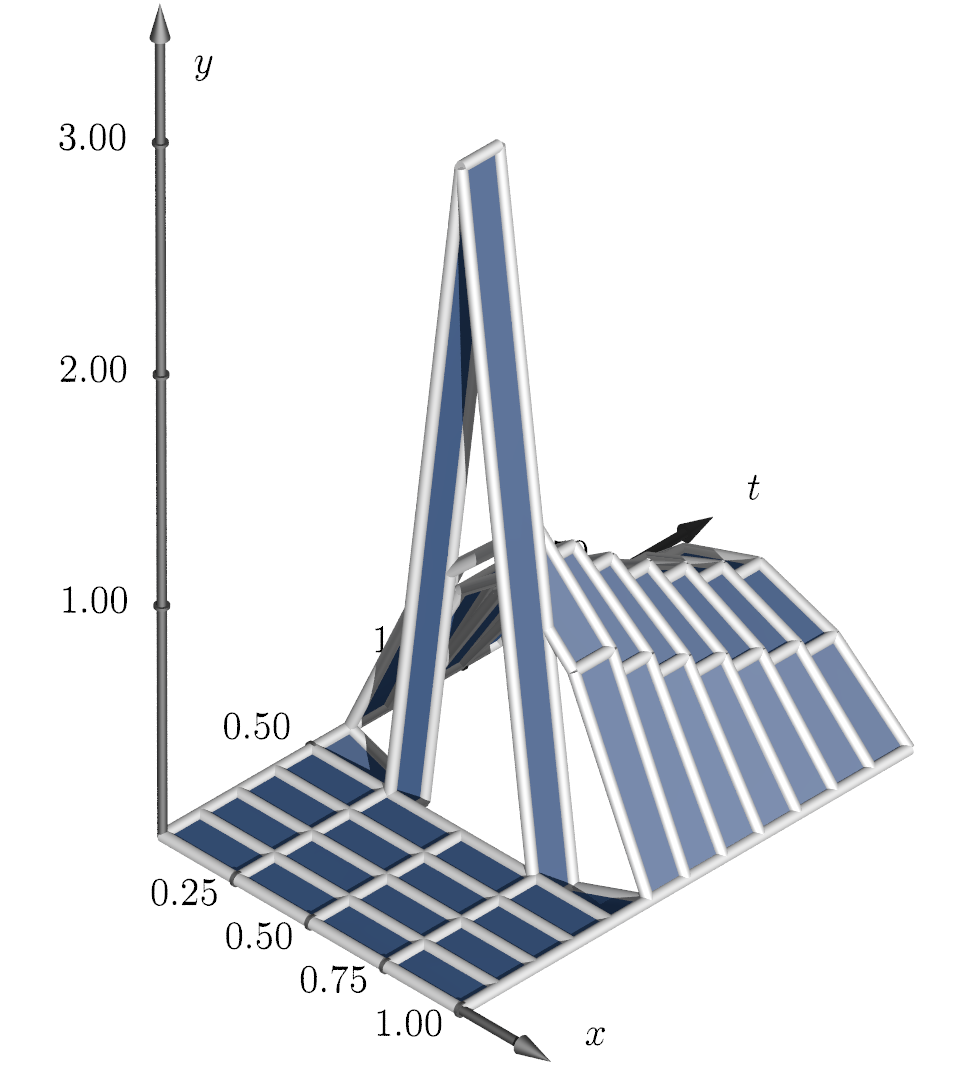}
				}
				&
				\raisebox{-0.5\height}{
					\includegraphics[width=\imgwidth]{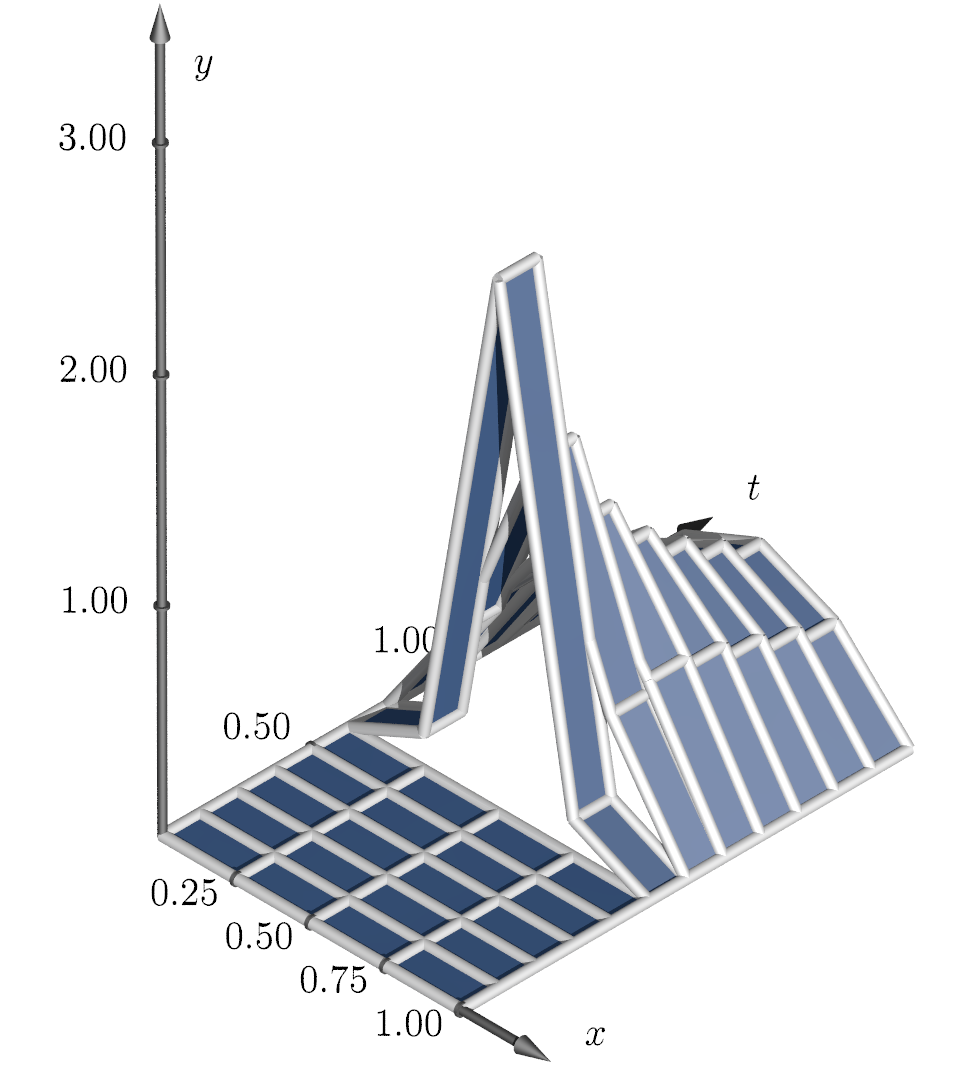}
				}	
				\\
				\hline
			\end{tabular}
		\end{center}
		\caption{
			\textbf{Top row:} The measure control and the optimal controls $u_{\sigma,0.456}$ and $u_{\operatorname{DG},0.456}$.
			\textbf{Bottom row:} The associated state $y(u)$ (sampled  from the analytic solution with spacial Fourier modes) and the associated states $y_{\sigma,0.456}$ and $y_{\operatorname{DG},0.456}$.
		}
		\label{fig:NumericalResults}
	\end{figure}

	If the control $u$ is not located on our space-time grid, it will be impossible to reproduce its support exactly. In the variational discretization approach a remedy might be choosing a test space $\cW_{\sigma}$ consisting of piecewise quadratic -- or even higher order -- functions in time. Thereby the maximal values of the test functions $\pm \alpha$ could be attained not only at grid points, but also inside the time intervals. Determining the location of these maximal values would mean to determine the exact position in time of the potential support of the control. This will be part of further research.
	
\bigskip
			
	\textcolor{C}{Our second numerical example aims at visualizing the convergence properties from \ref{thm:DiscVarConvergence} and \cite[Theorem 4.3.]{CasKun} for $|\sigma| \rightarrow 0$. Utilizing Fenchel duality, we can generate a discrete desired state $\DesiredState$ from a chosen true solution $u_{\operatorname{true}}$ for chosen $\bar{\alpha} >0$. From \eqref{eq:RecoverControl}, we deduce:
	\begin{equation*}
	\DesiredState = L^{- *} \Morrey^* u_{\operatorname{true}} - |L w_{\bar{\alpha}}|^{p-2} \, L w_{\bar{\alpha}}.
	\end{equation*}	}

	We set $u_{\operatorname{true}} = \updelta_{(1/2,1/2)}$ and the associated state $y_{\operatorname{true}} \coloneqq y(u_{\operatorname{true}})$ sampled from the analytic solution with spacial Fourier modes. 
	Furthermore, we take $w_{\bar{\alpha}}$, such that $w_{\bar{\alpha}}(1/2,1/2)= -\bar{\alpha}$ and for all other values $(x,t) \in \bar{Q}_c$ it holds $|w_{\bar{\alpha}}(x,t)| < \bar{\alpha}$. 
	For example, with $\bar{\alpha} = \tfrac{1}{4}$, this is fulfilled by
	\begin{equation*}
	w_{\bar{\alpha}}(x,t) \coloneqq -\tfrac{1}{4} \Bigparen{  \bigparen{ (t -0.5)^2 -1 }^2  \, \bigparen{ (2 x -1)^2 -1  }^2 }.
	\end{equation*}
	In the following we fix $\alpha = \bar{\alpha}$. The setup is visualized exemplary on an equidistant $4 \times 48$-grid in Figure \ref{fig:Generateyd}.

	\begin{figure}[t]
		\begin{center}
			\setlength{\tabcolsep}{0pt}
			\begin{tabular}{|c|c|c|c|}
				\hline
				\textbf{true control}
				& \textbf{associated state}
				& \textbf{adjoint state}
				& \textbf{desired state}
				\\
				\hline
				\raisebox{-0.5\height}{
					\includegraphics[width=0.235\textwidth]{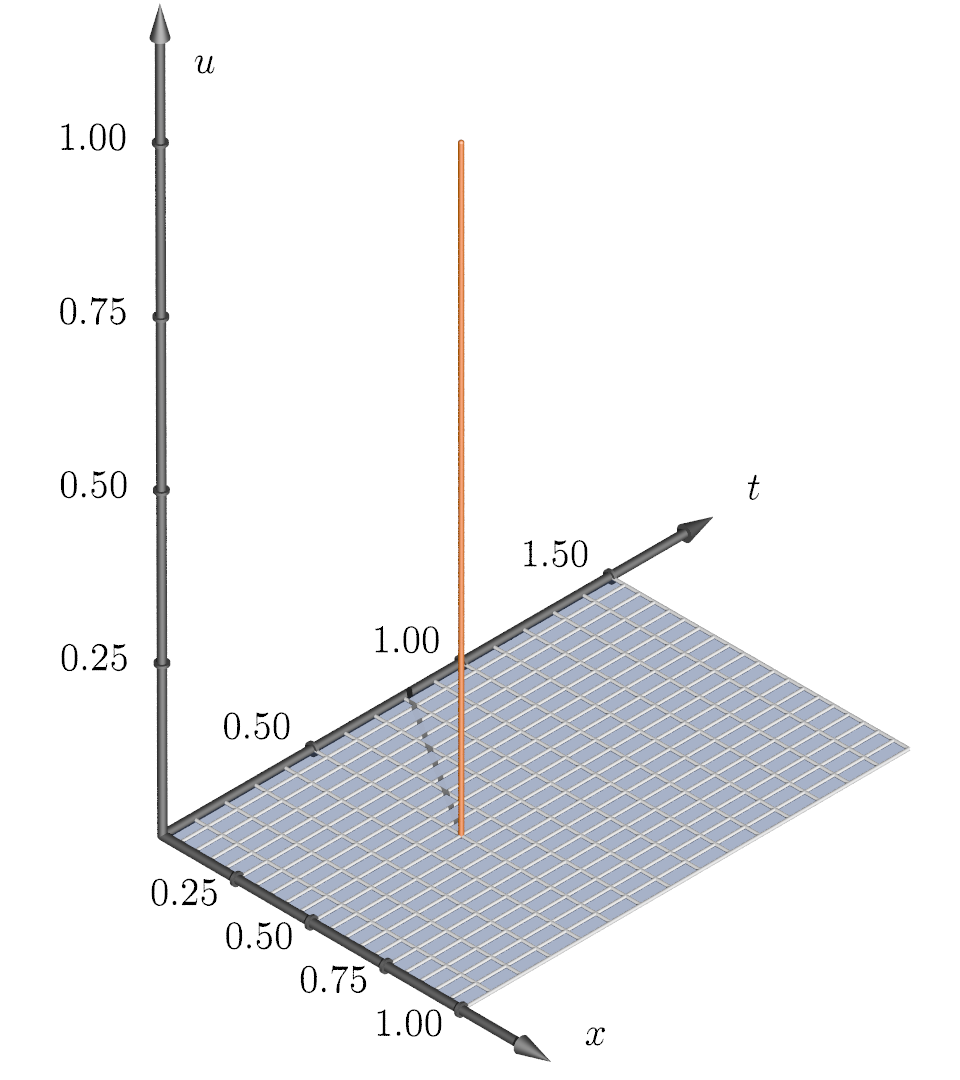}
				}
				&\raisebox{-0.5\height}{	
					\includegraphics[width=0.235\textwidth]{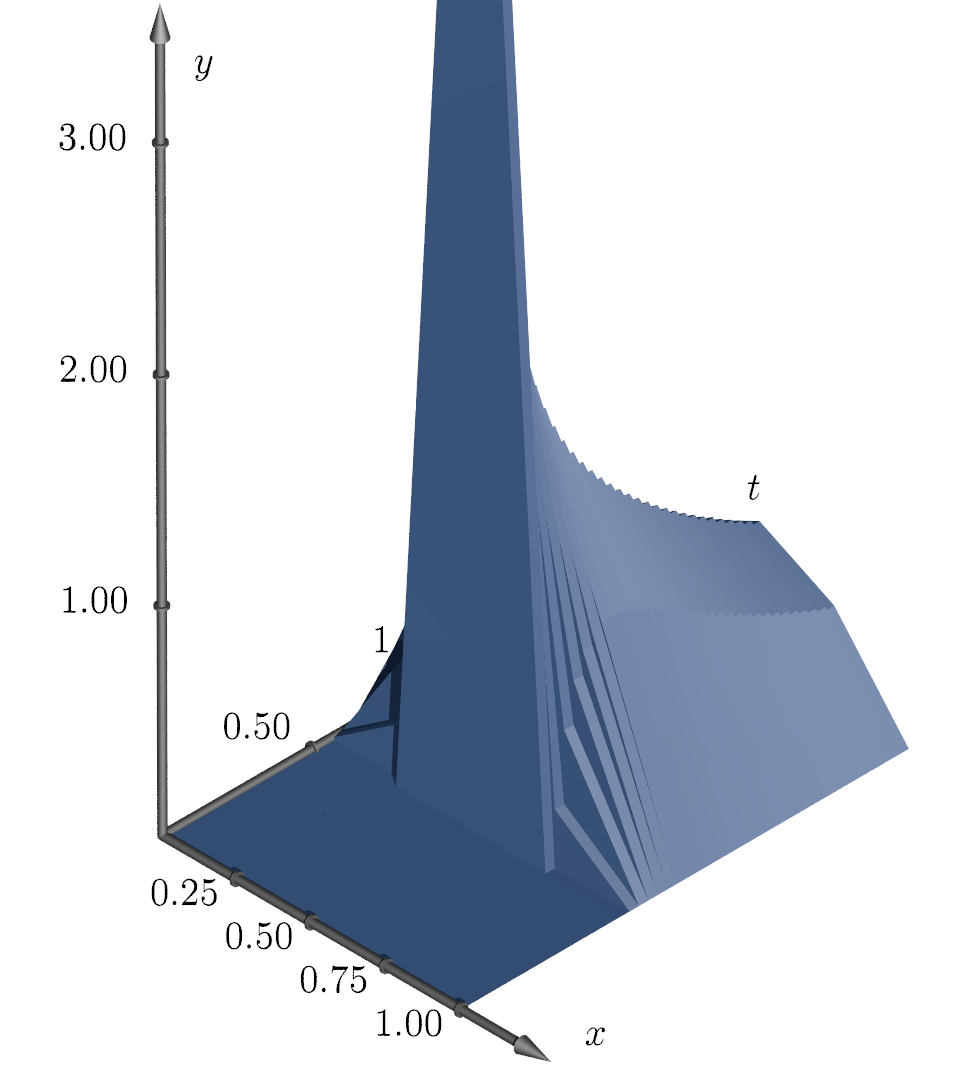}
				}
				&\raisebox{-0.5\height}{	
					\includegraphics[width=0.235\textwidth]{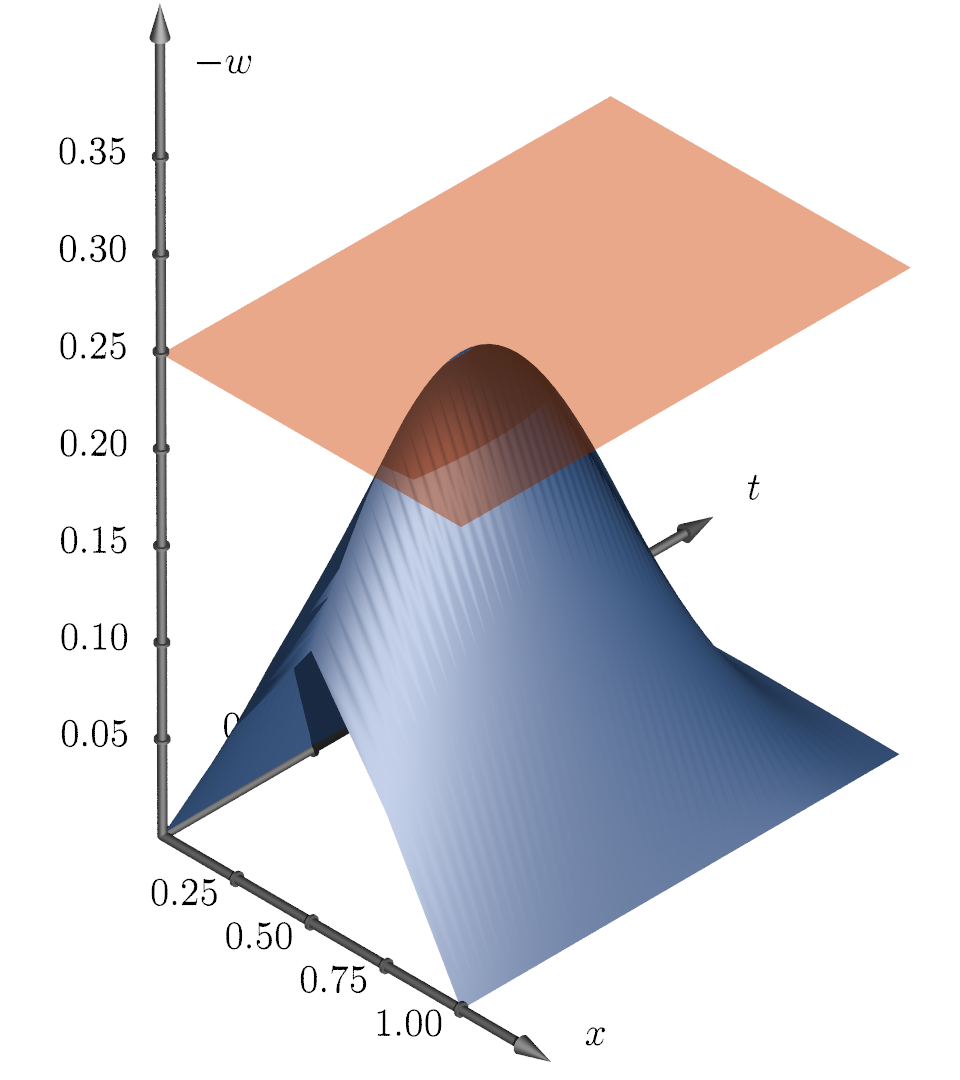}
				}
				&\raisebox{-0.5\height}{	
					\includegraphics[width=0.235\textwidth]{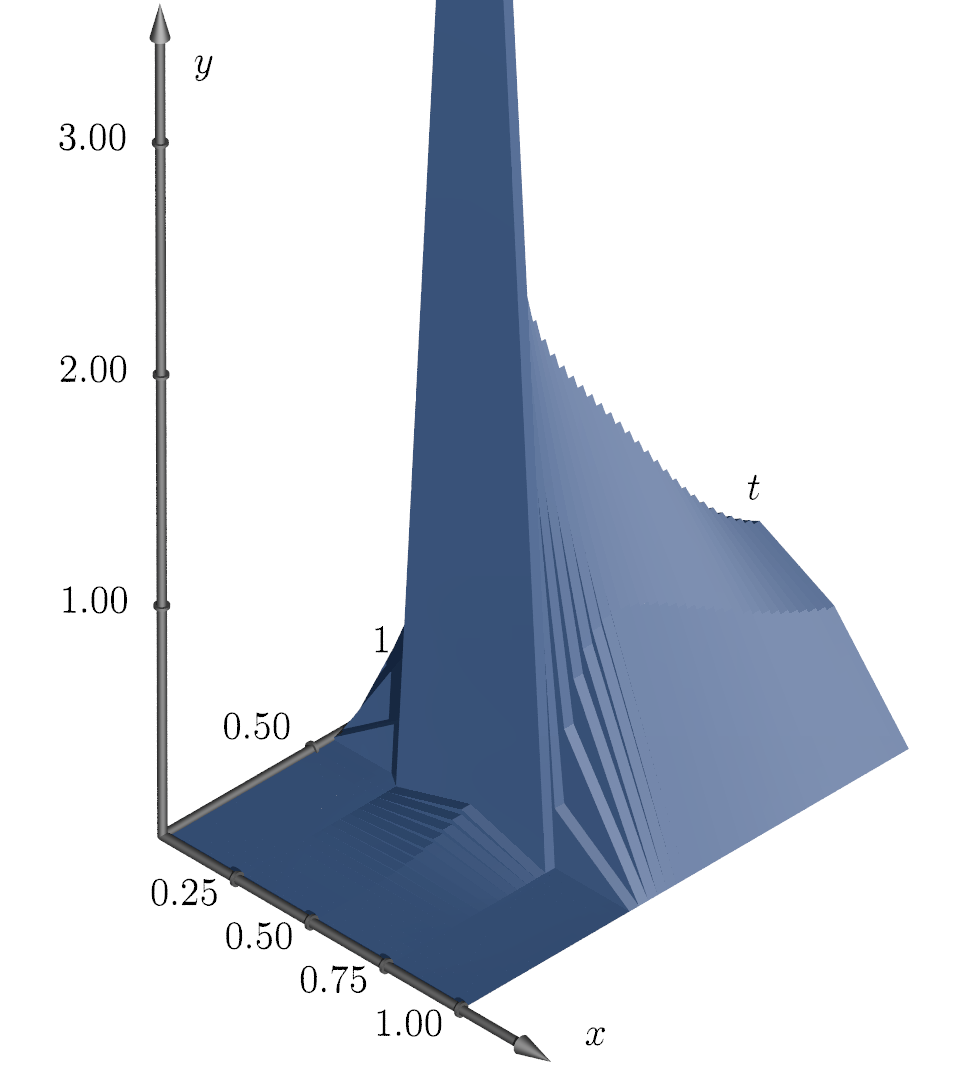}
				}
				\\
				\hline	
			\end{tabular}
		\end{center}
		\caption{Numerical setup on a $4 \times 48$ space-time grid with $q = \tfrac{4}{3}$. From left to right: true control $u_{\operatorname{true}} = \updelta_{(1/2,1/2)}$, interpolation of the associated state $y(u_{\operatorname{true}})$, adjoint state $w_{\bar{\alpha}}$ multiplied by $-1$ for easier visualization and desired state $\DesiredState$ calculated using Fenchel duality. 
		}
		\label{fig:Generateyd}
	\end{figure}
	
	For both discretization strategies, i.e. $i \in \{\sigma,\operatorname{DG}\}$, we have the following convergence properties:
	\begin{equation*}
	\lim\limits_{|\sigma|\rightarrow 0} \|u_{i}\|_{\cM(\bar{Q}_c)} = \|u_{\operatorname{true}}\|_{\cM(\bar{Q}_c)} \qquad \textrm{and} \qquad \lim\limits_{|\sigma|\rightarrow 0} \|y_{i} - y_{\operatorname{true}}\|_{L^q(Q)} = 0.
	\end{equation*}
	In \ref{fig:convergence} we $\log$-$\log$-plot the errors $|\, \|u_i\|_{\cM(\bar{Q}_c)} - \|u_{\operatorname{true}}\|_{\cM(\bar{Q}_c)}  |$ and $\| y_i - y_{\operatorname{true}} \|_{L^q(Q)}$, $i \in \{\sigma, \operatorname{DG}\}$
	versus the gridsize $h$.

	
		\begin{figure}[t]
		\begin{center}
			\setlength{\tabcolsep}{0pt}
			\begin{tabular}{|c|c|}
				\hline
				\textbf{$\tau = \tfrac{h}{2}$}
				& \textbf{$\tau = \tfrac{h^2}{2}$}
				\\
				\hline
				\raisebox{-0.1\height}{
					\includegraphics[width=0.4\textwidth]{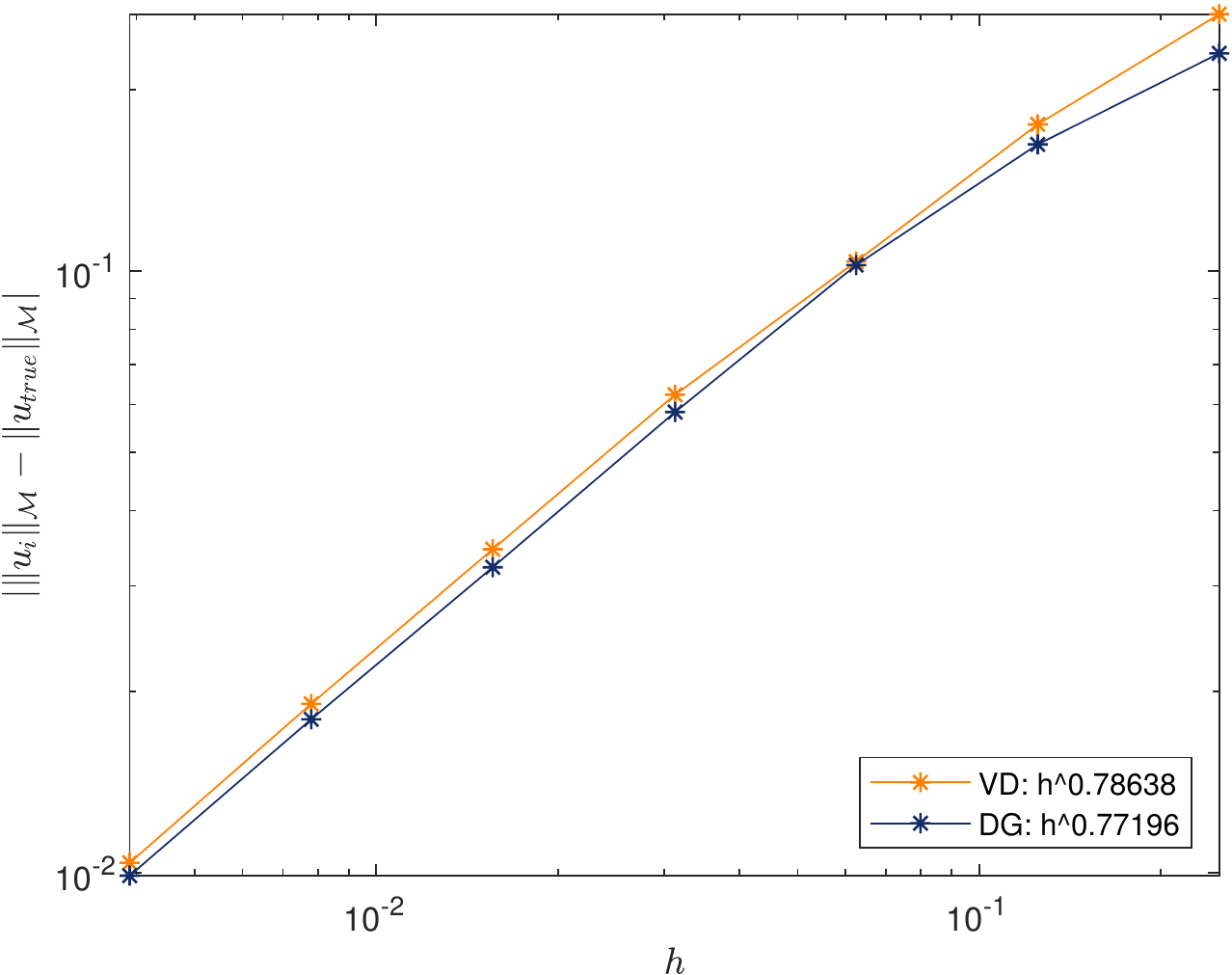}
				}
				&\raisebox{-0.1\height}{	
					\includegraphics[width=0.4\textwidth]{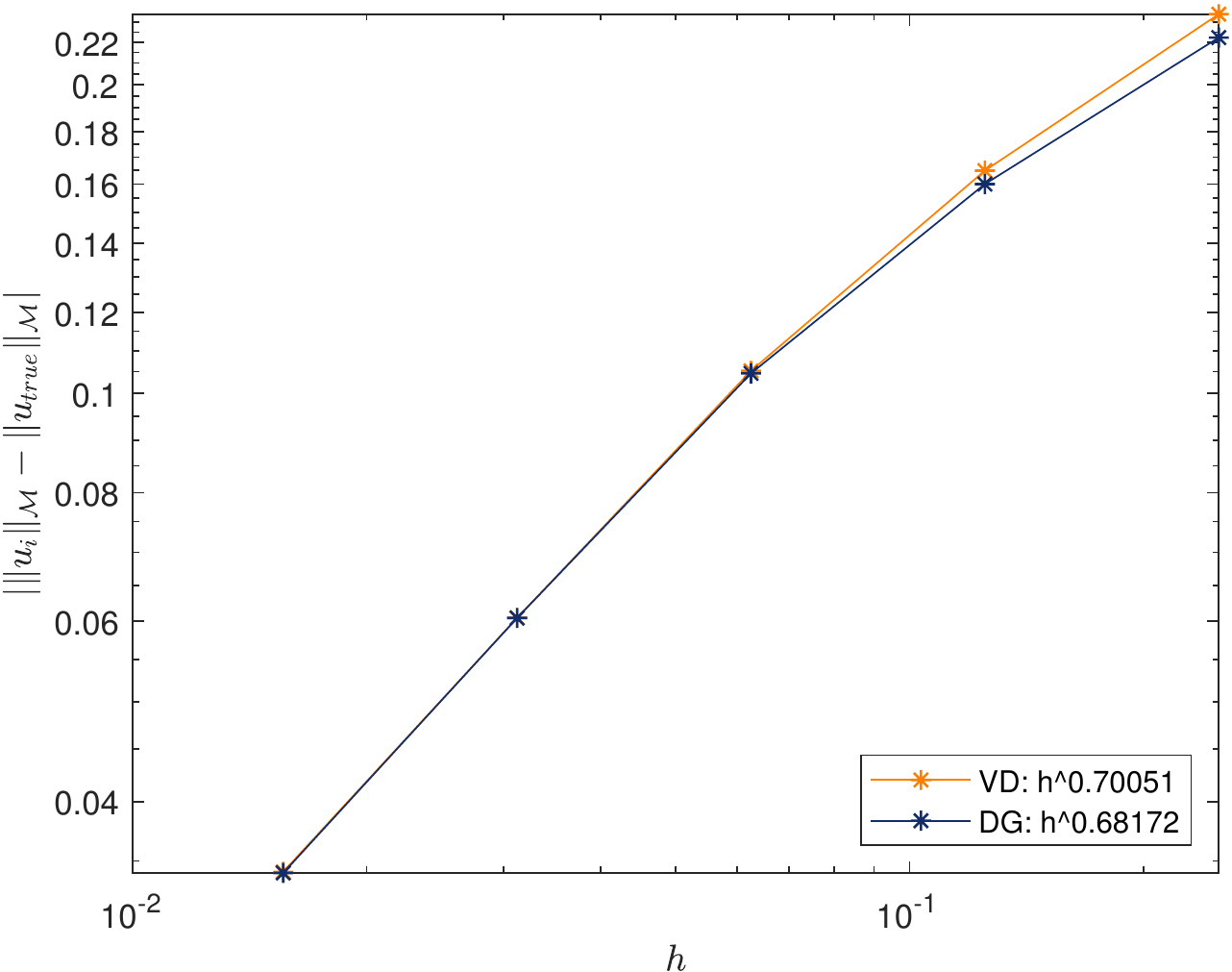}
				}
				\\[0.3cm]
				\hline
				\raisebox{-0.1\height}{ 
					\includegraphics[width=0.4\textwidth]{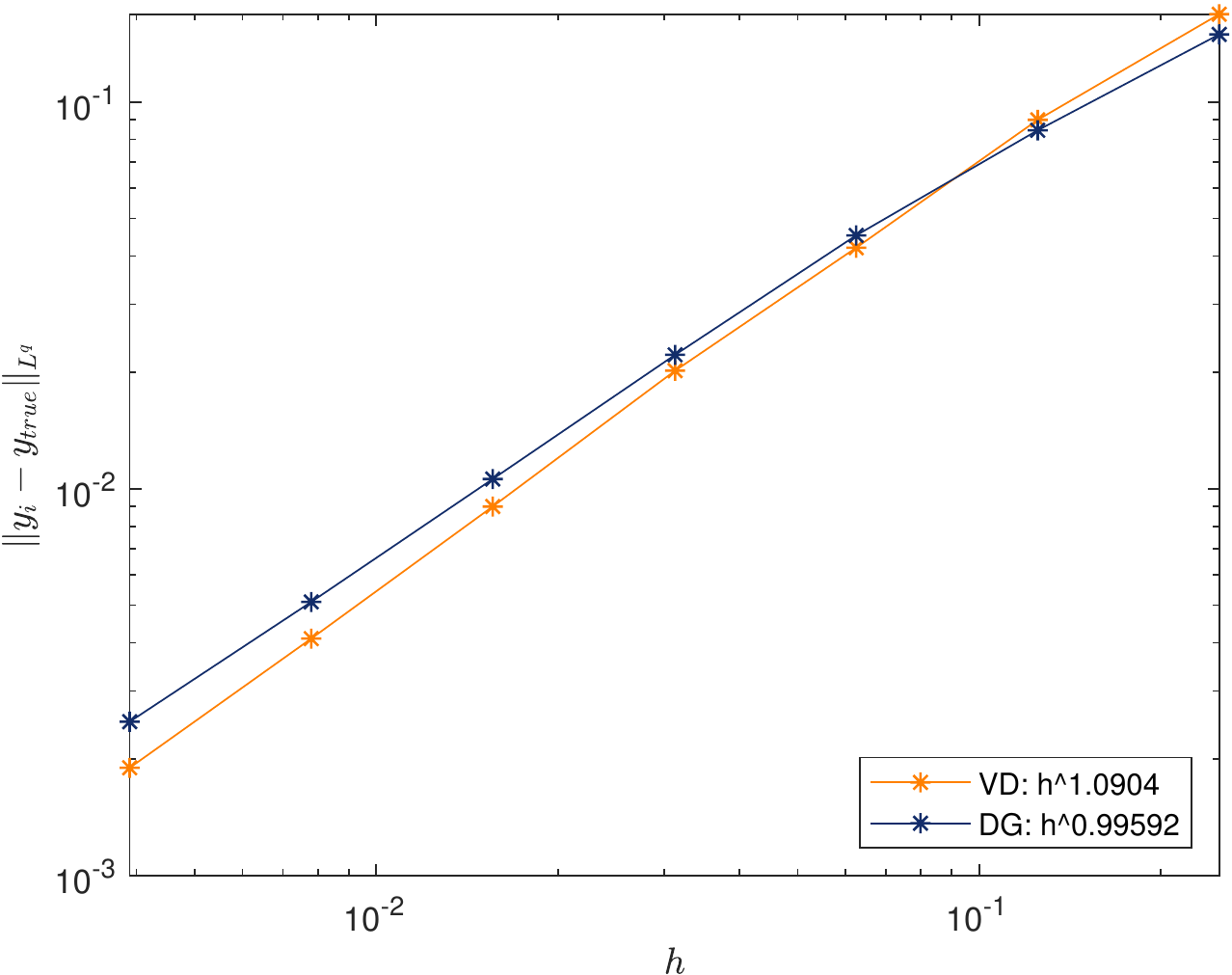} 
				}
				&\raisebox{-0.1\height}{
					\includegraphics[width=0.4\textwidth]{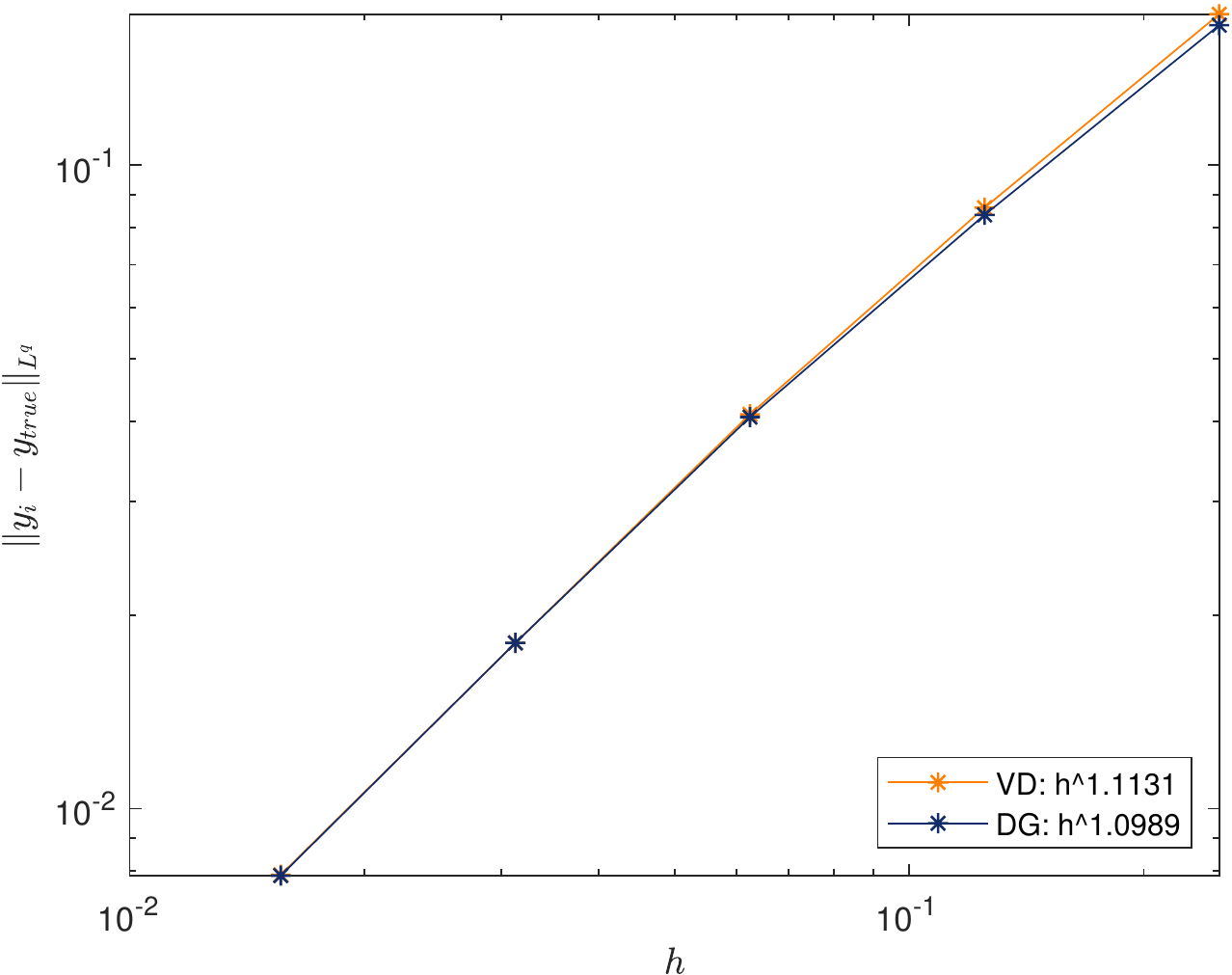} 
				}
				\\[0.3cm]
				\hline	
			\end{tabular}
		\end{center}
		\caption{\textbf{Top row:} The difference of the measure norms of the true control $u_{\operatorname{true}}$ and calculated optimal control $u_i$, $i \in \{\sigma, \operatorname{DG}\}$ for $\tau = \tfrac{h}{2}$ (left) and $\tau= \tfrac{h^2}{2}$ (right). \textbf{Bottom row:} The $L^{4/3}$ norm of the difference of the associated states $y_i - y_{\operatorname{true}}$, $i \in \{\sigma, \operatorname{DG}\}$ for $\tau = \tfrac{h}{2}$ (left) and $\tau= \tfrac{h^2}{2}$ (right).
		}
		\label{fig:convergence}
	\end{figure}
	
	The proven convergence properties can be observed, as the errors go to zero for $h \rightarrow 0$, which is equivalent to $|\sigma|\rightarrow 0$ since $\tau$ is always linked to $h$. \textcolor{M1}{It is interesting to mention that in the variational discrete setting, oscillations in the state $y_{\sigma}$ occur for small gridsizes, where $\tau = \tfrac{h}{2}$. This is caused by the Crank-Nicolson-like scheme. Nevertheless, we see convergence also in this case.}
	In the legend of the plots we display the slope in orders of $h$ of linear fitting curves of the errors, respectively. Proving general convergence rates will be part of further research.
	
	\bigskip
	We observe many similarities in the derivation of the algorithms to solve the discrete problems. 
	The implementation and the level of difficulty in programming is comparable for both approaches and we also observe similar iteration counts. 
	The main advantage of the variational discretization compared to the discontinuous Galerkin discretization is the maximal discrete sparsity of the control achieved by choosing a suitable Petrov-Galerkin-ansatz and -test space.
	
	\bigskip 
	\textbf{Acknowledgment.} We thank the referees for their helpful remarks that improved the manuscript.

%


\printbibliography

\end{document}